\newcommand{\wt}{\theta}
\newcommand{\lk}{\left}
\newcommand{\HH}{H} 
\newcommand{\mZ}{\mathbb{Z}} 
\newcommand{\TT}{T} 
\newcommand{\HC}{\mathcal{H}} 
\newcommand{\ZZ}{\CH}
\newcommand{\uu}{}
\newcommand{\lqq}{\lefteqn}
\newcommand{\rk}{\right}
\newcommand{\la}{{\langle}}
\newcommand{\tim}{{[0,\infty)}}
\newcommand{\rrint} {[0,\infty)}
\newcommand{\epsi} {{\epsilon,\sigma}}
\newcommand{\cadlag}{c{\'a}dl{\'a}g}
\newcommand{\ww}{{{R}}}
\newcommand{\uc}{u^{c}}
\newcommand{\neu}{{\gamma_1} }
\newcommand{\nb}{{\gamma_2} }
\newcommand{\barray}{\begin{array}{rcl}}
\newcommand{\earray}{\end{array}}
\newcommand{\Aand}{\mbox{ and }}
\newcommand{\pmat}{\begin{pmatrix}}
\newcommand{\epmat}{\end{pmatrix}}
\newcommand{\DEQS}{\begin{eqnarray*}}
\newcommand{\EEQS}{\end{eqnarray*}}
\newcommand{\DEQSZ}{\begin{eqnarray}}
\newcommand{\EEQSZ}{\end{eqnarray}}
\newcommand{\bcase}{\begin{cases}}
\newcommand{\ecase}{\end{cases}}
\newcommand{\ep}{{{ \epsilon }}}
\newcommand{\CO}{{{ \mathcal O }}}
\newcommand{\CX}{{{ \mathcal X }}}
\newcommand{\CK}{{{ \mathcal K }}}
\newcommand{\CD}{{{ \mathcal D }}}
\newcommand{\FA}{{{ \mathfrak{A}}}}
\newcommand{\CA}{{{ \mathcal A }}}
\newcommand{\ra}{{\rangle}}
\newcommand{\CI}{{{ \mathcal I }}}
\newcommand{\FB}{{{ \mathbb{F} }}}
\newcommand{\FF}{{{ \mathbb{F} }}}
\newcommand{\CZ}{{{ \mathcal Z }}}
\newcommand{\CC}{{{ \mathcal C }}}
\newcommand{\CH}{{{ \RR }}} 
\newcommand{\CCH}{{{ H }}}
\newcommand{\CS}{{{ \mathcal S }}}
\newcommand{\CG}{{{ \mathcal G }}}
\newcommand{\CB}{{{ \mathcal B }}}
\newcommand{\CR}{{{ \mathcal D }}}
\newcommand{\CM}{{{ \mathcal M }}}
\newcommand{\CP}{{{ \mathcal P }}}
\newcommand{\CF}{{{ \mathcal F }}}
\newcommand{\CL}{{{ \mathcal L }}}
\newcommand{\RR}{{\mathbb{R}}}
\newcommand{\QQ}{{\mathbb{Q}}}
\newcommand{\dd}{{\rm I \kern -0.2em D}}
\newcommand{\NN}{{\mathbb{N}}}
\newcommand{\PP}{{\mathbb{P}}}
\def\Re{\mathop{ \rm Re }}
\newcommand\sgn{\mathop{ \rm sgn }}
\newcommand{\EE}{ \mathbb{E} }
\newcommand{\vardelta}{ \rho}
\newcommand{\hh}{\rho}
\newcommand\del[1]{}
\newcommand\think[1]{}
\newcommand\new[1]{}
\newcommand\zus[1]{}
\newcommand\com[1]{\OliveGreen{#1}}
\newcommand\comd[1]{} 
\newcommand\Redd[1]{} 
\newcommand\coma[1]{\Blue{#1}}
\newcommand\red[1]{\Red{#1}}
\newtheorem{theorem}{Theorem}[section]
\newtheorem{definition}[theorem]{Definition}
\newtheorem{lemma}[theorem]{Lemma}
\newtheorem{coro}[theorem]{Corollary}
\newtheorem{example}[theorem]{Example}
\newtheorem{Notation}{Notation}
\newtheorem{hypo}{Hypothesis}
\newtheorem{remark}{Remark}
\begin{document}

\date{\today}

\title[\today ]{Controllability  and Qualitative properties of the solutions to SPDEs driven
by boundary L\'evy noise}

\author[EH \today]{
Erika Hausenblas,  Paul Andr\'e Razafimandimby }

\email[E. Hausenblas]{erika.hausenblas@unileoben.ac.at}
 \email[P. A. Razafimandimby]{paulrazafi@gmail.com}
\address[E. Hausenblas and P. A. Razafimandimby]{Department of Mathematics and Information Technology, Montanuniversity Leoben,
Fr. Josefstr. 18, 8700 Leoben, Austria}

\maketitle

\keywords{SPDEs, Poisson Random measures, support theorem,
invariant measure, Asymptotically Strong Feller Property}

\subjclass{60H07, 60H10, 60H15, 60J75}

\begin{abstract}
 Let $u$ be the solution to the following stochastic
evolution equation
\DEQSZ\label{erste}\lk\{\barray du(t,x)& = &A u(t,x)\: dt +
B\;\sigma(u(t,x)) \, dL(t),\quad t>0;
\\ u(0,x)& =&x\earray\rk.
\EEQSZ taking values in an Hilbert space $\HH$, where $L$ is a
$\RR$ valued L\'evy process, $A:H\to H$ an infinitesimal generator
of a strongly continuous semigroup, $\sigma:H\to \RR$ bounded from
below and Lipschitz continuous, and $B:\RR\to H$ a possible
unbounded operator. A typical example of such an equation is a
stochastic Partial differential equation with boundary L\'evy noise. Let
$\CP=(\CP_t)_{t\ge 0}$
 the corresponding Markovian semigroup.

We show that, if the system \DEQSZ\label{erste-cont}\lk\{\barray
du(t)& = &A u(t)\: dt + B\;v(t),\quad t>0;
\\ u(0)& =&x\earray\rk.
\EEQSZ is approximate controllable in time $T>0$, then under some additional conditions on $B$ and $A$,
for any $x\in H$ the probability measure $\CP_T^\star \delta_x$ is
positive on open sets of $H$. Secondly, as an application, we investigate under which condition on 
the L\'evy process $L$ and on the operator $A$ and $B$ the
solution of Equation \eqref{erste}  is asymptotically strong Feller, respective, has a unique invariant measure.
 We apply  these results to
the damped wave equation driven by L\'evy boundary noise.
\del{  and show
that
 the corresponding Markovian semigroup admits a
unique invariant measure.}
\del{, i.e. for any $t>0$ and $\phi\in C ^1(\HH)$ there we
have
\DEQS \la D P_t \phi (x),a\ra \le C_1 t ^{1-p} |\phi|+
C_2|\phi'|, \quad x\in \HH, a\in \HH. \EEQS}
\end{abstract}

\section{Introduction}

{
To present the aim of this paper,  let $\HH$ 
be 
a  Hilbert space. Let $u$ be the unique solution
of the infinite dimensional system with Poissonian noise, formally
written as
\DEQSZ\label{erste1}
\lk\{ \begin{array}{rcl} du(t,x) &=& Au(t,x) \: dt +\int_\CH {B}\, \sigma(u(t))\, z\; \tilde \eta (dz,dt),\quad t>0,\\
u(0,x)&=&x.
\end{array}\rk.
\EEQSZ
In this equation, $A:\HH\to\HH$ is a linear operator generating a strongly continuous semigroup on $\HH$,
 $B:\CH \to\HH$
 is a certain mappings specified later, $\sigma:H\to \RR$ bounded from
below and Lipschitz continuous, and
$\eta:\CB(\CH )\times \CB(\RR ^ +)\to \NN_0\cup\{\infty\}$ is a compensated Poisson random measure over a 
probability space $\mathfrak{A}=(\Omega,\CF,(\CF_t)_{t\ge 0},\PP)$ and intensity measure $\nu$.
Let $\CP=(\CP_t)_{t\ge 0}$ 
be the Markovian semigroup induced on $\HH$, i.e.
\DEQSZ\label{estimate} \CP_t \phi(x) := \EE \phi(u(t,x)),\quad
x\in\HH,\quad  t>0,\quad  \phi\in C(\HH). \EEQSZ } A typical
example of such an equation is a stochastic partial differential
equation with boundary noise.
The aim of this paper is to verify  under which conditions on $A$,
$B$ and $\eta$ the Markovian semigroup generated by the solution
of \eqref{erste1} is irreducible and admits a unique invariant
measure.

Regularity properties of the Markovian semigroups of stochastic
processes play an important role in studying the long time
behavior of the process. Concerning the uniqueness of the
invariant measure of SPDEs driven by L\'evy processes some results
exist. One of the first results in this direction were established
in the articles of Chojnowska-Michalik \cite{MR798303,chon}. Next,
Fournier \cite{MR1751168} investigated SPDEs driven by space time
Poissonian noise. \del{R\"ockner and Wang considered in
\cite{MR1996872} the approached by the generalised Mehler
semigroup and investigated also question of similar kind, but
since we are looking for SPDEs with purely jump processes, the
results do not coincides.} Applebaum analysed in \cite{05126807}
the analytic property of the generalised Mehler semigroup induced
by L\'evy noise and in \cite{app} the self-decomposability of a
L\'evy noise in Hilbert space. Further works are the two articles
of Priola and Zabczyk \cite{priola-1,priola-2}. We also refer to
\cite{Szarek}, \cite{Priola-4}, \cite{Priola-5} for some recent
results and review of progress for the study of the ergodicity of
the Markovian semigroup associated to the solution of a L\'evy
driven SPDEs. {The proofs of the results in \cite{priola-2,
Priola-4, Priola-5} rely  on the cylindrical and
$\alpha$-stability of the noise, hence their approach does not
cover the case we are treating in this paper. { In the present
work we show that if the system \eqref{erste-cont} is null
controllable, then the Markovian semigroup of solutions to
\eqref{erste} is irreducible. We applied our result to stochastic
evolution equation with L\'evy noise boundary conditions. For
results related to SPDEs with white-noise boundary condition we
refer to \cite{DZ92},\cite{Maslowski}, \cite{ZBrzezniak+SPeszat}.
For stochastic evolution equation driven by Wiener noise a similar
result was established long ago. Indeed the Markovian semigroup of
an Ornstein-Uhlenbeck is irreducible and strong Feller if
\eqref{erste-cont} is null controllable. For this result we refer
to the books of Da Prato and Zabczyk \cite{DZ92} and \cite{DZ97}
and references therein. We also note that in the present paper we
prove the uniqueness of invariant measure for the Markovian
semigroup of solution to \eqref{erste} if a certain notion of null
controllability is satisfied by \eqref{erste-cont}. In fact if
\eqref{erste-cont} is null controllable with vanishing energy (see
Section \ref{sec:3} for the definition), then we are able to show
that the Markovian semigroup of \eqref{erste} satisfies the
asymptotic strong Feller property. The irreducibility and the
asymptotic strong Feller property which is introduced by Hairer
and Mattingly in \cite{mat-hai-2006} will imply the uniqueness of
invariant measure. For SPDEs driven by L\'evy noise it is proved
in \cite{MR1996872} that the null controllability implies the
strong Feller property of the solution to Ornstein-Uhlenbeck
system driven by L\'evy noise with non-zero Gaussian part (see
\cite[Corollary 1.2]{MR1996872}). Unfortunately the result in
\cite{MR1996872} tells us nothing about the property of the
Markovian semigroup when we consider an Ornstein-Uhlenbeck driven
by pure jump noise. Hence our work is an extension of the results
in \cite{DZ92}, \cite{DZ97} and \cite{MR1996872}, in the sense
that we can prove uniqueness of invariant measure for SPDEs driven
by multiplicative and pure jump noise.}

\del{ Picard \cite{MR1402654,MR1449834} obtained a criterion for
existence and regularity of the density without using the Girsanov
transformation based on a duality formula.}
\par
\del{E.g.\ if the semigroup is strong Feller, then one
characterize the asymptotic behaviour of the semigroup, in
particular, one can show that if there exists a invariant measure,
this measure is unique.} \del{If the Markovian semigroup is strong
Feller and satisfies an irreducibility property, then it admits a
unique invariant measure. {To relax these conditions Hairer and
Mattingly introduced in \cite{mat-hai-2006} the so called
asymptotic strong Feller property.} In particular, they proved
that for the uniqueness of the invariant measure it is sufficient
the existence of the invariant measure, some nondengeneracy
property
 and that the Markovian semigroup is asymptotically strong
 Feller.}

\par
{The structure of the paper is the following. In Section
\ref{sec:2}  we give the hypotheses used throughout the paper and
prove an important relation between the irreducibility property
and approximate null controllability. Roughly speaking we could
prove in Section \ref{sec:2} that any ball centered at  the origin
( resp., at any $x \in H$) has positive measure if
\eqref{erste-cont} is approximate (resp., exactly) null
controllable. Section \ref{sec:3} is devoted to the proof of the
uniqueness of the invariant measure of the Markovian semigroup
associated to the solution of \eqref{erste1}. In fact, we
established that the Markovian semigroup satisfies the asymptotic
strong Feller property if \eqref{erste-cont} is null controllable
with vanishing energy. The asymptotic strong Feller and the
irreducibility of the aforementioned semigroup implies the
uniqueness of the invariant measure. We apply our results in
Section \ref{sec:4} to a damped wave equations driven by boundary
L\'evy noise. The last part of the paper is some appendices
collecting some technical results about the change of measure.}
The proofs of our results are a combination of the change of
measure formula given by Bismuth, Graveraux and Jacod
\cite{MR1008471}
  and Sato \cite{sato-book} (see also \cite{abso}) and the method used by Maslowski and Seidler \cite{bohdan}. 

\begin{Notation}
Let $\RR^+:=(0,\infty)$, $\RR^+_0:=(0,\infty)$,
$\NN_0:=\NN\cup\{0\}$ and $\bar\NN := \NN_0\cup\{\infty\}$. Let
$(Z,\CZ)$ be a measurable space. By $M_+( Z)$ we denote the family
of all positive measures on $Z$, by $\CM_+(Z)$ we denote the
$\sigma$-field on $M_+(Z)$ generated by functions $i_B:M_+(Z)
\ni\mu \mapsto \mu(B)\in \RR_+$, $B\in \CZ$. By $M_I( Z)$ we
denote the family of all $\sigma$--finite integer valued measures
on $Z$, by $\CM_I(Z)$ we denote the $\sigma$-field on $M_I(Z)$
generated by functions $i_B:M_I(Z) \ni\mu \mapsto \mu(B)\in
\bar\NN$, $B\in \CZ$. By $M_\sigma ^ +(Z)$ we denote the set of
all $\sigma$--finite and positive measures on $Z$,
 by $\CM ^ +_\sigma (Z)$ we denote the $\sigma$-field on $M ^ +_\sigma(Z)$
generated by functions $i_B:M ^ +_\sigma(Z) \ni\mu \mapsto
\mu(B)\in \RR$, $B\in \CZ$. We denote by $B(Z)$ the set of all
Borel measurable, real-valued, bounded functions.

For a Hilbert space $H$, by
  $C_b(H)$ the space of all uniformly continuous and bounded mappings $\phi:H\to \RR$ endowed with the norm $|\phi|_\infty= \sup_{x\in H} |\phi(x)|$.
\del{  and by $C^{(1)}_b(H)$ the set of all differentiable
functions with uniformly bounded derivative.}
\del{For any Banach space $Y$ and number $q\in[1,\infty)$, we
denote by $\mathcal{N}(\RR_+;Y)$
 the space (of equivalence classes) of
{progressively-measurable} processes $\xi : \rrint\times\Omega\to Y$ and by 
$\mathcal{M}^q ( \rrint;Y)$ the Banach space consisting of those
$\xi \in \mathcal{N}( \rrint;Y)$ for which $
\mathbb{E}\int_0^\infty \vert\xi (t)\vert^q_Y\,dt<\infty$.}

\del{Let $A$ be the generator of an analytic semigroup on $X$.
We define the extrapolation spaces of $A$ conform \cite[Section
2.6]{Pazy:83}; i.e.\ for $\alpha>0$ and $\lambda \in \CC$ such
that $\frak{R}e(\lambda)>\omega$ we define $H_{-\alpha}$ to be the
closure of $H$ under the norm $\| \cdot \|_{-\alpha} := \|
(\lambda I -A)^{-\alpha} \cdot \|$. We also define the fractional
domain spaces of $A$, i.e.\ for $\alpha>0$ we define
$H_{\alpha}=D((\lambda I -A)^{\alpha})$ and $\| \cdot\|_{\alpha}
:= \| (\lambda I -A)^{\alpha} \cdot \|$. One may check that
regardless of the choice of $\lambda$ the extrapolation spaces and
the fractional domain spaces are uniquely determined up to
isomorphisms: for $\alpha>0$ one has $(\lambda I -A)^{\alpha}(\mu
I -A)^{-\alpha}\in \CL(H)$. \del{ and:
$$\| (\lambda I -A)^{\alpha}(\mu I -A)^{-\alpha}\|_{L(H)}\leq C(\omega,\theta,K,\lambda,\mu),$$
where $C(\omega,\theta,K,\lambda,\mu)$ denotes a constant
depending only on $\omega,\theta,K,\lambda,$ and $\mu$. Moreover,
for $\alpha,\, \beta\in \RR$ one has $(\lambda
I-A)^{\alpha}(\lambda I-A)^{\beta}=(\lambda I-A)^{\alpha+\beta}$
on $X^A_{\gamma}$, where $\gamma= \max\{\beta,\alpha+\beta\}$ (see
\cite[Theorem 2.6.8]{Pazy:83}).}}
\end{Notation}

\section{Irreducibility of the Markovian semigroup associated to the equation \eqref{erste}}\label{sec:2}

One way to handle L\'evy processes is to work with the associated Poisson random measure.
In this section we will define the setting in which the results can be formulated.
We start with defining a time homogenous Poisson random measure.
\begin{definition}\label{def-Prm} 
Let $(Z,\CZ)$ be a measurable space and let $(\Omega,\CF,\FF,\PP)$
be a filtered probability space with right continuous filtration
$\FF=(\CF_t)_{t\ge 0}$. \noindent A {\sl time homogeneous Poisson
random measure} $\eta$ on $(Z,\CZ)$ over $ 
(\Omega,\CF,\FF,\PP)$,
is a measurable function $\eta: (\Omega,\CF)\to ( M_I(Z\times
\rrint), \CM_I(Z\times \rrint))$, such that
\begin{enumerate}
\item[(i)] $\eta(\emptyset\times I)=0$ a.s.\ for $I\in\CB([0,\infty) )$
and $\eta(A\times \emptyset)=0$ a.s.\ for $A\in\CZ$;
\item[(ii)] for each $B\times I \in \CZ \times \CB( \rrint)$,
 $\eta(B\times I):=i_{B\times I}\circ \eta : \Omega\to \bar{\mathbb{N}} $
 is a Poisson random variable with parameter\footnote{If $\nu(B)\lambda(I) = \infty$, then obviously $\eta(B\times I)=\infty$ a.s..} $\nu(B)\lambda(I)$.
\item[(iii)] $\eta$ is independently scattered, i.e.\ if the sets
$ B_j\times I_j \in \CZ\times \CB( \rrint)$, $j=1,\cdots, n$,
are pairwise disjoint, then the random variables $\eta(B_j\times
I_j )$, $j=1,\cdots,n $ are mutually independent.
\item[(iv)] for each $U\in \CZ$, the $\bar{\mathbb{N}}$-valued
process $(N(t,U))_{t>0}$ defined by
$$N(t,U):= \eta( U\times(0,t]), \;\; t>0$$
is $(\mathcal{F}_t)_{t\geq 0}$-adapted and its increments are
independent of the past, i.e.\ if $t>s\geq 0$, then
$N(t,U)-N(s,U)=\eta((s,t]\times U)$ is independent of
$\mathcal{F}_s$.
\end{enumerate}
The measure $\nu$ defined by
$$
\nu: \CZ\ni A \mapsto \EE\eta(A\times(0,1])\in \bar \NN
$$
is called the intensity of $\eta$.
\end{definition}

If the intensity of a Poisson random measure is a L\'evy measure, then one can construct from the Poisson random measure a L\'evy process. Vice versa,
tracing the jumps, one can find a Poisson random measure associated to each L\'evy process.
For more details on this relationship we refer to \cite{applebaum,maxreg}.
%
%
%
%

Let $\mathfrak{A}=(\Omega,\CF,\FF,\PP)$ be a complete probability measure with
right continuous filtration $\FF=(\CF_t)_{\{t\ge0\}}$, $\eta$ be a
time homogeneous Poisson random measure on 
$\ZZ$
over $\mathfrak{A}$ with intensity $\nu$ being a L\'evy measure\footnote{A L\'evy measure on $\CH$ is a $\sigma$--finite measure such that
$\nu(\{0\})=0$ and $\int_\CH (|z|^2 \wedge 1) \nu(dz)<\infty$.} and
compensator $\gamma$ defined by
$$
\gamma:\CB(\ZZ)\times \CB(\tim )\ni (A\times I) \mapsto
\gamma(A\times I):=\nu(A)\; \lambda(I)\in \RR ^+_0.
$$

\del{Let $\CZ$ be a Banach space and $\CB_\CZ$ be the unit ball in
$\CZ$, i.e.\ $\CB_\CZ:= \{ z\in\CZ: |z|_\CZ=1\}$. }
\del{\begin{hypo}\label{nondeg} There exists
 a measurable function
$k: \RR \to \RR^+$ such that
$$\nu(A)= \int_{\RR} 1_A(rz)\, k(z,r) \,dr\, \sigma(dz),
\quad A\in\CB(\CH).
$$
\end{hypo}
}

\begin{hypo}\label{nondeg1}\label{hypo3}
We assume that the L\'evy measure has a density $k$ and there
exist an index $\alpha\in(1,2]$ and constants $K_0>0$ and $r_0>0$
such that
$$
k(r) = K_0 |r|^{-\alpha-1},\quad \mbox{for all} \quad |r|\ge r_0.
$$
\end{hypo}

\del{\begin{definition}\label{def:levy}(see Linde \cite[Chapter 5.4]{0665.60005})
Let $E$ be a separable Banach space and let $E^\prime$ be its
dual. A symmetric\footnote{i.e. $\lambda(A)=\lambda(-A)$ for all
$A\in\CB(E)$,} $\sigma$-finite Borel measure $\lambda$ on $E$ is
called a {\sl L\'evy measure} if and only if
\begin{trivlist}
\item[(i)]
 $\lambda(\{0\} )=0$, and
\item[(ii)]
 the
function\footnote{As remarked in Linde \cite[Chapter
5.4]{0665.60005} we do not need to suppose that the integral $\int_E
(\cos\langle x,a\rangle -1) \; \lambda(dx)$ is finite. However, see
Corollary 5.4.2 in ibidem, if $\lambda$ is a symmetric L\'evy
measure, then, for each $a \in E^\prime$, the integral in
question is finite. }
$$
E^\prime \ni a\mapsto \exp \lk( \int_E (\cos\langle x,a\rangle -1)
\; \lambda(dx)\rk)
$$
is a characteristic function of a Radon measure on $E$.
\end{trivlist}
An arbitrary $\sigma$-finite Borel measure $\lambda$ on $E$ is
called a L\'evy measure provided its symmetric part
$\frac12(\lambda+\lambda^-)$, where $\lambda^-(A):=\lambda(-A)$,
$A\in\CB(E)$, is a
L\'evy measure.
The class of all L\'evy measures on $(E,\CB(E))$ will be denoted by
$\CL (E)$.
\end{definition}
\begin{remark}
If $E$ is a Hilbert space, then the set of all L\'evy measures $\CL(E)$ are all $\sigma$--finite measure $\nu$ such that we have
$\nu(\{0\})=0$ and
$$
\int_E \lk(|z|^2 \wedge 1\rk)\nu(dz)<\infty.
$$
\end{remark}}

\del{\begin{hypo}\label{antisym}
We assume that the function $k$ is symmetric in the following sense. For all
$z\in\CB_\CZ$ and $r\ge 0$ we have $k(z,r)=k(-z,r)$.
\end{hypo}
}

\del{Let $\eta$ be a Poisson random measure on $\CH $ with
intensity measure $\nu$. Let us assume that the Hypothesis
\ref{nondeg1} is valid.
\begin{remark}
Hypothesis \label{antisym} implies that the corresponding L\'evy measure
$\nu$ is symmetric, i.e. $\nu(A)=\nu(-A)$, $A\in\CB(\CZ)$.
\end{remark}}

From here and throughout the rest of the paper, let us assume that   $H$  a  is
Hilbert space, $A:H\to H$  a  generator of a strongly continuous semigroup $(e^{-tA})_{t\ge 0}$
on $H$
and $B:\CH\to D(A^{-\gamma})$ is bounded for some $\gamma<\frac 12$. Also let $\sigma:H\rightarrow \mathbb{R}$ be a Lipschitz mapping satisfying
$$C_\sigma< |\sigma(u)|\le \ell (1+|u|), $$ for some positive constants $C_\sigma$, $\ell$ and for any $u\in H$.
Let $u$ be the solution of the following stochastic evolution equation 
\DEQSZ\label{eqn-00}
\lk\{ \barray du(t,x) &=& Au(t,x) + \int_\CH B \sigma( u(t,x))\, z
\tilde \eta(dz,ds), \\
u(0,x) &=& x\in H.
\earray\rk.
\EEQSZ
Typical examples of such system are SPDEs with boundary noise and are presented in the following examples
(for more details we refer to section \ref{dampedwave}).
\begin{example}\label{damped-ex}
We consider the vibration of a string of length $2 \pi$ where one
end is fixed and the other end is perturbed by a Levy noise. To be
more precise, let
 $T>0$ and $\alpha>0$. We consider the system
\DEQSZ\label{damped}
\\
\nonumber
\lk\{ \barray
u_{tt}(t,\xi) &-& u_{\xi\xi}(t,\xi) + \alpha u(t,\xi) = 0,\quad t\in (0,T),\, \xi\in (0,2 \pi ),
\\
u(t,0)&=& 0 ,\quad t\in (0,T),
\\ u_\xi(t,2 \pi ) &=& \log\lk( 2+ |u(t)|_{L^ 2 (\CO)}\rk) \, \dot{L}_{t},\quad t\in (0,T), 
\\ u(0,\xi)&=& x_0(\xi), \quad  
u_t(0,\xi)=x_1(\xi), \quad  \xi\in (0,2 \pi ),
\earray\rk.
\EEQSZ where $\dot{L}$ is the Radon Nikodym derivative of a real
valued  L\'evy process with intensity  measure $\nu$, $x_0\in
H_0^1(0,2 \pi )$ and $x_1\in L^2(0,2 \pi )$.
\end{example}

\begin{example}\label{heat-ex}
We consider a one--dimensional rod $(0,1)$. A L\'evy noise  is added  at the boundary $\xi=1$, while the boundary $\xi=0$ is assumed to be perfectly isolated.
 To be
more precise, let
 $T>0$. We consider the system
\DEQSZ\label{heat}
\\
\nonumber
\lk\{ \barray
u_{t}(t,\xi) &-& u_{\xi\xi}(t,\xi)  = 0,\quad t\in (0,T),\, \xi\in (0,1 ),
\\
u_\xi(t,0)&=& 0 ,\quad t\in (0,T),
\\ u_\xi(t,1 ) &=&  \dot{L}_{t},\quad t\in (0,T), 
\\ u(0,\xi)&=& x_0(\xi), \quad  
  \xi\in (0,1 ).
\earray\rk.
\EEQSZ Here  $\dot{L}$ is the Radon Nikodym derivative of a real
valued  L\'evy process with intensity  measure $\nu$, $x_0\in
L^2(0,1)$ .
\end{example}
The existence of solution to the stochastic equations in these
examples can be established by fixed point argument as used in
\cite{PESZAT+ZABZCYK} and \cite{Maslowski}. \del{ Let $u$ be the
solution of the stochastic evolution equation
\DEQSZ \label{eq-0} \lk\{ \barray du(t) &=& A u(t)\, dt + \int_\CZ
B\
, z \tilde \eta(dz,dt)
\\
u(0) &=& u_0. \earray \rk.
 \EEQSZ}

\del{}

If $\int_\CH |z|^2 \, \nu(dz)<\infty$, then 
the Markovian semigroup $(\CP_t)_{t\ge 0}$ defined by
\DEQS 
\CP_t\phi(x) := \EE \phi( u(x,t)),\quad \phi\in C_b(\HH),\; t\ge
0, \EEQS is a stochastically continuous Feller semigroup on $C_b(H)$. That is $(\CP_t)_{t\ge 0}$
satisfies (see \cite{MR2244975})
\begin{enumerate}
  \item $\CP_t\circ \CP_s = \CP_{t+s}$;
  \item for all $\phi\in C_b(H)$ and for all $x\in H$ we have $\lim_{t\to 0} \CP_t \phi(x) =
  \phi(x)$.
\end{enumerate}
Item (1) is clear. In order to verify (2) let $\phi\in C_b(H)$
with $|\phi|_\infty=1$. Item (2) follows by the fact that
$\lim_{t\to 0}\EE\phi(u(t,x))=\phi(x)$, or, for all $\ep>0$ there
exists a $\delta>0$ such that $\lk|
\EE\phi(u(t,x))-\phi(x)\rk|\le \ep$ for all $0\le t<\delta$. Fix
$\ep>0$. \del{First, since $\EE |u(t,x)|_{H_1}<\infty$ for
$t\in[0,1]$ and $H_1\hookrightarrow H$ compactly, there exists a
compact set $K_\ep$ such that $\PP\lk( u(t,x)\not\in K_\ep\rk) \le
\frac \ep 3$.} Since $\phi$ is uniformly continuous on $H$, there
exists a $\delta_1>0$ such that $|\phi(x)-\phi(y)|\le \frac \ep 2$
for all $x,y\in H$, $|x-y|_H\le \delta_1$. Then for $t\le
\delta:=\frac \ep 6 \delta_1^2 $ we know by the Chebyscheff
inequality that $\PP\lk(|u(t,x)-x|_H\ge \delta_1 \rk)\le \frac\ep
2$. Hence, \DEQS
&& \EE | \phi(u(t,x))-\phi(x)| \le 
\\
&& \quad {} \EE\lk[ \lk|\phi(u(t,x))-\phi(x)\rk|  \mid 
 | u(t,x)-x|\ge \delta_1\rk]\PP\lk( |u(t,x)-x|_H\ge \delta_1 \rk)
\\
&& \quad + \EE\lk[ \lk|\phi(u(t,x))-\phi(x)\rk|  \mid 
 | u(t,x)-x|< \delta_1\rk]\le \frac \ep 2 + \frac \ep 2 
.\EEQS
%
It follows that Markovian $(\CP_t)_{t\ge 0}$  on $C_b(H)$  is a stochastically continuous. 

\del{Under appropriate conditions on $B$, it can be shown that $u$ admits a Markovian transition semigroup $(\CP_t)_{t\ge 0}$ 
on $H$ given by 
\DEQSZ\label{def-mark} \CP_t\phi(x) := \EE
\phi( u(x,t)),\quad \phi\in C_b(\HH),\quad t\ge 0. \EEQSZ
}

Before continuing we would like to introduce some definitions from
control theory.
%
%
%
Again,  $H$ denotes a Hilbert space, $A:H\to H$  a  generator of a
strongly continuous semigroup $(e^{-tA})_{t\ge 0}$ on $H$ and
$B:\CH \to H$. Fix $T>0$. Then we say that the system
\DEQSZ\label{eq-0-c} \lk\{ \barray \dot u^c(t,x,v) &=& A \uc
(t,x,v)+ B v(t),\quad t\ge 0,
\\
\uc (0,x,v)&=& x, \earray\rk.
\EEQSZ
is {\em null controllable in time $T$}, iff for any $x\in H$ there
exists a $v\in L^2([0,T];\CH )$ 
such that $\uc (T,x,v)=0$. We say  that the system  \eqref{eq-0-c}
is {\em approximate null controllable in time $T$}, iff for any
$x\in H$ and $\ep>0$ there
exists a $v\in L^2([0,T];\CH )$ 
such that $|\uc (T,x,v)|_H\le \ep$.
We say that the system  \eqref{eq-0-c} is {\em controllable in
time $T$} in $x\in\CCH$ if for each $y\in \CCH$
and $\ep>0$ there exists 
a control $v\in L^2([0,T];\CH )$ such that $  \uc (T,x,v)=y$.
We say that the system  \eqref{eq-0-c} is {\em approximate
controllable in time $T$} in $x\in\CCH$ if for each $y\in \CCH$
and $\ep>0$ there exists 
a control $v\in L^2([0,T];\CH )$ such that
$
\lk| \uc (T,x,v)-y\rk| \le \ep.
$
\begin{remark}
\begin{itemize}
  \item The system \eqref{eq-0-c} associated to the wave equation with boundary control described in Example \eqref{damped-ex} is exactly controllable in time $T>2\pi$ (Zuazua \cite{zuazua});
  \item The system \eqref{eq-0-c} associated to the heat  equation with Neumann boundary control described in Example \eqref{heat-ex} is approximate controllable in time $T>0$ (Laroche, Martin and Rouchon \cite{laroche}).
\end{itemize}

\end{remark}

\del{By the linearity of $A$ and $B$ it follows that for all $\ep>0$ and $C>0$, %
there exists a constant $K>0$ such that for all $x\in
\CB_\CCH(C):=\{ x\in \CCH: |x|\le C\}$ there exists a $\delta>0$
and $v_0\in L^2([0,\infty);\CH )$, $\|v_0\|_{L^2([0,\infty);\CH
)}\le K$, such that for all $v\in L^2([0,\infty);\CH )$ with $\|
v_0-v\|_{v\in L^2([0,\infty);\CH )}\le \delta$ we have $ \lk| \uc
(t,x,v)\rk| \le \ep. $}

For all $C>0$ we set  $$\CR_H(C):=\{ z\in H: |z|\le C\}.$$
Let $u$ be the solution of the stochastic evolution equation
\DEQSZ \label{eq-0ohnev} \lk\{ \barray du(t,x) &=& A u(t,x)\, dt + \int_\CZ
B\, z \tilde \eta(dz,dt)
\\
u(0,x) &=& x. \earray \rk.
 \EEQSZ
 Then the following Theorem can be shown.
\begin{theorem}\label{nondeg2}
\del{Let $e^{-t A}$ be semigroup generated by $A$ and suppose that
\begin{equation}\label{SEM}
 \int_0^T |e^{-(T-s)A}B|^2 ds< \infty, \text{ for any } T>0.
\end{equation} Also, a}
Assume that the system \eqref{eq-0-c} is approximate null
controllable in time $T>0$ and that Hypothesis \ref{hypo3} is satisfied.
Let $u$ be a solution of Eq.  \eqref{eq-0ohnev}. Fix $x\in H$.
Then for any $\delta>0$ there exists  a
$\kappa>0$ such that
\DEQSZ\label{inv-22111} \PP\lk( u(T,x)\in
\CR_H(\delta)\rk)\ge \kappa. \EEQSZ
\end{theorem}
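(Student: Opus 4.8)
The plan is to use the mild formulation together with a Girsanov-type change of measure for the Poisson random measure (the Bismut--Graveraux--Jacod and Sato formulae collected in the appendices), in the spirit of Maslowski--Seidler. Since the noise in \eqref{eq-0ohnev} is additive, the mild solution is explicit,
\[
u(T,x)=e^{-TA}x+Z(T),\qquad Z(T):=\int_0^T\!\!\int_\CH e^{-(T-s)A}Bz\,\tilde\eta(dz,ds),
\]
and $Z(T)$ is an $H$-valued infinitely divisible random variable whose jumps all lie in the controllability subspace $V:=\overline{\mathrm{span}\{e^{-(T-s)A}B:s\in[0,T]\}}$. First I would record two elementary facts: (i) the closure of the reachable set $\{\int_0^T e^{-(T-s)A}Bv(s)\,ds:v\in L^2([0,T];\CH)\}$ equals $V$; and (ii) approximate null controllability in time $T$ says precisely that $-e^{-TA}x\in V$. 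Fixing a control $v$ with $|u^c(T,x,v)|=|e^{-TA}x+y_v|\le\delta/2$, where $y_v:=\int_0^T e^{-(T-s)A}Bv(s)\,ds$, it then suffices to prove $\PP\lk(|Z(T)-y_v|\le \delta/2\rk)>0$; conceptually, this is the statement that the centre $0$ lies in the support of the law of $u(T,x)$.

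The engine is a change of measure that injects the deterministic drift $y_v$ into $Z(T)$. Because the noise is pure jump, a drift cannot be added by a Cameron--Martin shift; instead I would reweight the intensity, replacing $\nu(dz)\,ds$ by $\rho(s,z)\,\nu(dz)\,ds$ for a predictable $\rho>0$ chosen so that the induced first--moment shift reproduces the control, i.e. $\int_\CH z\,(\rho(s,z)-1)\,\nu(dz)=v(s)$ for a.e. $s\in[0,T]$; then, by linearity of $B$, the extra drift produced in $Z(T)$ is exactly $\int_0^T e^{-(T-s)A}B\,v(s)\,ds=y_v$. Under the corresponding equivalent measure $\QQ$ (whose existence is the content of the Bismut--Graveraux--Jacod / Sato criterion of the appendix) the $\QQ$-compensator of $\eta$ is $\rho\,\nu\otimes\lambda$, so that $Z(T)=y_v+M$ with $M:=\int_0^T\!\int_\CH e^{-(T-s)A}Bz\,\tilde\eta^{\QQ}(dz,ds)$ a $\QQ$-centred jump convolution. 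This is where Hypothesis \ref{hypo3} enters essentially: the tail $k(r)=K_0|r|^{-\alpha-1}$ with $\alpha\in(1,2]$ guarantees on the one hand that the large jumps have a finite first moment ($\alpha>1$), so that the shift integral $\int_\CH z(\rho-1)\,\nu(dz)$ converges and any prescribed $v(s)$ is attainable, and on the other hand that the reweighting has finite Hellinger cost $\int_\CH(\sqrt{\rho}-1)^2\,\nu(dz)<\infty$, which is what makes $\QQ\sim\PP$. \emph{Constructing such an admissible $\rho$ realising the given $v$ is the main obstacle}, and is the step that genuinely needs the heavy-tail assumption rather than mere non-degeneracy of $\nu$.

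Finally I would transfer the estimate back to $\PP$. Under $\QQ$ we have $u(T,x)=e^{-TA}x+y_v+M$, hence $|u(T,x)|\le \delta/2+|M|$, where $M$ is a $\QQ$-infinitely divisible, $\QQ$-centred convolution with no Gaussian part and Lévy measure supported on $V$. By the same support computation as above (the generated closed subsemigroup is $V$, which contains the centring vector), the origin belongs to the support of the law of $M$, so $\QQ\lk(|M|\le\delta/2\rk)>0$ and therefore $\QQ\lk(u(T,x)\in\CR_H(\delta)\rk)>0$. Since $\QQ$ and $\PP$ are equivalent, the same event has positive $\PP$-probability, and we may take $\kappa:=\PP\lk(u(T,x)\in\CR_H(\delta)\rk)>0$, which is exactly \eqref{inv-22111}. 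Beyond the tilting construction I expect the remaining points to be routine bookkeeping: the stochastic Fubini used to split $M$ into its small- and large-jump parts, and the verification that the drift produced by $\rho$ equals $y_v$ and lies in $V$, so that centring near $-e^{-TA}x$ is consistent with approximate null controllability.
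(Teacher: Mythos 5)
Your overall strategy --- tilt the jump intensity so that the control $v$ appears as a drift in the stochastic convolution, argue that the recentred remainder is small with positive probability, and transfer back through the equivalent measure --- is the same as the paper's. The reduction to $\PP\lk(|Z(T)-y_v|\le\delta/2\rk)>0$ is correct, and your identification of the tilting construction (your density $\rho$; the paper's transformation $\theta$ furnished by Lemma \ref{trans_ex}) as the place where Hypothesis \ref{hypo3} enters is accurate.

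The genuine gap is in the step you declare routine: the assertion that $\QQ\lk(|M|\le\delta/2\rk)>0$ because ``the origin belongs to the support of the law of $M$''. No support computation has actually been carried out. What you would need is a support theorem for infinite-dimensional infinitely divisible laws without Gaussian part (support equals the closure of the shift plus the additive semigroup generated by the support of the L\'evy measure of the convolution), together with proofs that this semigroup is dense in $V$ and that the compensator drift of $M$ lies in it; none of this is immediate. Worse, if such a theorem were at your disposal the whole change of measure would be superfluous: approximate null controllability already gives $-e^{-TA}x\in V$, so the same support theorem applied to $Z(T)$ under $\PP$ would finish the proof in one line --- a sign that the real work is being hidden in this step. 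The paper instead proves the positivity by hand: after the measure change it conditions on the event that the auxiliary Poisson random measure $\mu$ has no points in $\CR_\CH(R)\times[0,T]$ (an event of probability $e^{-\lambda(\CR_\CH(R))T}>0$, which under the reparametrisation $c$ removes all jumps of the L\'evy process above a small threshold), and controls the remaining small-jump integral by Chebyshev, choosing $R$ so that its second moment $C\,T\,R^{1-2/\alpha}$ is at most $\tfrac12(\delta/3)^2$. This forces a twist your sketch does not anticipate: deleting the jumps in $\CR_\CH(R)$ creates a deterministic compensator drift $g_R$, so the transformation must reproduce $v(s)+g_R$, not just $v(s)$. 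Finally, the paper obtains the bound quantitatively via the reverse H\"older inequality $\PP(\cdot)\ge \QQ(\cdot)/\EE^{\PP}[\CG_\theta(T)]$ together with an explicit bound on $\EE^{\PP}\sup_t|\CG_\theta(t)|$ (partitioning $[0,T]$ so that $\int|v|^2$ is small on each piece); mere positivity would suffice for Theorem \ref{nondeg2}, but the explicit constant is what later gives the uniformity in $x$ needed for Theorems \ref{nondeg22}--\ref{nondeg2cad2}. Until you either prove the support statement for $M$ or replace it by an estimate of this kind, the proof is incomplete.
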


In case the system is exactly controllable the result of the above theorem can be strengthen as follows.

\begin{theorem}\label{nondeg22}
Assume that the system \eqref{eq-0-c} is  exactly  null
controllable in time $T>0$ and that Hypothesis \ref{hypo3} is satisfied.
Let $u$ be a solution of Eq.  \eqref{eq-0ohnev}.
Then for all $C>0$, for all $x\in \CR_H(C)$ and all $\delta>0$ there
exists  $\kappa>0$ such that
\DEQSZ\label{inv-221} \PP\lk( u(T,x)\in
\CR_H(\delta)\rk)\ge \kappa. \EEQSZ
\del{ where $\CR_H(\delta)=\{
z\in H: |z|< \delta\}$.}
\end{theorem}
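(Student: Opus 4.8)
The plan is to reduce the uniform positivity asserted here to the pointwise statement of Theorem \ref{nondeg2}, exploiting the extra rigidity that \emph{exact} (rather than merely approximate) null controllability provides: a control depending linearly and boundedly on the initial datum. Since \eqref{eq-0ohnev} is linear with additive noise, the mild solution splits as
\[ u(T,x)=e^{-TA}x+Z(T),\qquad Z(T):=\int_0^T\!\!\int_\CH e^{-(T-s)A}Bz\,\tilde\eta(dz,ds), \]
where the stochastic convolution $Z(T)$ does \emph{not} depend on $x$. Thus $\lk\{u(T,x)\in\CR_H(\delta)\rk\}=\lk\{Z(T)\in\CR_H(\delta)-e^{-TA}x\rk\}$, and the claim is equivalent to a lower bound, uniform over $x\in\CR_H(C)$, for the mass the law of $Z(T)$ puts on the ball of radius $\delta$ centred at $-e^{-TA}x$. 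The obstruction to reading this off from Theorem \ref{nondeg2} is that the set of centres $-e^{-TA}(\CR_H(C))$ need not be relatively compact in $H$ (the semigroup need not be compact, e.g.\ for the wave equation of Example \ref{damped-ex}), so a finite-covering argument is unavailable.

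To overcome this I would invoke the Douglas range--inclusion theorem. Writing $L_T\colon L^2([0,T];\CH)\to H$, $L_Tv:=\int_0^T e^{-(T-s)A}Bv(s)\,ds$, for the controllability map (bounded by the standing hypothesis $B\colon\CH\to D(A^{-\gamma})$, $\gamma<\tfrac12$), exact null controllability in time $T$ says precisely that $-e^{-TA}x\in\im(L_T)$ for all $x$, i.e.\ $\im(e^{-TA})\subseteq\im(L_T)$. Douglas' theorem then furnishes a \emph{bounded} operator $M\colon H\to L^2([0,T];\CH)$ with $L_TM=e^{-TA}$. Setting $v_x:=-Mx$ gives, for every $x$, a control with $e^{-TA}x+L_Tv_x=0$ and $\|v_x\|_{L^2}\le\|M\|\,|x|\le\|M\|\,C$ on $\CR_H(C)$: the deterministic trajectory is steered \emph{exactly} to the origin, by a control uniformly bounded on the ball.

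Next I would run the change-of-measure argument underlying Theorem \ref{nondeg2}, tracking the dependence on $x$. For each $x$ I would tilt the intensity of $\eta$, replacing $\nu(dz)\,ds$ by $\phi^x_s(z)\,\nu(dz)\,ds$, with the scalar tilting chosen so that the induced drift matches the control,
\[ \int_\CH z\,(\phi^x_s(z)-1)\,\nu(dz)=v_x(s)\qquad\text{for a.e. }s\in[0,T]. \]
By Hypothesis \ref{hypo3} the polynomial tail $k(r)=K_0|r|^{-\alpha-1}$, $\alpha\in(1,2]$, carries a finite first moment and enough mass to realise any prescribed shift of the mean while keeping $\int_\CH(\sqrt{\phi^x_s}-1)^2\,\nu(dz)$ finite; the change-of-measure formula of Bismut--Graveraux--Jacod and Sato \cite{MR1008471,sato-book} then yields an equivalent measure $\QQ_x$ with a well-defined density $\tfrac{d\PP}{d\QQ_x}$. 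Under $\QQ_x$ the compensated integral acquires exactly the drift $L_Tv_x=-e^{-TA}x$, so that
\[ u(T,x)=e^{-TA}x+Z(T)=\int_0^T\!\!\int_\CH e^{-(T-s)A}Bz\,\tilde\eta^{\QQ_x}(dz,ds) \]
is, under $\QQ_x$, a mean-zero stochastic convolution driven by a Poisson random measure whose intensity still obeys a Hypothesis \ref{hypo3}-type bound.

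Finally I would combine two uniform estimates. The small-jump analysis of Theorem \ref{nondeg2}, applied to the mean-zero convolution above, gives $\QQ_x(u(T,x)\in\CR_H(\delta))\ge p_0>0$; since the tilting $\phi^x$, and hence all the moments entering this estimate, is controlled solely through $\|v_x\|_{L^2}\le\|M\|C$, the constant $p_0$ is independent of $x\in\CR_H(C)$. The same uniform control of $\phi^x$ gives $\tfrac{d\PP}{d\QQ_x}\ge c_0>0$ on a set of $\QQ_x$-probability bounded away from zero, whence
\[ \PP\lk(u(T,x)\in\CR_H(\delta)\rk)=\EE_{\QQ_x}\lk[\frac{d\PP}{d\QQ_x}\,\mathbf{1}_{\{u(T,x)\in\CR_H(\delta)\}}\rk]\ge c_0\,p_0=:\kappa, \]
with $\kappa$ independent of $x\in\CR_H(C)$. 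The main obstacle is exactly this last uniformity over a non-compact ball: the role of \emph{exact} null controllability is to supply the bounded linear selection $x\mapsto v_x$, which forces the change-of-measure density and the small-ball probability to be uniformly nondegenerate — something approximate controllability, giving only a separate near-steering control for each $x$, cannot guarantee.
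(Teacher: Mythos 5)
Your proposal is correct and follows essentially the same route as the paper: both arguments reduce the uniform lower bound to the observation that the estimate \eqref{ending} obtained in the proof of Theorem \ref{nondeg2} depends on the initial datum $x$ only through $\|v_x\|_{L^2([0,T];\CH)}$, and then use exact null controllability to produce steering controls whose $L^2$-norm is bounded uniformly over $\CR_H(C)$. Your appeal to Douglas' range-inclusion lemma to obtain the bounded linear selection $x\mapsto v_x$ with $L_TM=e^{-TA}$ is in fact a cleaner justification than the paper's assertion that $\Phi_T$ is invertible with bounded inverse (which, as literally stated, overlooks the kernel of $\Phi_T$), but this is a refinement of the same idea rather than a different proof.
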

\begin{remark}
{If  the system \eqref{eq-0-c} is controllable for a time $T$,
then one can replace the disk $\CR_H(C)$ centered at the point $0$
with a disk centered at any point $y\in H$.}
\end{remark}

In case the solution $u$ is \cadlag \ in $H$, the result can be strengthened as well.
Let $u$ be the solution of the stochastic evolution equation
\DEQSZ \label{eq-0mitv} \lk\{ \barray du(t,x) &=& A u(t,x)\, dt + \int_\CZ
B\, \sigma(u(t,x)) \, z \tilde \eta(dz,dt)
\\
u(0,x) &=& x, \earray \rk.
 \EEQSZ
 where $\sigma:H\to\RR$ is a Lipschitz mapping of linear growth and such that for certain  $C_\sigma>0$ we have  $|\sigma(x)|\ge C_\sigma$, $\forall x\in H$.

 Then the following two Theorems can be shown.
\begin{theorem}\label{nondeg2cad}
Assume that the system \eqref{eq-0-c} is approximate null
controllable in time $T>0$ and that Hypothesis \ref{hypo3} is satisfied.
Let $u$ be a solution of Eq.  \eqref{eq-0mitv}. Fix $x\in H$.
If $u$ is \cadlag \ in $H$, then
for any $\delta>0$ there exists  a
$\kappa>0$ such that \DEQSZ\label{inv-223} \PP\lk( u(T,x)\in
\CR_H(\delta)\rk)\ge \kappa. \EEQSZ
\end{theorem}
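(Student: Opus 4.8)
\emph{Proof strategy.} The plan is to transport the deterministic control into the equation through an absolutely continuous change of the compensator, to reduce the claim to a small-ball estimate for a centred noise term, and then to transfer positivity back to $\PP$. This is the same scheme used for the additive equation in Theorem \ref{nondeg2}; the only genuinely new point is the path-dependent factor $\sigma(u(t-))$, and this is exactly where the \cadlag\ assumption enters. Note already that, because of the multiplicative structure, the relevant skeleton is the \emph{linear} control system \eqref{eq-0-c}: the factor $\sigma(u)$ will cancel against the mean-shift, which is why approximate null controllability of \eqref{eq-0-c} (and not of some $\sigma$-dependent system) is the right hypothesis.

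First I would fix $\delta>0$ and, by approximate null controllability, choose a control $v\in L^2([0,T];\CH)$ with $|\uc(T,x,v)|_H\le\delta/2$. Using the change-of-measure formula of Bismut, Graveraux and Jacod \cite{MR1008471} and Sato \cite{sato-book} I would construct a predictable tilting $Y(s,z)$, supported in the tail $\{|z|\ge r_0\}$ where Hypothesis \ref{hypo3} supplies the explicit density, such that the Dol\'eans exponential
\DEQS
G_t=\exp\lk(\int_0^t\int_\CH \log Y(s,z)\,\eta(dz,ds)-\int_0^t\int_\CH (Y(s,z)-1)\,\nu(dz)\,ds\rk)
\EEQS
is a true martingale and defines a probability measure $\QQ\sim\PP$ by $d\QQ=G_T\,d\PP$, under which $\eta$ has compensator $Y(s,z)\,\nu(dz)\,ds$. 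The tilting is chosen so that the drift it produces equals the control, i.e.\ $\sigma(u(s-))\int_{|z|\ge r_0}z\,(Y(s,z)-1)\,\nu(dz)=v(s)$. This scalar identity is solvable because $\sigma\ge C_\sigma>0$ and, since $\alpha>1$, the tail has finite first moment $\int_{|z|\ge r_0}|z|\,\nu(dz)<\infty$; the \cadlag\ property makes $s\mapsto\sigma(u(s-))$ a left-continuous, hence predictable, integrand, so $Y$ is predictable and $G$ a genuine density.

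Under $\QQ$ the process $u$ then solves $du=Au\,dt+Bv(t)\,dt+\int_\CH B\sigma(u)z\,\tilde\eta^{Y}(dz,dt)$, where $\tilde\eta^{Y}$ is the $\QQ$-compensated measure. Writing $u=\uc(\cdot,x,v)+w$, the remainder solves $dw=Aw\,dt+\int_\CH B\sigma(u)z\,\tilde\eta^{Y}(dz,dt)$ with $w(0)=0$ and is a centred, $H$-valued, \cadlag\ process. Since $|\uc(T,x,v)|_H\le\delta/2$, it suffices to show $\QQ(|w(T)|_H\le\delta/2)\ge c_1>0$. I would establish this as in Theorem \ref{nondeg2}: split the jumps at the threshold $r_0$ into the compensated small-jump part, an $L^2$-martingale whose $\EE_\QQ|w^{sm}(T)|_H^2$ is controlled by $\int_{|z|\le r_0}|z|^2\,\nu(dz)<\infty$ together with the linear growth of $\sigma$ and an a priori bound $\sup_{t\le T}\EE_\QQ|u(t)|_H^{p}<\infty$, and the finite-activity large-jump part, whose biased jumps realise the required target with positive probability. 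A Chebyshev/BDG estimate for the small-jump convolution, on the event that the biased large jumps land near the required value, then yields the lower bound $c_1$.

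Finally I would transfer positivity to $\PP$. Since $\QQ\sim\PP$ with $d\PP/d\QQ=G_T^{-1}$ and $G_T<\infty$ $\QQ$-a.s., for $A=\{|u(T,x)|_H\le\delta\}$ one has $\PP(A)=\EE_\QQ[\mathbf 1_A\,G_T^{-1}]\ge M^{-1}\lk(\QQ(A)-\QQ(G_T>M)\rk)$; choosing $M$ so large that $\QQ(G_T>M)\le c_1/2$ gives $\PP(A)\ge (2M)^{-1}c_1=:\kappa>0$, which is \eqref{inv-223}. The main obstacle is the small-ball estimate for $w$: the noise is heavy-tailed (for $\alpha\le 2$ one has $\int_\CH|z|^2\,\nu(dz)=\infty$), so the compensated small jumps must be kept together while only the large jumps are truncated, and the multiplicative, path-dependent coefficient $\sigma(u(t-))$ must be controlled uniformly—this is precisely what forces both the \cadlag\ regularity and the a priori moment bounds.
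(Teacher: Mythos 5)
Your overall architecture --- a Girsanov tilt of the Poisson random measure calibrated so that the induced drift equals $Bv(s)$, with the factor $\sigma(u(s-))$ divided out (legitimate because $\sigma\ge C_\sigma$ and, thanks to the \cadlag\ assumption, $s\mapsto\sigma(u(s-))$ is left-continuous hence predictable), followed by a small-ball estimate for the centred remainder and a transfer of positivity back through the density --- is exactly the paper's strategy, and your transfer step $\PP(A)=\EE_\QQ[\mathbf 1_A G_T^{-1}]\ge M^{-1}\lk(\QQ(A)-\QQ(G_T>M)\rk)$ is a correct (arguably cleaner) substitute for the paper's ``inverse H\"older'' argument. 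The gap is in the middle step, and it is structural rather than technical.

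You support the tilt $Y$ on the finite-activity tail $\{|z|\ge r_0\}$ and propose to close the small-ball estimate ``on the event that the biased large jumps land near the required value.'' Work out what that event is. Under $\QQ$ one has $u(T)=\uc(T,x,v)+w^{sm}(T)+w^{lg}(T)$ with
$w^{lg}(T)=\sum_i e^{-(T-s_i)A}B\sigma(u(s_i-))z_i-\int_0^T e^{-(T-s)A}B\,\sigma(u(s-))\int_{|z|\ge r_0}zY(s,z)\,\nu(dz)\,ds$,
and your calibration identity forces the subtracted compensator to equal $\Phi_T(v):=\int_0^Te^{-(T-s)A}Bv(s)\,ds$ (the term $\int_{|z|\ge r_0}z\,\nu(dz)$ vanishes by the symmetry of $k$ in Hypothesis \ref{hypo3}). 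Hence on the cheapest event --- no large jumps at all, which is the analogue of the paper's conditioning --- the control cancels identically: $u(T)=e^{-TA}x+w^{sm}(T)$, and approximate null controllability buys you nothing. The only alternative is to ask that the finitely many tilted jumps $\sum_ie^{-(T-s_i)A}B\sigma(u(s_i-))z_i$ approximate the $H$-valued target $\Phi_T(v)$ to within $\delta/2$ with positive probability; that is a nontrivial anti-concentration statement for a compound-Poisson functional whose coefficient $\sigma(u(s_i-))$ depends on the past jumps themselves, and nothing in your sketch supplies it. The paper escapes this dilemma by reparametrizing the noise through the decreasing map $c$ and placing the tilt $\theta$ on the \emph{infinite-activity} region $\CH\setminus\CR_\CH(R)$, while the conditioning $\mu(\CR_\CH(R)\times[0,T])=0$ suppresses points of the \emph{disjoint} finite-$\lambda$-measure region $\CR_\CH(R)$; crucially, the defining equation for $\theta$ is $v(s)/\sigma(u(s-))+g_R=\int_{\CH\setminus B_\CH(\vardelta)}[c(z)-c(\theta(s,z))]\,\lambda(dz)$, where the extra term $g_R=\int_{\CR_\CH(R)}c(z)\,\lambda(dz)$ pre-absorbs the compensator of the removed region so that no cancellation occurs, and the surviving compensated integral over $\CH\setminus\CR_\CH(R)$ is then made small by Chebyshev exactly as you do for your small-jump part. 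To repair your proof you must (i) move the tilt off the set of jumps you intend to suppress and (ii) add the analogue of $g_R$ to your calibration identity; as written, the small-ball step fails.
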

\begin{theorem}\label{nondeg2cad2}
Assume that the system \eqref{eq-0-c} is exactly null
controllable in time $T>0$ and that Hypothesis \ref{hypo3} is satisfied.
Let $u$ be a solution of Eq.  \eqref{eq-0mitv}. If $u$ is \cadlag \ in $H$, then
for all $C>0$, for all $x\in \CR_H(C)$ and all $\delta>0$ there
exists  $\kappa>0$ such that
\DEQSZ\label{inv-2211} \PP\lk( u(T,x)\in
\CR_H(\delta)\rk)\ge \kappa. \EEQSZ
\del{ where $\CR_H(\delta)=\{
z\in H: |z|< \delta\}$.}
\end{theorem}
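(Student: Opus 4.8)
The plan is to deduce the estimate \eqref{inv-2211} from a Girsanov-type change of measure that inserts the steering control of the deterministic system \eqref{eq-0-c} into the dynamics of \eqref{eq-0mitv}, following the scheme of Maslowski and Seidler \cite{bohdan} together with the change-of-measure formula of \cite{MR1008471} and \cite{sato-book}. The scheme parallels the additive-noise argument of Theorem \ref{nondeg22}, the only genuinely new feature being the multiplicative factor $\sigma(u(s-))$, which is controlled through $C_\sigma\le|\sigma(u)|\le\ell(1+|u|)$. Since \eqref{eq-0-c} is exactly null controllable in time $T$, the closed graph theorem provides a bounded linear steering map $x\mapsto v_x$, $H\to L^2([0,T];\CH)$, with $\uc(T,x,v_x)=0$ and $\|v_x\|_{L^2([0,T];\CH)}\le K|x|_H$; hence $\|v_x\|_{L^2}\le KC$ uniformly for $x\in\CR_H(C)$. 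This uniform energy bound over the ball is precisely what upgrades the single-point statement of Theorem \ref{nondeg2cad} to the ball-wise statement claimed here.

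For fixed $x\in\CR_H(C)$ I would define, on $(\Omega,\CF_T)$, an equivalent probability measure $\QQ_x$ whose density $\rho_x=d\QQ_x/d\PP$ is the Dol\'eans exponential associated with a predictable tilting $g_x(s,z)$ of the intensity $\nu(dz)\,ds$, chosen so that under $\QQ_x$ the compensated measure $\tilde\eta^{\QQ_x}(dz,ds)=\eta(dz,ds)-g_x(s,z)\,\nu(dz)\,ds$ produces the drift correction $\int_\CH B\sigma(u(s-))z\,(g_x(s,z)-1)\,\nu(dz)=Bv_x(s)$. Since $u$ is \cadlag, the map $s\mapsto\sigma(u(s-))$ is a legitimate predictable integrand, and since $|\sigma|\ge C_\sigma>0$ the pointwise requirement $\int_\CH z\,(g_x(s,z)-1)\,\nu(dz)=v_x(s)/\sigma(u(s-))$ is solvable. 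Hypothesis \ref{hypo3}, which fixes the polynomial tail $k(r)=K_0|r|^{-\alpha-1}$ with $\alpha\in(1,2]$, guarantees both that this first moment is finite and that $g_x$ can be chosen so that $\rho_x$ is a genuine probability density (so $\QQ_x$ is a probability measure) that is moreover square-integrable; the uniform bound $\|v_x\|_{L^2}\le KC$ then yields a constant $M=M(C)$ with $\EE_\PP[\rho_x^2]\le M$ for all $x\in\CR_H(C)$.

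Under $\QQ_x$ the mild solution of \eqref{eq-0mitv} splits, because the drift is linear, as
\DEQS
u(T,x) &=& \lk(e^{-TA}x+\int_0^T e^{-(T-s)A}Bv_x(s)\,ds\rk)+\int_0^T\!\!\int_\CH e^{-(T-s)A}B\sigma(u(s-))z\,\tilde\eta^{\QQ_x}(dz,ds) \\
&=& \uc(T,x,v_x)+\Psi(T)=\Psi(T),
\EEQS
where $\Psi$ is the stochastic convolution driven by $\tilde\eta^{\QQ_x}$ and the controlled part vanishes by the choice of $v_x$. I would then show that $\QQ_x(|\Psi(T)|_H\le\delta)\ge c(\delta)>0$ uniformly in $x\in\CR_H(C)$: splitting the jumps at a small threshold $\eps_0$, the compensated small-jump part of $\Psi(T)$ has $\QQ_x$-variance controlled by $\int_{|z|\le\eps_0}z^2\,\nu(dz)$ (which is small for $\eps_0$ small, using the linear growth of $\sigma$ and the uniform moment bounds on $\sup_{s\le T}|u(s,x)|$), while the event that no jump exceeds $\eps_0$ on $[0,T]$ has $\QQ_x$-probability bounded below; combining these gives the uniform lower bound $c(\delta)$.

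Finally, writing $E=\{u(T,x)\in\CR_H(\delta)\}$ and $\rho_x=d\QQ_x/d\PP$, a Cauchy--Schwarz argument under $\QQ_x$ gives
\DEQS
\PP(E) &=& \EE_{\QQ_x}\lk[\rho_x^{-1}\mathbf{1}_E\rk]\ge\frac{\QQ_x(E)^2}{\EE_{\QQ_x}[\rho_x]}=\frac{\QQ_x(E)^2}{\EE_\PP[\rho_x^2]}\ge\frac{c(\delta)^2}{M}=:\kappa>0,
\EEQS
uniformly over $x\in\CR_H(C)$, which is exactly \eqref{inv-2211}. The main obstacle is the second step combined with the lower bound of the third: constructing the tilting $g_x$ so that $\rho_x$ is a true square-integrable martingale density with second moment bounded uniformly over the ball (this is where Hypothesis \ref{hypo3} and the bound $\sigma\ge C_\sigma$ are essential, and where the \cadlag\ regularity is needed to turn $\sigma(u(s-))$ into an admissible predictable integrand), and showing that the residual convolution $\Psi(T)$ charges every neighbourhood of the origin with probability bounded away from zero independently of $x$.
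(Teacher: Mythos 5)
Your overall architecture is the same as the paper's: Theorem \ref{nondeg2cad2} is obtained there by running the change-of-measure argument of Theorem \ref{nondeg2cad} and making it uniform over $\CR_H(C)$ exactly as you propose, namely via the bounded inverse of the controllability map $\Phi_T(v)=\int_0^Te^{-(T-s)A}Bv(s)\,ds$ (giving $\|v_x\|_{L^2}\le K|x|_H\le KC$, as in the proof of Theorem \ref{nondeg22}) together with a uniform choice of the truncation radius, possible because $K=\int_0^T\EE|u(s,x)|_H^2\,ds$ is bounded over the ball. Your Girsanov tilting with target drift $v_x(s)/\sigma(u(s-))$, the role of $C_\sigma$ and of the \cadlag\ property in making this a predictable integrand, and the Cauchy--Schwarz transfer $\PP(E)\ge \QQ_x(E)^2/\EE_\PP[\rho_x^2]$ (a cleaner form of the paper's ``inverse H\"older'' step) all match the paper's scheme.

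There is, however, a genuine gap in your lower bound for $\QQ_x(|\Psi(T)|_H\le\delta)$, and it sits at the one point where the paper's proof is delicate. Since $\alpha>1$, the integrability constraint $\int_\CH|z|\,|g_x(s,z)-1|\,\nu(dz)<\infty$ forces your tilting to be supported away from $z=0$. If it is supported on $\{|z|>\eps_0\}$, then on your conditioning event $E_0=\{\hbox{no jump exceeds }\eps_0\hbox{ on }[0,T]\}$ the tilted jumps never occur, so their contribution to $\Psi(T)$ reduces to \emph{minus} their $\QQ_x$-compensator; this exactly cancels the steering drift you inserted, and one is left with $u(T,x)=e^{-TA}x+(\hbox{small-jump martingale})+O(1)$ on $E_0$ --- the control has been undone and $e^{-TA}x$ is not small. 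Even if the tilting is placed below $\eps_0$, the compensator of the untilted jumps above $\eps_0$ survives on $E_0$ as an $O(1)$ drift that your estimate ignores, and the variance of the small-jump part under $\QQ_x$ carries the factor $g_x$, not just $\nu$. The paper resolves both issues simultaneously: the transformation $\theta$ of Lemma \ref{trans_ex} perturbs jumps only in a \emph{moderate} window of sizes (which do occur on the conditioning event, so the steering is realized), its target is $v(s)/\sigma(u(s-))+g_\ww$ with $g_\ww=\int_{\CR_\CH(\ww)}c(z)\,\lambda(dz)$ chosen precisely so that the compensator of the excluded large jumps cancels on $\{\mu(\CR_\CH(\ww)\times[0,T])=0\}$, and the residual variance is then $C\,\TT\,\ww^{1-2/\alpha}C_\sigma(1+K)$, made $\le 1/2$ by the choice \eqref{RK-chosen} of $\ww$. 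Without this compensator bookkeeping and the placement of the perturbation inside the surviving jump range, the step ``combining these gives the uniform lower bound $c(\delta)$'' does not go through.
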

\begin{example}
In Section \ref{sec:4} we will see that the linear problem \eqref{eq-0-c} for Example \ref{damped-ex} is exactly controllable and the solution is \cadlag \ in $L^2(\CO)$.
That means, the solution of system \eqref{damped} satisfies the assumptions of Theorem  \ref{nondeg2cad2}.
\end{example}
\begin{example}
 The linear system \eqref{eq-0-c} for Example \ref{heat-ex} is only approximately  controllable and the solution is not \cadlag \ in $L^2(\CO)$.
Thus the solution of system \eqref{heat} does not satisfy the assumptions of Theorem \ref{nondeg22}, Theorem \ref{nondeg2cad} nor Theorem \ref{nondeg2cad2}.
However, the assumptions of Theorem \ref{nondeg2} are satisfied.
\end{example}

\del{

\begin{remark}\label{nondeg22}
Assume that the system \eqref{eq-0-c} is  approximate null
controllable in time $T>0$ and that Hypothesis \ref{hypo3} is satisfied. Let
$y\in H$ be fixed.
Then for all $C>0$, for all $x\in \CR_H(C)$ and $\delta>0$ there
exists  $\kappa>0$ such that
\DEQSZ\label{inv-221} \PP\lk( u(T_\delta,x)\in
\CR_H(y,\delta)\rk)\ge \kappa, \EEQSZ where $\CR_H(y,\delta)=\{
z\in H: |z-y|< \delta\}$.
\end{remark}
}
%
%
%
%
\begin{proof}[{\bf Proof of Theorem \ref{nondeg2}}]
We will switch for technical reasons to another representation of
the Poisson random measure.
 Let
$\overline{\mathfrak{A}}=(\bar\Omega,\bar\CF,\bar\FF,\bar\PP)$ be
a filtered probability space with filtration
$\bar\FF=(\bar\CF_t)_{t\ge 0}$ and let $\mu$ be a Poisson random
measure on $\CH$ over $\overline{\mathfrak{A}}$ having intensity
measure $\lambda$ (Lebesgue measure).
The compensator of $\mu$ is denoted by $\gamma $ and given by
$$\CB(\CH)\times\CB([0,\infty))\ni A\times I\mapsto \gamma ( A\times I) := \lambda (A)\,\lambda(I).
$$
%
%
Let \DEQSZ\label{trans-cc} c:\RR ^+ \ni r \mapsto  \sup_{\rho>0}
\lk\{\int_\rho^\infty k(s)\, ds \ge r \rk\} \, &\mbox{ if } r>0.
\EEQSZ
\begin{remark}
Observe that Hypothesis \ref{nondeg1} 
implies that there exists a number $r_1>0$ and a constant
$\delta_0$ such that for all $r\ge r_1$
\end{remark}
 \DEQSZ\label{condition-derivative-c} {c(r)  }
= \delta_0 r ^ {-\frac1\alpha},\quad &\Aand & \quad {c(-r)
 } =-\delta_0 r ^ {-\frac 1\alpha}.
 \EEQSZ
A short calculation shows that the distributions of  $L=\{ L(t):0\le t<\infty\}$ and $L^c=\{ L^c(t):0\le t<\infty\}$ are equal, 
where 
$$
L (t) := \int_0 ^t \int_\CH z \,\tilde \eta(dz,ds) ,\quad t\ge 0,
$$
and 
$$
L ^c(t) := \int_0 ^t \int_\CH  c\lk(z\rk)\,\tilde \mu(dz,ds)
,\quad t\ge 0.
$$
Now, the stochastic evolution equation given in  \eqref{eq-0ohnev}
reads as follows \DEQSZ\label{eqn-00-mc} \lk\{ \barray du(t,x) &=&
Au(t,x) + \int_\CH B  \, c(z)
 \tilde \mu(dz,ds), \\
u(0,x) &=& x\in H. \earray\rk. \EEQSZ

\del{Let $u_{p}$ the solution to
 \DEQS \lk\{ \barray du_p(t,x) &=&
Au_p(t,x) + \int_\CH B  \sigma_p( t)\, c(z)
 \tilde \mu(dz,ds), \\
u_p(0,x) &=& x\in H, \earray\rk. \EEQS
where $\sigma_p$ is a real valued progressively measurable process. Then,
\DEQS
\EE

Since $u$ is $\bar \FB$--adapted, the process $\RR^+_0\ni t\mapsto \sigma(u(t))$ is progressively measureable and
there exists a predictable process $\sigma_p$ such that $\EE\int_0^T \lk|\sigma(u(s))-\sigma_p(s)\rk|^2\, ds\le \frac \delta 4$.
}

Fix $\delta>0$, $T>0$ and $x\in \CR_H(C)$. In order to prove Lemma
\ref{nondeg2} we need a result from control theory.
Given $v\in L^2([0,\infty);\CH )$, let  $\uc$ be the solution to
(see system \eqref{eq-0-c}) \DEQSZ\label{normal-system}\lk\{
\barray d\uc (t,x,v) &=&  A \uc (t,x,v)  dt + Bv(t)\, dt  , \quad
t\ge 0,
\\
\uc(0,x,v) &=& x. \earray\rk. \EEQSZ Since the system
\eqref{normal-system} is approximate null controllable,
%
there exists 
$v\in L^2([0,\infty);\CH )$  
such that 
\DEQSZ\label{spaeter} | \uc(T , x,v) |\le \frac \delta3 . \EEQSZ
%
%
Choose $R\ge r_1$ such that
$$
\lk(\frac 3\delta\rk)^2 \,C\,\TT  R^{1-\frac 2\alpha}\le \frac 12,
$$
\del{$$ \vardelta ^{-(1+\alpha +\alpha \beta)/\alpha} \lk[ \int_0
^{\TT } \lk| v(s)  \rk|\, ds + \TT \vardelta ^{1-\frac
1\alpha}\rk] +  C\, \vardelta ^{\frac 12-\frac 1\alpha}\le \frac
12
$$}
(here, $C$ is a generic constant, not depending on $\delta$, $\TT$
and $R$, see \eqref{hereC}) and put \DEQSZ\label{diff-app}
 g_\ww = \int_{\CR_\CH(\ww )}  c(z) \lambda (dz) 
 . 
\EEQSZ
Let $\theta:\Omega\times [0,\TT ]$ be a predictable transformation
of $\CH $ such that
$$
{v(s)}  + {g_\ww } = \int_{\CH \setminus  B_\CH(\vardelta)} \lk[ c(z)
- c(\theta(s,z))\rk] \lambda (dz),\quad s\in[0,\TT ].
$$
The existence of such a transformation is given by Lemma
\ref{trans_ex}. 
Let $\mu_\theta$ the following random measure defined by
$$
\mu_\theta:\CB(\CH)\times \CB([0,T])\ni(A\times I) \mapsto
  \int_{\RR^2}\int_I
1_A(\theta(s,z))\mu(dz,ds).
$$
Let $\QQ$ be the probability  measure on $\mathfrak{A}$ 
such that $\mu_\theta$ has compensator $\gamma $. Then, the
process $u_\mu^\theta $ defined by
\DEQSZ\label{eqn-0theta} \lk\{
\barray d u^\theta_\mu(t,x) &=& A u^\theta_\mu(t,x) dt+
\int_\CH B c(z)\, ( \mu_\theta -\gamma ) (dz,dt) \\
u^\theta_\mu(0,x) &=& x. \earray \rk. \EEQSZ has under $\QQ$ the
same law as $u$, in particular
$$
\EE^\PP \lk[ 1_{[0,\delta]}(|u(\TT ,x_0)|)\rk] = \EE^\QQ \lk[
1_{[0,\delta]}(|u^\theta_\mu(\TT ,x_0)|)\rk] .
$$
Due to Lemma \ref{density} the  density process
$\CG_\theta(t)={d\QQ^\theta_t\over d\bar \PP_t}$ satisfy the
following stochastic differential equation (see also page
\pageref{asdonebefroe})
\DEQS
 \lk\{\barray d\CG_\theta (t) &=& \CG_\theta(t)\lk(  \rho_z(  \kappa(|
v(s) |),z ) ) \sgn( v(s )) \rk)\,(\mu-\gamma )(dz,ds)
\\ \CG_\theta(0)&=& 1.
\earray\rk. \EEQS
Here $\rho$ is defined in \eqref{v0s} and $\kappa$ is the inverse of $\wt$ and defined on page \pageref{def-kappa}.
By  the choice of $\rho$, we know that $\CG$ is of finite variation and 
we obtain
 for $0\le t\le T$
\DEQS {\EE^{\bar \PP}}    \sup_{0\le s\le t} |\CG_\theta(s)|
  & \le &1 + {\EE^{\bar \PP}} \int_0 ^ t \int_{\RR^+}  \,|\CG _\theta
(s-)|   |\rho_z( \kappa (| v(s)| ),z) | \,dz\, ds
  , \EEQS
  respective, we obtain for $0\le r_1\le T$
\DEQS {\EE^{\bar \PP}}    \sup_{0\le s\le r_1} |\CG_\theta(s)|
  & \le &1 + {\EE^{\bar \PP}} \int_0 ^ {r_1} \int_{\RR^+}  \,|\CG _\theta
(s-)|   |\rho_z( \kappa (| v(s)| ),z) | \,dz\, ds
  \\
  &\le & 1 + {\EE^{\bar \PP}} \sup_{0\le s\le r_1} |\CG _\theta
(s)| \times
  \int_0 ^ {r_1} \int_{\RR^+}    |\rho_z( \kappa (| v(s)| ),z) | \,dz\, ds.
   \EEQS
By Corollary \ref{theta-ex} we have
\DEQS {\EE^{\bar \PP}}    \sup_{0\le s\le r_1} |\CG_\theta(s)|
  & \le & 1 + {\EE^{\bar \PP}} \sup_{0\le s\le r_1} |\CG _\theta
(s)| \times
  \int_0 ^ {r_1}    | v(s)|^2 \, ds.
   \EEQS
Now, if $ \int_0 ^ {r_1}    | v(s)|^2 \, ds\le \frac 12$, we get
\DEQS {\EE^{\bar \PP}}    \sup_{0\le s\le r_1} |\CG_\theta(s)|
  & \le & 1\times (\frac 12)^{-1}.
   \EEQS
\del{In  fact, one can even show  for $0\le t\le T$ 
\DEQS  {\EE^{\bar \PP}}  \sup_{r\le s\le t}| \CG _\theta (s)| &\le
& {\EE^{\bar \PP}}  \sup_{0\le s\le r}| \CG _\theta (s)| +
C(r_1)\, {\EE^{\bar \PP}} \lk[ \sup_{r\le s\le t}| \CG _\theta
(s)|\rk]
 \lk(  
 \int_r ^ t  |  v(s) |^2 \, ds \rk) .
\EEQS}
Since $v\in L^ 2([0,T];\RR)$, there exists a partition $\{t_j:
1\le j\le N\}$ of $[0,T]$ with
$$\int_{t_{j-1}}^{t_j}|v(s)|^ 2 \,
ds<\frac 1{2},\quad j=2,\ldots,N,
$$
and $N\le [2|v|_{L^2([0,T];\RR)}^2]+1$.
Therefore,  \DEQS  {\EE^{\bar \PP}}  \sup_{0\le s\le t_j}| \CG
_\theta (s)| &\le & \lk(\frac 12\rk) ^{-j},
\EEQS
and we can conclude that there exists a constant $C>0$
%
\DEQSZ\label{density11} {\EE^{\bar \PP}} \sup_{0\le s\le T}
|\CG_\theta(s) |&\le & C\, 2^{|v|^2_{L^2([0,T];\RR)}}  .
 \EEQSZ

On the other hand we know that under $\bar\PP$  the process
$u^\theta_\mu$ follows the following differential equation
\del{\DEQSZ\label{eqn-0c}
\lk\{ \barray    
d u^\theta_\mu(t,x) &=& A u^\theta_\mu(t,x) dt+ \int_\CH B\lk[ c(\theta(t,z))-c(z)\rk] \gamma (dz,dt)  
\\&&{} + \int_\CH  Bc(\theta(t,z)) ( \mu-\gamma ) (dz,dt),
\\ u^\theta_\mu(0,x) &=& x .
\earray\rk. \EEQSZ} \DEQSZ\label{eqn-0c}
\\
\nonumber
\lk\{ \barray    
d u^\theta_\mu(t,x) 
&=&A u^\theta_\mu(t,x) dt+ \int_\CH B\lk[ c(\theta(t,z))-c(z)\rk] (\mu-\gamma ) (dz,dt)  
\\&&{} +\int_\CH  B\lk[c(\theta(t,z))-c(z)\rk]\gamma(dz,dt)+
\int_\CH B c(z) ( \mu-\gamma ) (dz,dt),
\\ u^\theta_\mu(0,x) &=& x .
\earray\rk. \EEQSZ
Hence we can write
\DEQS
 \bar \PP\lk( |u(\TT ,x)|\le \delta\rk)=\QQ( |u^\theta_\mu(\TT ,x)|\le \delta) = \EE^\QQ \lk[1_{|u_\mu^\theta (\TT ,x)|\le \delta}\rk]
= \EE^{\bar \PP}\lk[   \CG_\theta(\TT ) 
1_{|u_\mu^\theta (\TT ,x)|\le \delta}\rk] . \EEQS \del{\coma{The
processes $u$ and $u^\theta_\mu$ have the same law, therefore the
above equation should be replaced by } \red{\DEQS
  \PP\lk( |u(\TT ,x)|\le \delta\rk)=\QQ( |u^\theta_\mu(\TT ,x)|\le \delta) = \EE^\QQ \lk[1_{|u_\mu^\theta (\TT ,x)|\le \delta}\rk]
= \EE^{\bar \PP}\lk[   \CG_\theta(\TT ) 
1_{|u_\mu^\theta (\TT ,x)|\le \delta}\rk] . \EEQS}}
By the inverse H\"older inequality we get  
\DEQS
 \bar {\PP}\lk( |u(\TT ,x)|\le \delta\rk)
 &\ge &{ \EE^{{\bar \PP}}\lk[1_{|u_\mu^\theta (\TT ,x)|\le \delta}\rk]\over \EE^{{\bar \PP}}\lk[  \CG_\theta(\TT ) \rk]}.
\EEQS
The denominator, i.e.\ $\EE^{\bar \PP}\lk[ |\CG_\theta(\TT )
|\rk]$, is bounded. In particular, we have by  \eqref{density11}
and Hypothesis \ref{hypo3} that
$$
\EE^{\bar \PP}\lk[  |\CG_\theta(\TT ) |\rk]
 \le C'\exp\lk(r_1^ { \alpha \beta_1+1\over \alpha} \int_0^T |v(s)|^2 ds\rk).
$$
Next, we handle the numerator. Observe that by \eqref{spaeter}
\DEQSZ\nonumber
\lqq{ \EE^{\bar \PP}\lk[1_{|u_\mu^\theta (\TT ,x)|\le \delta}\rk] }&&\\
&\ge &{\EE^{\bar\PP}}\lk[1_{|\uc (\TT ,x,v)|\le \frac \delta
3}\,\,1_{|\uc (\TT ,x,v)- u_{\mu}^\theta (\TT ,x)   |\le \frac
\delta3} \rk] = \EE^{\bar \PP}\lk[1_{|\uc (\TT ,x,v)-
u_{\mu}^\theta (\TT ,x)   |\le \frac \delta3} \,\rk]
.\label{difference-1} \EEQSZ
Rewriting the difference $\Delta(\TT )=
u_{\mu}^\theta (\TT ,x) -\uc (\TT ,x,v) $ as follows \DEQS \lqq{ \Delta(\TT ) =
\int_0^{\TT }  \int_\CH  e^{-(t-s)A}B \lk[ c(\theta(t,z))- c(z)
\rk]  (\mu-\gamma ) (dz,ds)}
&&\\
&&{} + \int_0^{\TT }  \int_{\CH} e^{-(t-s)A}B c(z)\, (\mu-\gamma )
(dz,ds)
 + \int_0^{\TT }e^{-(t-s)A}Bg_R \, ds
\\
&=&
 \int_0^{\TT }  \int_\CH  e^{-(t-s)A}B c(\theta(t,z))  (\mu-\gamma ) (dz,ds)\\
&&{}
 + \int_0^{\TT }e^{-(t-s)A}Bg_R \, ds.
\EEQS
we have \DEQSZ\nonumber \EE^{\bar \PP}\lk[1_{|u_\mu^\theta (\TT
,x)|\le \delta}\rk] &\ge &
 \EE^{\bar \PP}\lk[1_{|\Delta(\TT ) 
    |\le \frac \delta3}
\,\rk]. \label{difference_21} \EEQSZ
To give a lower  estimate of   $\bar \PP\lk( |\Delta(\TT )|\le
\frac \delta3\rk)$ we apply the Bayes Theorem and get
\DEQS && \bar \PP\lk( |\Delta(\TT )|\le \frac \delta3\rk)= \bar
\PP \lk( \mu(   B_\CH(\ww ) \times [0,\TT ])=0\rk)
\\ &\times &\bar \PP
\lk(  \lk|\int_0^{T_\vardelta} \, \int_{ \CH\setminus \CR_\CH(\ww
)} e^{-(t-s)A}Bc(\theta(s,z))(\mu-\gamma ) (dz,ds) \rk.\rk.
\\
&&+\underbrace{ \int_0^{T_\vardelta} \, \int_{  B_\CH(\ww )}
e^{-(t-s)A}B c(\theta(s,z)) (\mu-\gamma )(dz,ds)} _{=
\int_0^{T_\vardelta}   e^{-(t-s)A}B g_R \, ds}
\\ && - \int_0^{T_\vardelta}   e^{-(t-s)A}B g_R \, ds
\Bigg|\le {\delta\over 3 }\,\, \,\,\Big|\,\, \mu( \CR_\CH (\ww )
\times [0,\TT ])=0\Bigg)
\\
&+& \bar  \PP \lk( \mu( \CR_\CH(\ww ) \times [0,\TT ])>0\rk)
\\ &\times &\bar \PP
\lk(  \lk|\int_0^{T_\vardelta} \, \int_{\CH} e^{-(t-s)A}B
c(\theta(s,z))(\mu-\gamma )(dz,ds) \rk.\rk.
\\ &&\lk.\lk. -  \int_0^{T_\vardelta}   e^{-(t-s)A}B g_R  \, ds
\rk|\le {\delta\over 3 }\,\,\Big|\,\, \mu(  \CR_\CH (\ww ) \times
[0,\TT ])>0\rk)
\\
&\ge & \bar  \PP \lk( \mu(   \CR_\CH(\ww ) \times [0,\TT ])=0\rk)
\\ &\times &\bar \PP
\Bigg(  \lk|\int_0^{T_\vardelta} \, \int_{ \CH\setminus
\CR_\CH(\ww )} e^{-(t-s)A}B c(\theta(s,z))(\mu-\gamma )(dz,ds)\rk|
\le {\delta\over 3 }\,\,\Big|\,\, \mu(  \CR_\CH (\ww ) \times
[0,\TT ])=0\Bigg) . \EEQS
By the Chebyscheff inequality we know that
\DEQS && \bar \PP \lk(\lk| \int_0^{T_\vardelta} \, \int_{
\CH\setminus \CR_\CH(\ww )} e^{-(t-s)A}B c(\theta(s,z))
(\mu-\gamma )(dz,ds) \rk|\ge {\delta\over 3}\,\,\Big|\,\, \mu(
\CR_\CH(\ww ) \times [0,\TT ])=0\rk)
\\ \lqq{ \le \lk(\frac 3\delta\rk)^2 }
&&\\&\times &
 \EE^{\bar \PP}\lk[  \lk| \int_0^{T_\vardelta} \, \int_{ \CH\setminus \CR_\CH(\ww )} e^{-(t-s)A}Bc(\theta(s,z))(\mu-\gamma )(dz,ds)
\rk|^2\,\,\Big|\,\, \mu( \CR_\CH (\ww ) \times [0,\TT ])=0\rk].
\EEQS
Note that due to the fact that the random variables $\mu(
\CR_\CH(\ww ) \times [0,\TT ])$ and $\mu( A \times [0,\TT ])$ for
all $A\in \CB(\CH\setminus \CR_\CH(\ww ))$ are independent, we get
\DEQS &&\EE^{\bar \PP}\lk[  \lk| \int_0^{T_\vardelta} \, \int_{
\CH\setminus \CR_\CH(\ww )} e^{-(t-s)A}Bc(\theta(s,z))(\mu-\gamma
)(dz,ds) \rk|\,\,\Big|^2 \,\, \mu(  B_\CH(\ww ) \times [0,\TT
])=0\rk]
\\ &=&
\EE ^{\bar \PP}   \lk| \int_0^{T_\vardelta} \, \int_{ \CH\setminus
\CR_\CH(\ww )} e^{-(t-s)A}B c(\theta(s,z))(\mu-\gamma )(dz,ds)
\,\rk| ^2 . \EEQS
The Burkholder inequality and the fact that $c(\theta(t,z))\le c(z)$ give 
\DEQS && \EE^{\bar \PP}\lk[  \lk|\int_0^{\TT } \, \int_{
\CH\setminus \CR_\CH(\ww )} e^{-(t-s)A}Bc(\theta(s,z))(\mu-\gamma
)(dz,ds)\rk|^2 \rk]
\\
&\le & \EE^{\bar \PP}\lk[  \int_0^{\TT } \, \int_{ \CH\setminus
\CR_\CH(\ww )} \lk| e^{-(t-s)A}Bc(\theta(s,z))\rk|^ 2 \lambda \,
ds\rk]
\\
&\le & \EE^{\bar \PP}\lk[  \int_0^{\TT } \, \int_{ \CH\setminus
\CR_\CH(\ww )} \lk| e^{-(t-s)A}Bc(z)\rk|^ 2 \lambda \, ds\rk] \le
C\,\TT  \ww  ^{1-\frac 2\alpha}.
 \EEQS
 Therefore, collecting all together
  \DEQS \EE^{\bar \PP}\lk[1_{|\uc (\TT
,x,v)- u_{\mu}^\theta (\TT ,x)   |\le \frac \delta3}\rk]&\ge&
\frac{1}{C(R)}\lk(1-\lk(\frac 3\delta\rk)^2 \,C\,\TT  \ww  ^{1-\frac
2\alpha}\rk). \EEQS
Since $R$ is chosen in such a way that \DEQSZ\label{hereC}
\lk(\frac 3\delta\rk)^2 \,C  \,\TT \ww ^{1-\frac 2\alpha}\le \frac
12 \EEQSZ and using the fact that
$$
\bar \PP \lk( \mu(  \CR_\CH(\ww )  \times [0,\TT ])=0\rk) =
e^{-\lambda (\CR_\CH(\ww ))\TT  }
$$
we get
\DEQS 
\lqq{ \EE^{\bar \PP}\lk[1_{|u_\mu^\theta (\TT ,x)|\le \delta}\rk]
\ge \EE^{\bar \PP}\lk[1_{|\Delta(\TT )   |\le \frac \delta3}
\,\rk]\ge \bar \PP \lk( \mu(  \CR_\CH(\ww )  \times [0,\TT
])=0\rk)
}&&\\
 &&\times \Bigg( 1-
\bar \PP \lk(  \lk|\int_0^{\TT } \, \int_{\CH\setminus \CR_\CH(\ww
)} e^{-(t-s)A}Bc(\theta(s,z)) (\mu-\gamma )(dz,ds) \rk|\ge
{\delta\over 3}\,\,\Big|\,\, \mu(  \CR_\CH(\ww ) \times [0,\TT
])=0\rk)
 \Bigg)
 \\
 &\ge & e^{-\lambda (\CR_\CH(\ww ))\TT  } C(R)\lk(  1-\lk(\frac 3\delta\rk)^2 \,C\, \TT  \ww  ^{1-\frac 2\alpha}\rk)
 \ge \frac 12 \,\, e^{-\lambda (\CR_\CH(\ww ))\TT  } C(R).
 \EEQS
Hence, 
 we have shown that
\DEQSZ\label{ending}
 \bar {\PP}\lk( |u(\TT ,x)|\le \delta\rk) \ge { \frac 12 \,\, e^{-\lambda (\CR_\CH(\ww ))\TT  } } C(R) \exp(-|v|_{L^2 ([0,T];\RR)}^2 ),
\EEQSZ
which gives the assertion.
\end{proof} 

\begin{proof}[{\bf Proof of Theorem \ref{nondeg22}}]
%
%
Let $x\in\CB_H(C)$ and  $u^c$ the solution to
\DEQSZ\label{eq-0-c-1} \lk\{ \barray \dot u^c(t,x,v) &=& A \uc
(t,x,v_x)+ B v_x(t),\quad t\ge 0,
\\
\uc (0,x,v_x)&=& x, \earray\rk. \EEQSZ
In order to show Theorem \eqref{nondeg22}, we have to show that the RHS of Inequality \eqref{ending}
can be estimated from below for all $x\in\CB_H(\delta)$.
That is, we have to show that for any $x\in\CR
_H(C)$
there exists a control $v_x\in L^2([0,T];\RR)$ and a constant $K>0$  such that
$u^c(T,x,v_x)=0$ and
$$\|v_x\|_{L^2([0,T];\RR)}\le K.
$$
 However, this
constant is given by continuity properties of the system
\eqref{eq-0-c-1}.
To be more precise,
since \eqref{eq-0-c-1} is exactly controllable the mapping
$\Phi_T: L^2(0,T; \RR) \rightarrow H$ defined by
$$\Phi_T(v)=\int_0^T e^{-(T-s)A}B v(s) ds $$ is  invertible. Furthermore, it is bounded thanks to our assumptions on the semigroup generated by $A$ and on the operator $B$.
Hence,
its inverse $\Phi_T^{-1}$ is also a bounded  operator. Let
$x\in H$ and $y\in H$, we have $$u^c(T,x, v_x
)=y=S(T)x+\Phi_T(v_x).$$ From this identity we infer that \begin{align*}
\int_0^T |v_x(s)|^2 ds\le \lVert \Phi_T^{-1}\rVert \cdot |u^c(T,x,
v_x)-S(T)x|\\
\le \lVert \Phi_T^{-1}\rVert |y-S(T)x|.\end{align*}
\end{proof}

\begin{proof}[{\bf Proof of Theorem \ref{nondeg2cad}}]
The proof starts by the same consideration as the proof of Theorem \ref{nondeg22}.
We list here only the points where the proof differs.

First, choose $R\ge r_1$ such that
\DEQSZ\label{RK-chosen}
\lk(\frac 3\delta\rk)^2 \,C\,\TT  R^{1-\frac 2\alpha}\, C_\sigma(1+K)\, \le \frac 12,
\EEQSZ
where $K:=\int_0^T \EE |u(s)|_H^2 \, ds$. By the assumptions on $B$ and $\sigma$,
$K$ is finite.
The next difference is that one has to find a predictable transformation
 $\theta:\Omega\times [0,\TT ]\rightarrow \CH $ such that
$$
{v(s)\over \sigma(u(s-))}  + {g_\ww } = \int_{\CH \setminus  B_\CH(\vardelta)} \lk[ c(z)
- c(\theta(s,z))\rk] \lambda (dz),\quad s\in[0,\TT ].
$$
Again, the existence of such a transformation is given by Lemma
\ref{trans_ex}. 

Let $\mu_\theta$ the following random measure defined by
$$
\mu_\theta:\CB(\CH)\times \CB([0,T])\ni(A\times I) \mapsto
  \int_{\RR^2}\int_I
1_A(\theta(s,z))\mu(dz,ds).
$$
Again, let $\QQ$ be the probability  measure on $\mathfrak{A}$ 
such that $\mu_\theta$ has compensator $\gamma $. Then, the
process $u_\mu^\theta $ defined by
\DEQSZ\label{eqn-0theta1} \lk\{
\barray d u^\theta_\mu(t,x) &=& A u^\theta_\mu(t,x) dt+
\int_\CH B c(z)\, \sigma( u(s-))\, ( \mu_\theta -\gamma ) (dz,dt) \\
u^\theta_\mu(0,x) &=& x. \earray \rk. \EEQSZ has under $\QQ$ the
same law as $u$, in particular
$$
\EE^\PP \lk[ 1_{[0,\delta]}(|u(\TT ,x_0)|)\rk] = \EE^\QQ \lk[
1_{[0,\delta]}(|u^\theta_\mu(\TT ,x_0)|)\rk] .
$$
Setting
$$v_\sigma(t):= {v(s)\over \sigma(u(s-))}
$$
again, due to Lemma \ref{density} the  density process
$\CG_\theta(t)={d\QQ^\theta_t\over d\bar \PP_t}$ satisfy the
following stochastic differential equation
\DEQS
 \lk\{\barray d\CG_\theta (t) &=& \CG_\theta(t)\lk(  \rho_z(  \kappa(|
v_\sigma(s) |),z ) ) \sgn( v_\sigma(s )) \rk)\,(\mu-\gamma )(dz,ds)
\\ \CG_\theta(0)&=& 1.
\earray\rk. \EEQS
Similarly, by Corollary \ref{theta-ex} we have
\DEQS {\EE^{\bar \PP}}    \sup_{0\le s\le r_1} |\CG_\theta(s)|
  & \le & 1 + {\EE^{\bar \PP}} \sup_{0\le s\le r_1} |\CG _\theta
(s)| \times
  \int_0 ^ {r_1}    | v_\sigma(s)|^2 \, ds.
   \EEQS
Now, if $ \int_0 ^ {r_1}    | v_\sigma(s)|^2 \, ds\le \frac 12$, we get
\DEQS {\EE^{\bar \PP}}    \sup_{0\le s\le r_1} |\CG_\theta(s)|
  & \le & 1\times (\frac 12)^{-1}.
   \EEQS
Since $v\in L^ 2([0,T];\RR)$ and $\sigma$ is bounded from below by  $C_\sigma$ we know
there exists a constant $C_v=  |v|_{ L^ 2([0,T];\RR)}/C_\sigma>0$ such that $\bar \PP$--a.s.\
$$ |v_\sigma|_{ L^ 2([0,T];\RR)}\le C_v.
$$
Arguing as before,
we  conclude that there exists a constant $C>0$
%
\DEQSZ\label{density11} {\EE^{\bar \PP}} \sup_{0\le s\le T}
|\CG_\theta(s) |&\le & C\, 2^{|v_\sigma|^2_{L^2([0,T];\RR)}}\sim  C\, 2^{C_v}   .
 \EEQSZ

On the other hand we know that under $\bar\PP$  the process
$u^\theta_\mu$ follows the following differential equation
\DEQSZ\label{eqn-0cc}
\\
\nonumber
\lk\{ \barray    
d u^\theta_\mu(t,x) 
&=&A u^\theta_\mu(t,x) dt+ \int_\CH B\sigma(u^\theta_\mu(t-,x)) \lk[ c(\theta(t,z))-c(z)\rk] (\mu-\gamma ) (dz,dt)  
\\&&{}\hspace{-2cm}
 +\int_\CH  B\sigma(u^\theta_\mu(t-,x))\lk[c(\theta(t,z))-c(z)\rk]\gamma(dz,dt)+
\int_\CH B \sigma(u^\theta_\mu(t-,x))c(z) ( \mu-\gamma ) (dz,dt),
\\ u^\theta_\mu(0,x) &=& x .
\earray\rk. \EEQSZ
Hence we can again write
\DEQS
 \bar \PP\lk( |u(\TT ,x)|\le \delta\rk)=\QQ( |u^\theta_\mu(\TT ,x)|\le \delta) = \EE^\QQ \lk[1_{|u_\mu^\theta (\TT ,x)|\le \delta}\rk]
= \EE^{\bar \PP}\lk[   \CG_\theta(\TT ) 
1_{|u_\mu^\theta (\TT ,x)|\le \delta}\rk] . \EEQS \del{\coma{The
processes $u$ and $u^\theta_\mu$ have the same law, therefore the
above equation should be replaced by } \red{\DEQS
  \PP\lk( |u(\TT ,x)|\le \delta\rk)=\QQ( |u^\theta_\mu(\TT ,x)|\le \delta) = \EE^\QQ \lk[1_{|u_\mu^\theta (\TT ,x)|\le \delta}\rk]
= \EE^{\bar \PP}\lk[   \CG_\theta(\TT ) 
1_{|u_\mu^\theta (\TT ,x)|\le \delta}\rk] . \EEQS}}
By the inverse H\"older inequality we get  
\DEQS
 \bar {\PP}\lk( |u(\TT ,x)|\le \delta\rk)
 &\ge &{ \EE^{{\bar \PP}}\lk[1_{|u_\mu^\theta (\TT ,x)|\le \delta}\rk]\over \EE^{{\bar \PP}}\lk[  \CG_\theta(\TT ) \rk]}.
\EEQS
Again, the denominator, i.e.\ $\EE^{\bar \PP}\lk[ |\CG_\theta(\TT )
|\rk]$, is bounded. In particular, we have by the assumption on $\sigma$,  \eqref{density11}
and Hypothesis \ref{hypo3} that
$$
\EE^{\bar \PP}\lk[  |\CG_\theta(\TT ) |\rk]
 \le {C'\over C_\sigma} \, \exp\lk(r_1^ { \alpha \beta_1+1\over \alpha} \int_0^T |v(s)|^2 ds\rk).
$$
The next steps are similar to the lines of the proof of Theorem \ref{nondeg22}.
To be more precise, we obtain first
\DEQSZ\nonumber
\lqq{ \EE^{\bar \PP}\lk[1_{|u_\mu^\theta (\TT ,x)|\le \delta}\rk] }&&\\
&\ge &{\EE^{\bar\PP}}\lk[1_{|\uc (\TT ,x,v)|\le \frac \delta
3}\,\,1_{|\uc (\TT ,x,v)- u_{\mu}^\theta (\TT ,x)   |\le \frac
\delta3} \rk] = \EE^{\bar \PP}\lk[1_{|\uc (\TT ,x,v)-
u_{\mu}^\theta (\TT ,x)   |\le \frac \delta3} \,\rk]
.\label{difference-2} \EEQSZ
Again, rewriting the difference $\Delta(\TT )=
u_{\mu}^\theta (\TT ,x)  -\uc (\TT ,x,v)$ as follows
\DEQS \lqq{ \Delta(\TT ) =
 \int_0^{\TT }  \int_\CH  e^{-(t-s)A}B\sigma(u(s-)) c(\theta(t,z))  (\mu-\gamma ) (dz,ds)}
 &&\\
&&{}
 + \int_0^{\TT }e^{-(t-s)A}B\sigma(u(s-)) g_R \, ds,
\EEQS
we have \DEQSZ\nonumber \EE^{\bar \PP}\lk[1_{|u_\mu^\theta (\TT
,x)|\le \delta}\rk] &\ge &
 \EE^{\bar \PP}\lk[1_{|\Delta(\TT ) 
    |\le \frac \delta3}
\,\rk]. \label{difference_22} \EEQSZ
Continuing as before, we get
%
\DEQS \lqq{ \bar \PP\lk( |\Delta(\TT )|\le \frac \delta3\rk)
\ge  \bar  \PP \lk( \mu(   \CR_\CH(\ww ) \times [0,\TT ])=0\rk)} &&
\\ &\times &\bar \PP
\Bigg(  \lk|\int_0^{T_\vardelta} \, \int_{ \CH\setminus
\CR_\CH(\ww )} e^{-(t-s)A}B \sigma(u(s-))c(\theta(s,z))(\mu-\gamma )(dz,ds)\rk|
\le {\delta\over 3 }\,\,\Big|\,\, \mu(  \CR_\CH (\ww ) \times
[0,\TT ])=0\Bigg)
\\ &\ge &  \bar  \PP \lk( \mu(   \CR_\CH(\ww ) \times [0,\TT ])=0\rk)\times  \Bigg[ 1-
\\ &&\bar \PP
\Bigg(  \lk|\int_0^{T_\vardelta} \, \int_{ \CH\setminus
\CR_\CH(\ww )} e^{-(t-s)A}B \sigma(u(s-))c(\theta(s,z))(\mu-\gamma )(dz,ds)\rk|
> {\delta\over 3 }\,\,\Big|\,\, \mu(  \CR_\CH (\ww ) \times
[0,\TT ])=0\Bigg) \Bigg]
\\
 &:= &  \bar  \PP \lk( \mu(   \CR_\CH(\ww ) \times [0,\TT ])=0\rk)\times  \lk[ 1- I(T)\rk].
 \EEQS
If we can show, that $I(T)\le \frac 12$, we are done.
Again, we apply the  Chebyscheff inequality to estimate $I(T)$ and use the fact that the random variables $\mu(
\CR_\CH(\ww ) \times [0,\TT ])$ and $\mu( A \times [0,\TT ])$ for
all $A\in \CB(\CH\setminus \CR_\CH(\ww ))$ are independent. Doing so we obtain
\DEQS I(T)
 &\le &\frac 3 \delta \times
\EE ^{\bar \PP}   \lk| \int_0^{T_\vardelta} \, \int_{ \CH\setminus
\CR_\CH(\ww )} e^{-(t-s)A}B\sigma(u(s-)) c(\theta(s,z))(\mu-\gamma )(dz,ds)
\,\rk| ^2 . \EEQS
The Burkholder inequality, the fact that $c(\theta(t,z))\le c(z)$ and the linear grow condition on $\sigma$ give 
\DEQS
 I(T)
 &\le &\frac 3 \delta \times
 \EE^{\bar \PP}\lk[  \int_0^{\TT } \, \int_{ \CH\setminus
\CR_\CH(\ww )} \lk| e^{-(t-s)A}B\sigma(u(s-)) c(z)\rk|^ 2 \lambda \, ds\rk]
\\
&\le &C\, \frac 3 \delta \times
\TT  \ww  ^{1-\frac 2\alpha}C_\sigma \lk( 1+ \EE ^{\bar \PP} \int_0^{\TT } \lk| u(s)\rk|^2 \, ds \rk) .
 \EEQS
 Therefore, collecting all together
  \DEQS \EE^{\bar \PP}\lk[1_{|\uc (\TT
,x,v)- u_{\mu}^\theta (\TT ,x)   |\le \frac \delta3}\rk]&\ge&
\frac{1}{C(R)}\lk(1-\lk(\frac 3\delta\rk)^2 \,C\,\TT  \ww  ^{1-\frac
2\alpha} C_\sigma (1+K) \rk). \EEQS
Since $R$ satisfies \eqref{RK-chosen}
we get
\DEQS 
 \EE^{\bar \PP}\lk[1_{|u_\mu^\theta (\TT ,x)|\le \delta}\rk]
&\ge &
 \frac 12 \,\, e^{-\lambda (\CR_\CH(\ww ))\TT  } C(R).
 \EEQS
Hence, 
 we have shown that
\DEQSZ\label{ending2}
 \bar {\PP}\lk( |u(\TT ,x)|\le \delta\rk) \ge { \frac 12 \,\, e^{-\lambda (\CR_\CH(\ww ))\TT  } } C(R) \exp(-|v|_{L^2 ([0,T];\RR)}^2 ),
\EEQSZ
which gives the assertion.
\end{proof} 

\begin{proof}[{\bf Proof of Theorem \ref{nondeg2cad2}}]
%
%
Let $x\in\CB_H(C)$ and  $u^c$ the solution to
\DEQSZ\label{eq-0-c-11} \lk\{ \barray \dot u^c(t,x,v) &=& A \uc
(t,x,v_x)+ B v_x(t),\quad t\ge 0,
\\
\uc (0,x,v_x)&=& x, \earray\rk. \EEQSZ
In order to show Theorem \ref{nondeg2cad2} we use the same arguments as we have used in the proof of Theorem \ref{nondeg22}.
That means,
first, $R$ can be chosen in such a way, that
  \eqref{RK-chosen}  holds for all initial conditions $x$. Since $K$ depends linearly on $x$, this will not be a problem.
Secondly,
 we have to show that the RHS of Inequality \eqref{ending2}
can be estimated from below for all $x\in\CB_H(\delta)$.
That means, we have to show that
there exists a constant $C'>0$ such that for any $x\in\CR
_H(C)$
there exists a control $v_x\in L^2([0,T];\RR)$ such that
$u^c(T,x,v_x)=0$ and
$$\|v_x\|_{L^2([0,T];\RR)}\le C'.
$$
 However, again, this
constant is given by continuity properties of the system
\eqref{eq-0-c-11}.
\end{proof}

\section{Uniqueness of the invariant measure and the asymptotic
strong Feller property}\label{sec:3}

Let $u$ be the solution of the following stochastic evolution equation 
\DEQSZ\label{eqn-00-inv}
\lk\{ \barray du(t,x) &=& Au(t,x) + \int_\CH B \sigma( u(t,x))\, z
\tilde \eta(dz,ds), \\
u(0,x) &=& x\in H.
\earray\rk.
\EEQSZ
Let us assume in addition that $\sigma$ is bounded. In particular, there exists a $K_\sigma$ such that $|\sigma(x)|\le K_\sigma$, for all $x\in H$.
In case, $u$ is not \cadlag \ in $H$, $\sigma$ is supposed to be a constant.

If the semigroup generated by $A$ is of contractive type, i.e.\
there exists a $\omega>0$ and $M>0$ such that $\|
e^{-tA}\|_{L(H,H)}\le M e^{-\omega t}$, $t\ge 0$, then by direct
calculations the following a'priori estimate can be shown
\DEQSZ
\label{aprioribound} \EE \lk|u(t,x)\rk|^2_H \le e^{-\omega t} M\EE
|x|^2_H + M\, C(t,\gamma) \int_\CH |z|^2 \,\nu(dz)
 \EEQSZ
 where
$C(t,\gamma) =K_\sigma^2 \int_0^t e^{-\omega s} s^{-2\gamma}\, ds$. Note that
$\lim_{t\to \infty}  C(t,\gamma)<\infty$.

\del{Now, the existence of
the invariant measure can be shown by an application of the
Krylov-Bogoliubov Theorem.}

Here, the existence of the invariant measure can be shown by an
application of the Krylov-Bogoliubov Theorem (see \cite[Theorem 3.1.1]{DZ97}
). First, we will define for $T>0$ and $x\in \CCH$
the following probability measure on $(H,\CB(H))$
\DEQSZ \CB(\CCH ) \ni \Gamma \mapsto R_T(x,\Gamma) := \frac 1T
\int_0^T \, P_t(x,\Gamma)\, dt.
\EEQSZ In addition, for any $\rho\in M_1(\CCH )$, let
$R^\ast_T\rho $ be defined by
$$
\CB(\CCH ) \ni \Gamma \mapsto \int_\CCH
R_T(x,\Gamma)\,\rho(dx).
$$
Corollary 3.1.2 in \cite{DZ97} says, that if for some probability
measure $\rho$ on $(\CCH ,\CB(\CCH ))$ and some sequence
$T_n\uparrow \infty$, the sequence
$\{R^\ast_{T_n}\rho:n\in\NN\}$ is tight, then there exists an
invariant measure for $(\CP_t)_{t\ge 0}$.
%
That means, it is sufficient for the existence of an invariant
measure to show that for all $\ep>0$ for all $x\in\CCH $, there
exists a compactly embedded subspace $E\hookrightarrow H$, a $R>0$
such that we have for all $T>0$ \DEQSZ\label{oben}
  \frac 1T \int_0^T \,\PP(|u(t,x)|_E\ge R )\, dt<\ep.
\EEQSZ
  However, if there exists a constant $C>0$ and a number $p>0$ such that
  $$
\sup_{0\le t\le T }  \EE  |u(t,x)|^p_E\le C,\quad T\ge 0,
  $$
  then \eqref{oben} holds. 

Observe, if $A$ generates a strongly continuous semigroup
$(e^{-tA})_{t\ge 0}$ of contraction on $\CCH$,
then  
$$
\int_0^\infty \lk| e^{-tA} \rk|^2_{L(H,D(A^{-\gamma}))}  \,\,
dt<\infty
.
$$

\begin{theorem}\label{exinv}
Let $H_1$ be a Hilbert space such that $H_1\hookrightarrow H$
compactly, Assume $A$ generates a strongly continuous semigroup
$(e^{-tA})_{t\ge 0}$ of contraction on $\CCH$ and $H_1$,
 and
$$\int_0^ \infty |S(t-s)B|_{L(D(A^ {-\gamma},H_1)}^2\,ds<\infty, $$
Then, if
$$
\int_\CH |z|^2\nu(dz)<\infty, $$ and $\sigma$ is bounded,
then the Markovian semigroup
 $(\CP_t)_{t\ge
0}$ admits an invariant measure.
\end{theorem}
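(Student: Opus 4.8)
The plan is to verify the tightness hypothesis of the Krylov--Bogoliubov theorem in the form of Corollary 3.1.2 of \cite{DZ97}, following exactly the reduction carried out just before the statement. Since that criterion only requires tightness of $\{R^\ast_{T_n}\rho\}$ for \emph{some} probability measure $\rho$ and \emph{some} sequence $T_n\uparrow\infty$, I am free to start the system at a convenient point; I would take $\rho=\delta_{x_0}$ with $x_0\in H_1$ (the choice $x_0=0$ is simplest). It then suffices to produce a compactly embedded subspace $E\hookrightarrow H$, a constant $C>0$ and an exponent $p>0$ with $\sup_{t\ge 0}\EE|u(t,x_0)|_E^p\le C$: Markov's inequality gives, for every $T>0$,
\[
R_T(x_0,\{z:|z|_E\ge R\})=\frac1T\int_0^T\PP(|u(t,x_0)|_E\ge R)\,dt\le \frac{C}{R^p},
\]
and, $\{z:|z|_E\le R\}$ being relatively compact in $H$, the quantity \eqref{oben} is made smaller than any $\ep>0$ by enlarging $R$, which is precisely the desired tightness. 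I would take $E=H_1$ and $p=2$.

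The core of the proof is then the a priori bound in the stronger norm $|\cdot|_{H_1}$, the analogue of \eqref{aprioribound} with $H$ replaced by $H_1$. From the mild form
\[
u(t,x_0)=S(t)x_0+\int_0^t\int_\CH S(t-s)\,B\,\sigma(u(s-,x_0))\,z\,\tilde\eta(dz,ds),\qquad S(t)=e^{-tA},
\]
I would estimate the two terms separately. The deterministic term is harmless: contractivity of $S$ on $H_1$ gives $|S(t)x_0|_{H_1}\le Me^{-\omega t}|x_0|_{H_1}$ (and it vanishes for $x_0=0$), so that choosing $x_0\in H_1$ is exactly what prevents a blow-up of this term near $t=0$. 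For the stochastic convolution the It\^o isometry (Burkholder inequality) for Poisson stochastic integrals yields
\[
\EE\Big|\int_0^t\int_\CH S(t-s)B\,\sigma(u(s-,x_0))\,z\,\tilde\eta(dz,ds)\Big|_{H_1}^2\le \EE\int_0^t\int_\CH |S(t-s)B|^2_{L(D(A^{-\gamma}),H_1)}\,|\sigma(u(s-,x_0))|^2\,|z|^2\,\nu(dz)\,ds.
\]
Using $|\sigma|\le K_\sigma$, the finite second moment $\int_\CH|z|^2\nu(dz)<\infty$, and the standing hypothesis $\int_0^\infty|S(\tau)B|^2_{L(D(A^{-\gamma}),H_1)}\,d\tau<\infty$, the right-hand side is dominated by $K_\sigma^2\big(\int_\CH|z|^2\nu(dz)\big)\big(\int_0^\infty|S(\tau)B|^2_{L(D(A^{-\gamma}),H_1)}\,d\tau\big)$, finite and independent of $t$. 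Combining the two pieces produces $\sup_{t\ge 0}\EE|u(t,x_0)|_{H_1}^2\le C$, whence the theorem.

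The step I expect to be the main obstacle is the rigorous justification that the stochastic convolution really takes values in $H_1$ and that the It\^o isometry may be run in the $H_1$-norm, given that $B$ maps into the extrapolation space $D(A^{-\gamma})$ with $\gamma<\tfrac12$ rather than into $H$. One must control the interplay between the mild singularity $(t-s)^{-\gamma}$ created by the unboundedness of $B$ and the exponential decay of the contraction semigroup on $H_1$; this is the very mechanism making $C(t,\gamma)=K_\sigma^2\int_0^t e^{-\omega s}s^{-2\gamma}\,ds$ converge as $t\to\infty$ in \eqref{aprioribound}, and the condition $\gamma<\tfrac12$ (equivalently $2\gamma<1$) is what secures integrability of the singularity near $s=0$. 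A secondary point, already flagged in the preamble, concerns $\sigma$: in the \cadlag{} case its boundedness is all the moment bound needs, whereas in the non-\cadlag{} case $\sigma$ is taken constant so that no further integrability of the solution is invoked.
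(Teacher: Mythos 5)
Your proposal is correct and follows essentially the same route as the paper: the Krylov--Bogoliubov criterion (Corollary 3.1.2 of \cite{DZ97}) reduced to a uniform-in-time second-moment bound in the compactly embedded space $H_1$, obtained from the mild formulation via the It\^o isometry for the Poisson stochastic convolution together with the boundedness of $\sigma$, the finiteness of $\int_\CH|z|^2\,\nu(dz)$, and the square-integrability of $t\mapsto |S(t)B|_{L(D(A^{-\gamma}),H_1)}$. The only (harmless) refinement is your explicit choice $\rho=\delta_{x_0}$ with $x_0\in H_1$ to control the deterministic term, which the paper's estimate of $|e^{-tA}x|_{H_1}$ tacitly assumes anyway.
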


\begin{proof}
First, Equation \eqref{{eqn-00-inv}} can be written as follows \DEQS
\lk\{\barray du(t,x)& = &Au(t,x)\: dt + \int_\CZ B \, \sigma(u(s-))\, z\,\tilde
\eta(dz,ds) ,\quad t>0,
\\ u(0,x)& =&x.\earray\rk.
\EEQS
If $A$ is a semigroup of contractions then 
 (see \cite{levy2})
$$
\int_0^\infty \int_\CZ \lk| e^{-tA} Bz\rk|^2\nu(dz) \, dt<\infty
$$
Now,  we will show that there exists a constant $C>0$ such that
 $$ \sup_{0\le t\le T }  \EE |u(t,x)|^2_{H_1}\le
C,\quad T\ge 0.
  $$
Due to standard arguments (see \cite{levy2}) we get
 \DEQS \EE \lk| u(t,x)\rk|^2 _{H_1}&\le& C_1 \lk|e^{-tA}x\rk| _{H_1}^ 2 + C_2 \EE \lk|
\int_0^t e^{-(t-s)A}\, B\, \sigma(u(s-))\, z \nu(dz) ds\rk| _{H_1}^2
\\ &\le& C_1\lk|
e^{-tA}x_0\rk| _{H_1} ^ 2 +{}  C_2K_\sigma \EE \int_0^t
\int_\CH\lk|e^{-(t-s)A}\, Bz\rk| _{H_1}^2 \nu(dz)\, ds . \EEQS
Due  to the assumption, the first and the second summand are
bounded uniformly for all $t\ge 0$. Hence,  there exists a
constant $C>0$ such that
\DEQS\sup_{0\le t<\infty}
 \EE \lk|u(t,x)\rk|^2 _{H_1}  &\le& C.
\EEQS Now, the Chebyshev inequality leads for any $R>0$ to
\DEQS
  \frac 1T \int_0^T \,\PP(|u(t,x)| _{H_1} \ge R )\, dt & \le &  \frac 1T \int_0^T \, {\EE \lk| u(t,x)\rk| _{H_1} ^2\over R^2}\, ds
 \\\frac 1T \int_0^T \, {\EE \lk| u(t,x)\rk| _{H_1}  ^2\over R^2}\, ds &\le&
 {C\over R^2}
   .
\EEQS Given $\ep>0$ and taking $R>(\frac  C\ep)^{\frac 12}$,
inequality \eqref{oben} follows.
\end{proof}

\del{
 (see \cite{DZ97}). For details on the
Markovian semigroup and existence of the invariant measure, we
refer to the appendix \ref{existence-inv}.}

By \cite[Corollary 3.17]{mat-hai-2006} we know that if the
semigroup is asymptotically strong Feller (for the definition  of
asymptotic strong Feller, we refer to \cite{mat-hai-2006}) and
there exists a point $x\in H$ such that $x\in supp(\sigma)$,
whenever $\sigma$ is an invariant measure of the Markovian
semigroup $(\CP_t)_{t\ge 0}$, then the semigroup $(\CP_t)_{t\ge
0}$ admits at most one invariant measure.
%
%

To be more precise, assume for the time being that the semigroup
$(\CP_t)_{t\ge 0}$ is asymptotically strong Feller. Hence, it
remains to prove that there exists a $x\in H$ such that   $x\in
supp(\sigma)$. Now, the two following properties imply that $0\in
supp(\sigma)\footnote{Note, that $x\in supp(\sigma)$ iff for all
$\delta >0$, $\sigma( \CR_H(\delta))>0$.}$  whenever $\sigma$ is
an invariant measure.
 \begin{itemize}
   \item There exists a constant $C>0$ such that
\DEQSZ\label{inv-1} \inf_{\{\rho \mbox{ \small  is an invariant
measure}\}} \rho( \CR_H( C)) > 0. \EEQSZ
   \item For all $\delta>0$ and for all $x\in \CR_H(C)$ there exists a time $T_\delta>0$ and some $\kappa>0$  such that
\DEQSZ\label{inv-2} {\PP}\lk( u(T_\delta,x)\in\CR_H(\delta)\rk)\ge
\kappa. \EEQSZ
 \end{itemize}
It follows that $0\in supp(\sigma)$ by the following observations.
Since $\sigma$ is invariant we have
$$
\sigma\lk( \CR_H(\delta)\rk) \ge  \kappa \cdot \inf_{\{\rho \mbox{
\small is invariant measure}\}} \rho(\bar \CR_H( C)).
$$
Now, estimates \eqref{inv-1} and \eqref{inv-2} give the assertion.

Inequality \eqref{inv-2} can be verified by Theorem \ref{nondeg2}.
Estimate \eqref{inv-1} follows by the fact that for any invariant
measure
 $\rho$  of the semigroup $(\CP_t)_{t\ge 0}$
there exists a constant $C>0$ such that \DEQSZ\label{bound} \int_H
|u|^2_H\, \rho(du) \le C. \EEQSZ Estimate \eqref{bound}  follows
by the same lines as in \cite[Lemma B.1]{EMS} and the a priori
estimate \eqref{aprioribound}. Now, an application of the
Chebyscheff inequality leads to \eqref{inv-1}.

It remains to investigate under which conditions the semigroup is
asymptotical strong Feller. However, before continuing we
introduce a second concept of controllability. Again,  $H$ denotes
a Hilbert space, $A:H\to H$ a generator of a strongly continuous
semigroup $(e^{-tA})_{t\ge 0}$ on $H$ and $B:\CH \to H$. Then we
say that the system \DEQS \lk\{ \barray \dot u^c(t,x,v) &=& A \uc
(t,x,v)+ B v(t),\quad t\ge 0,
\\
\uc (0,x,v)&=& x, \earray\rk. \EEQS
 is {\em null controllable with vanishing energy}
(see \cite{PPZ,priola-3}), if it is null controllable and for any
$x\in H$ there exists a sequence of times $\{t_n\ge 0:n\in\NN\}$
and a sequence of controls $\{ v_n:n\in\NN\}\subset
L^2([0,t_n];\CH )$ such that $u(t_n,x,v_n)=0$ for any $n\in\NN$
and
$$
\lim_{n\to \infty} \int_0^{t_n} |v_n(s)|_\CH ^2 \, ds =0.
$$

\del{In Theorem \ref{theo-asf} we verify some conditions under
which the semigroup is asymptotically strong Feller, and, in
Theorem \ref{nondeg2} we show under which conditions the semigroup
is non degenerate. From these two theorems follows the uniqueness
of the invariant measure. In section \ref{dampedwave} we apply the
results to the damped wave equation described in Example
\ref{damped-ex}. }


\medskip

Now we are ready to present our result about the asymptotically
strong Feller property.
\begin{theorem}\label{theo-asf}
If $\alpha>1$,
and if the system  \eqref{eq-0-c} is null controllable with vanishing energy, then the Markovian semigroup of system
\eqref{eqn-00-inv} is asymptotically strong Feller.
\end{theorem}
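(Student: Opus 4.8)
The plan is to establish the gradient (Hairer--Mattingly) characterisation of the asymptotic strong Feller property. By \cite[Proposition 3.12]{mat-hai-2006} it suffices to produce a sequence $t_n\uparrow\infty$, a sequence $\delta_n\downarrow0$ and a locally bounded function $C(\cdot)$ such that, for every $\phi\in C^1_b(H)$, every $x\in H$ and every $h\in H$,
\[
\lk| \la D\CP_{t_n}\phi(x),h\ra\rk|
\le C(x)\lk( |\phi|_\infty + \delta_n\,|D\phi|_\infty\rk)\,|h|.
\]
Everything therefore reduces to this gradient bound. First I would introduce the variational process $J_{0,t}h:=\la Du(t,x),h\ra$, obtained by differentiating \eqref{eqn-00-inv} in the initial datum; since the drift is the linear term $Au$ and the jump amplitude is $B\sigma(u)z$, the process $v(t)=J_{0,t}h$ solves the linear equation
\[
dv(t)=Av(t)\,dt+\int_\CH B\,\sigma'(u(t-))\,v(t-)\,z\,\tilde\eta(dz,dt),
\qquad v(0)=h,
\]
and $\la D\CP_t\phi(x),h\ra=\EE\lk[\la D\phi(u(t)),J_{0,t}h\ra\rk]$.

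Next I would carry out a control / change-of-measure splitting, exactly of the type used in the proof of Theorem~\ref{nondeg2}. A perturbation of the noise along a Cameron--Martin direction $v\in L^2([0,t];\RR)$ is realised through the predictable reparametrisation $\theta$ of the Poisson measure $\mu$ (Lemma~\ref{trans_ex}), its cost being encoded by the density process $\CG_\theta$ of Lemma~\ref{density}. This yields a Bismut-type integration-by-parts identity
\[
\EE\lk[\la D\phi(u(t)),A_{0,t}v\ra\rk]=\EE\lk[\phi(u(t))\,\mathfrak{G}_t(v)\rk],
\]
where $A_{0,t}v$ is the response of $u(t)$ to the shift $v$ and the weight $\mathfrak{G}_t(v)$ is built from $\CG_\theta$; as $\sigma$ is bounded below by $C_\sigma$ one works with $v_\sigma=v/\sigma(u(\cdot-))$, and by Corollary~\ref{theta-ex} together with \eqref{density11} the norm $\EE|\mathfrak{G}_t(v)|$ is controlled by $\int_0^t|v(s)|^2\,ds$. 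Writing $J_{0,t}h=A_{0,t}v+(J_{0,t}h-A_{0,t}v)$ gives
\[
\lk|\la D\CP_t\phi(x),h\ra\rk|
\le |\phi|_\infty\,\EE|\mathfrak{G}_t(v)|
+|D\phi|_\infty\,\EE\lk|J_{0,t}h-A_{0,t}v\rk|.
\]

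Now I would exploit null controllability with vanishing energy. The residual $J_{0,t}h-A_{0,t}v$ is governed by the deterministic skeleton \eqref{eq-0-c}, i.e. by the mild solution of $\dot w=Aw+Bv$, $w(0)=h$, together with a stochastic term contracted by $e^{-tA}$. Choosing, along a sequence $t_n\uparrow\infty$, the controls $v_n$ furnished by null controllability with vanishing energy, so that $w(t_n)=0$ and $\int_0^{t_n}|v_n(s)|^2\,ds\to0$, makes $\EE|J_{0,t_n}h-A_{0,t_n}v_n|=:\delta_n|h|\to0$ while keeping $\EE|\mathfrak{G}_{t_n}(v_n)|$ bounded (indeed convergent, since the energy tends to $0$); this is precisely the desired estimate.

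I expect the main obstacle to be the rigorous construction of the integration-by-parts identity in the pure-jump, multiplicative setting: there is no Gaussian component along which to differentiate, so the Cameron--Martin shift must be implemented through the jump reparametrisation $\theta$ of Lemma~\ref{trans_ex}, and one must show that $\EE|\mathfrak{G}_t(v)|$ depends on $v$ only through its energy $\int|v|^2$. This is exactly where the hypothesis $\alpha>1$ and the tail law $k(r)=K_0|r|^{-\alpha-1}$ of Hypothesis~\ref{hypo3} enter, via the same $R^{1-2/\alpha}$ bounds exploited in the proof of Theorem~\ref{nondeg2}. The second delicate point is the control of the residual $J_{0,t}h-A_{0,t}v$: one must match the stochastic variation to the deterministically controlled one up to a term that the contractivity of $e^{-tA}$ drives to zero, the vanishing of the energy then upgrading the gradient bound to $\delta_n\to0$ and hence to the asymptotic strong Feller property.
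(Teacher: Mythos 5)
Your strategy is, at bottom, the one the paper uses: perturb the initial condition by $\ep h$, compensate the perturbation by a deterministic control realised through the jump reparametrisation $\theta$ of Lemma \ref{trans_ex}, split the error into (i) the cost of the change of measure, which by Lemma \ref{density} and Corollary \ref{theta-ex} is quadratic in the energy $\int_0^{t_n}|v^n(s)|^2\,ds$, and (ii) a residual measured by the mild-solution mismatch, and then let null controllability with vanishing energy kill both along $t_n\uparrow\infty$. The difference is in the packaging: the paper verifies the pseudometric definition of the asymptotic strong Feller property directly, with the metrics $d_n(y,z)=1\wedge(|z-y|_H/a_n)$ and the finite shift $y=x+\ep h$, whereas you verify the gradient criterion of \cite[Proposition 3.12]{mat-hai-2006} via a variational process $J_{0,t}h$ and a Bismut-type integration by parts.

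That packaging creates two genuine gaps. First, your variational equation contains $\sigma'(u(t-))$, but $\sigma$ is only assumed Lipschitz, bounded and bounded below; nothing in the hypotheses guarantees that the flow $x\mapsto u(t,x)$ is differentiable, so neither $J_{0,t}h$ nor the identity $\la D\CP_t\phi(x),h\ra=\EE\lk[\la D\phi(u(t)),J_{0,t}h\ra\rk]$ is available. Second, the duality formula $\EE\lk[\la D\phi(u(t)),A_{0,t}v\ra\rk]=\EE\lk[\phi(u(t))\,\mathfrak{G}_t(v)\rk]$ is asserted rather than constructed: in the pure-jump setting the Girsanov density $\CG_\theta$ of Lemma \ref{density} yields an identity of \emph{laws} for a finite reparametrisation, and extracting an infinitesimal integration-by-parts formula from it would require differentiating $\CG_\theta$ in the shift parameter and justifying the exchange of derivative and expectation, none of which is done in the paper or in your sketch. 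The paper's finite-difference route avoids both issues: it never differentiates anything, but estimates $\EE^{\bar\PP}\lk|1-\CG^n_\ep(t_n)\rk|\le C\int_0^{t_n}|v^n_\ep(s)|^2\,ds$ and $\EE^{\bar\PP}\lk|u^c_{\mu,n,\ep}(t_n,x,v^n_\ep)-u^n_\ep(t_n,x)\rk|\le C\bigl(\int_0^{t_n}|v^n_\ep(s)|^2\,ds\bigr)^{1/2}=C\ep a_n$, pairs the latter with the Lipschitz constant $1/a_n$ of the test functions, and then sends $n\to\infty$ followed by $\ep\to0$. To repair your argument you must either add $\sigma\in C^1_b$ and actually prove the integration-by-parts identity, or downgrade your infinitesimal identities to their finite-shift counterparts, at which point you recover the paper's proof.
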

\begin{coro}\label{corouniqe}
Assume that $\alpha>1$ and that the conditions of Theorem
\ref{exinv} and Theorem \ref{theo-asf} are satisfied.
\del{the system  \eqref{eq-0-c} is null controllable with vanishing
energy.}
Then, the Markoivan semigroup $(\CP_t)_{t\ge 0}$  generated by the solution to 
\eqref{eqn-00-inv}  admits an invariant measure, and
this invariant measure 
is unique.
\end{coro}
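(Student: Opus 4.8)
The plan is to separate the statement into existence and uniqueness, since existence is already in hand. Indeed, the hypotheses of Theorem~\ref{exinv} are assumed, so $(\CP_t)_{t\ge 0}$ possesses at least one invariant measure by the Krylov--Bogoliubov argument recorded there; it therefore remains only to show that it cannot possess two.

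For uniqueness I would invoke the Hairer--Mattingly criterion recalled above, namely \cite[Corollary 3.17]{mat-hai-2006}: a Markovian semigroup that is asymptotically strong Feller and for which a single point of $H$ lies in the topological support of every invariant measure admits at most one invariant measure. Thus the argument reduces to verifying these two ingredients. The first is immediate from Theorem~\ref{theo-asf}: since by assumption $\alpha>1$ and the deterministic system \eqref{eq-0-c} is null controllable with vanishing energy, the semigroup generated by the solution of \eqref{eqn-00-inv} is asymptotically strong Feller.

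For the second ingredient I would prove that $0\in\supp(\rho)$ for \emph{every} invariant measure $\rho$, that is, $\rho(\CR_H(\delta))>0$ for all $\delta>0$; this is precisely the conjunction of \eqref{inv-1} and \eqref{inv-2}. Estimate \eqref{inv-2} follows from the irreducibility statement of Theorem~\ref{nondeg2}: null controllability with vanishing energy entails in particular (exact, hence approximate) null controllability in some time $T>0$, whence for every $\delta>0$ and every $x\in\CR_H(C)$ there is a $\kappa>0$ with $\PP(u(T,x)\in\CR_H(\delta))\ge\kappa$. Estimate \eqref{inv-1}, the uniform lower bound $\inf_{\rho}\rho(\CR_H(C))>0$ over all invariant measures, I would derive from the a priori second moment bound \eqref{aprioribound}: arguing as in \cite[Lemma B.1]{EMS} it yields a constant, independent of $\rho$, bounding $\int_H|u|_H^2\,\rho(du)$, and Chebyshev's inequality then gives the claim for $C$ large enough. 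Since $\rho$ is invariant, $\rho(\CR_H(\delta))=\int_H\PP(u(T,x)\in\CR_H(\delta))\,\rho(dx)\ge\kappa\,\rho(\CR_H(C))\ge\kappa\,\inf_\rho\rho(\CR_H(C))>0$, so $0$ is a common point of the supports and \cite[Corollary 3.17]{mat-hai-2006} finishes the proof.

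The step I expect to require the most care is the passage to a constant $\kappa$ in \eqref{inv-2} that is \emph{uniform} over all $x\in\CR_H(C)$, rather than attached to a single fixed initial datum as in Theorem~\ref{nondeg2}; this is where the strength of the controllability hypothesis is used, the bounded invertibility of the control map $\Phi_T$ (as in the proof of Theorem~\ref{nondeg22}) furnishing a control energy bounded uniformly over the ball $\CR_H(C)$ and hence an $x$-independent $\kappa$. A second delicate point is the uniformity across the whole family of invariant measures of the moment bound feeding \eqref{inv-1}, which again rests on the contractivity of $e^{-tA}$ built into \eqref{aprioribound}.
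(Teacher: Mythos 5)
Your proposal is correct and follows essentially the same route as the paper: existence via Theorem \ref{exinv} (Krylov--Bogoliubov), and uniqueness via \cite[Corollary 3.17]{mat-hai-2006} combined with the asymptotic strong Feller property from Theorem \ref{theo-asf} and the verification that $0$ lies in the support of every invariant measure through \eqref{inv-1} (from the a priori bound \eqref{aprioribound}, the argument of \cite[Lemma B.1]{EMS} and Chebyshev) and \eqref{inv-2} (from the irreducibility theorems of Section \ref{sec:2}). Your added remark on making $\kappa$ in \eqref{inv-2} uniform over $x\in\CR_H(C)$ via the bounded invertibility of $\Phi_T$ is a point the paper passes over quickly, and your treatment of it is consistent with the proof of Theorem \ref{nondeg22}.
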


\begin{proof}[{\bf Proof of Theorem \ref{theo-asf}}:]\label{pot-theo-asf}
%
Again, we will switch for technical reasons to another
representation of the Poisson random measure.
 Let
$\overline{\mathfrak{A}}=(\bar\Omega,\bar\CF,\bar\FF,\bar\PP)$ be
a filtered probability space with filtration
$\bar\FF=(\bar\CF_t)_{t\ge 0}$ and let $\mu$ be a Poisson random
measure on $\CH$ over $\overline{\mathfrak{A}}$ having intensity
measure $\lambda$ (Lebesgue measure).
The compensator of $\mu$ is denoted by $\gamma $ and given by
$$\CB(\CH)\times\CB([0,\infty))\ni A\times I\mapsto \gamma ( A\times I) := \lambda (A)\,\lambda(I).
$$
%
%
Let \DEQSZ\label{trans-cc1} c:\RR ^+ \ni r \mapsto  \sup_{\rho>0}
\lk\{\int_\rho^\infty k(s)\, ds \ge r \rk\} \, &\mbox{ if } r>0.
\EEQSZ
Now, the stochastic evolution equation given in  \eqref{eqn-00}
reads as follows \DEQSZ\label{eqn-00-mc1} \lk\{ \barray du(t,x)
&=& Au(t,x) + \int_\CH B  \sigma(u(s-))\, c(z)
 \tilde \mu(dz,ds), \\
u(0,x) &=& x\in H.
\earray\rk.
\EEQSZ
We split the proof in several steps.

\paragraph{\bf Step I:}
Fix $x\in H$. In order to show the asymptotical strong Feller
property of $(\CP_t)_{t\ge 0}$ in $x$, we have to show that there
exist an increasing sequence $\{t_n:n\in\NN\}$ and a  totally
separating sequence of pseudometrics $\{d_n:n\in\NN\}\footnote{An
increasing sequence $\{ d_n:n\in\NN\}$ of pseudo metrics is called
a {\em totally separating system} of pseudo metrics for $\CX$, if
$\lim_{n\to\infty}d_n(z,y) =1$ for all $z,y\in\CX$, $z\not = y$.
}$ such that\footnote{Let $d$ be a pseudo--metric on $\CX$, we
denote by $L(\CX,d)$ the space of $d$-Lipschitz functions from
$\CX$ into $\RR$. That is, the function
$\phi:\CX\to\RR$ is  an element of $L(\CX,d)$ if  
$$
\|\phi\|_d := \sup_{z,y\in\CX\atop z\not = y } {
|\phi(z)-\phi(y)|\over d(z,y)}<\infty.
$$
}\footnote{For a pseudo-metric $d$ on $\CX$ we define the
distance between two probability measures $\CP_1$ and $\CP_2$ wrt
to $d$ by
$$
\| \CP_1-\CP_2\|_{d} := \sup_{\phi\in L(\CX,d)\atop\|\phi\|_d=1}
\int_\CX \phi(x)\, (\CP_1-\CP_2)(dx).
$$}
\DEQSZ\label{zuzeigen} \lim_{\ep>0} \limsup_{n\to\infty}
\sup_{y\in B(x,\ep)} \lk\|
\CP_{t_n}(x,\cdot)-\CP_{t_n}(y,\cdot)\rk\|_{d_n}=0. \EEQSZ
Let $\{a_n:n\in\NN\}$ be a sequence of positive real numbers
converging to zero. 
Let
$$
d_n(y,z) := 1\wedge ( |z-y|_H/a_n ),\quad z,y\in H,\quad n\in\NN.
$$
\del{\begin{remark}
The sequence $\{d_n:n\in\NN\}$ is
a  totally separating
sequence of pseudometrics.
\end{remark}}
%
%
Fix $h\in H$ with $|h|=1$. Since the system \eqref{eq-0-c} is null controllable with vanishing energy we can find a sequence of times $\{ t_n:n\in\NN\}$ and controls
$\{v^n:n\in\NN\} $ such that $a^2_n\ge \int_0^{t_n}|v^n (s)|^2
\, ds$, $n\in\NN$, and the solution $u^c$ to
\DEQS
\lk\{ \barray du^c(t,x,v^n) &=& A u^c(t,x,v^n)\, dt + B v^n(t) \, dt ,\quad 0\le t\le t_n
\\
u^c(0,x,v^n) &=& x,\earray\rk.
 \EEQS
satisfy $u^c(t_n,x,v^n)=u^c(t_n,x+ h,0)$. \del{\com{The last
condition is equivalent to say that $$e^-{t_n
A}h=\int_0^{t_n}e^-{(t_n-s) A}Bv_n(s) ds. $$}}
For simplicity, put $y=x+\ep h$ and $v^n_\ep:=\ep\cdot v^n$. Then,
it follows by the linearity that $u^c(t_n,x,\ep v^n)=u^c(t_n,x+
\ep h,0)=u^c(t_n,y,0)$. In order to give an estimate of
 \DEQSZ
\label{diffquotient} \lk\|
\CP_{t_n}(x,\cdot)-\CP_{t_n}(y,\cdot)\rk\|_{d_n} \del{ = \lk| \EE
^ \PP\lk[\phi_n\lk( u(t_n, x+\ep h) \rk)\rk] -\EE ^ \PP\lk[
\phi_n\lk( u(t_n,x)\rk)\rk]\rk|}
 \EEQSZ
 in terms of $\ep$ and $n$, we define the following
sequence of continuous functions. Let $\phi\in C_b(H)$, there
exists a sequence $\{\phi_n:n\in\NN\}$, $\phi_n\in C^\infty_b(H)$,
such that $\phi_{n}\to\phi$ pointwise, $\|\phi_n\|_\infty\le
\|\phi\|_\infty$ and $\|\phi_n\|_{d_n}\le 1$ for all $n\in\NN$.
Now, the following identity holds 
\DEQS \lk\|
\CP_{t_n}(x,\cdot)-\CP_{t_n}(y,\cdot)\rk\|_{d_n}
{ = \lk| \EE ^ \PP\lk[\phi_n\lk( u(t_n, x+\ep
h) \rk)\rk] -\EE ^ \PP\lk[ \phi_n\lk( u(t_n,x)\rk)\rk]\rk|}.
\EEQS
Hence, we have to show
\DEQSZ\label{zuzeigen-1}
\lim_{\ep>0} \limsup_{n\to\infty} \sup_{y\in B(x,\ep)} \lk| \EE ^ \PP\lk[\phi_n\lk( u(t_n, x+\ep
h) \rk)\rk] -\EE ^ \PP\lk[ \phi_n\lk( u(t_n,x)\rk)\rk]\rk|=0.
\EEQSZ
\paragraph{\bf Step II:}
Next, let us introduce a transformation
$\theta_n^\ep:[0,\infty)\times \CH \to\CH $, such that we have
%
%
$$  \int_\CH
(c(\theta^n _\ep(s,z))-c(z))\lambda (dz)=\frac{v^n_\ep(s)}{\sigma(u(s-)) } \quad \mbox{for
all}\quad s\in[0,t_n].
$$
From now on we denote $$ v^n_{\ep,\sigma}(s)=\frac{v^n_\ep(s)}{\sigma(u(s-))}.$$
%
%
%
%
In fact, by Lemma \ref{trans_ex} we can suppose that such a
transformation $\theta^n _\ep$ exists and is given by (see definition \eqref{fidef})
\DEQSZ \label{theta-def} \theta^n_\ep:[0,\infty)\times \RR\ni
(s,z)\mapsto   z+ \hh(\kappa (|v_\epsi^n(s)|),z)\sgn( v_\epsi^n(s)) , \EEQSZ
Here $\kappa$ denotes the inverse of $\wt$ and is defined on page \pageref{def-kappa}.
In addition, let   $\mu^n _\ep$ be a random measure defined
by 
\DEQSZ \CB(\CZ)\times \CB([0,t])\ni A\times I\mapsto \mu^n
_\ep(A\times I) = \int_I\int_A 1_A(
{\theta_\ep^n}(s,z))\mu(dz,ds)
 \EEQSZ
Let $\QQ^{\ep,n}$ be that probability measure on $\overline{\FA}$
such that $\mu^n _\ep$ has compensator {$\gamma=\lambda\cdot
\lambda $\footnote{$\lambda$ denotes the Lebesgue measure.}}. Let $u_\ep^n $ be the solution to
\DEQS \lk\{ \barray  du_\ep^n (t,x) &=& A u_\ep^n (t,x)\, dt +
\int_\CH B \sigma(u(t-))[c(\theta^n _\ep(t,z))-c(z)  ]  \mu(dz,dt)
\\
&&\quad {} + \int_\CH B\sigma(u(t-))
c(z)( \mu-\gamma )(dz,dt),
\\
u^n _\ep(0,x) &=& x, \earray\rk.
\EEQS
and let $u_{\mu,n,\ep}^c$ be solution to
\DEQS \lk\{ \barray du_{\mu,n,\ep}^c(t,x,v_\epsi^n) &=& A
u_{\mu,n,\ep}^c (t,x,v^n _\ep)\, dt + B v^n _\ep(t)\, dt +\int_\CH
B \sigma(u(t-)) c(z) (
 \mu-\gamma )(dz,dt),
\\
u_{\mu,n,\ep} ^c (0,x,v^n_\ep) &=& x. \earray\rk. \EEQS
Observe that, first, by the choice of the transformation $\theta_n^\ep$
under $\QQ^{\ep,n} $ the random variable  $u_\ep^n (t_n,x)$ is identical in law to
the process $u(t_n,x)$. In particular, we have
$$
\EE^{\QQ_{t_n} ^{\ep,n}} \lk[
\phi_n \lk( u_\ep^n ({t_n},x) \rk)\rk]= \EE ^ {\bar \PP}\lk[ \phi_n(u(t_n,x))\rk].
$$
Secondly, by the choice of the control
and the linearity of $A$ we have $u_{\mu,n,\ep}^c(t_n,x,v^n_\ep) = u(t_n,x+\ep
h)$.
For $t\ge 0$ let $\QQ^{\ep,n} _t$ be the restriction of
$\QQ^{\ep,n} $ onto $\bar\CF_t$ and $\bar \PP_t$ be the
restriction of $\bar \PP$ onto $\bar \CF_t$.
Now we are ready to give an estimate of
$$\lk\|
\CP_{t_n}(x,\cdot)-\CP_{t_n}(y,\cdot)\rk\|_{d_n}.
$$
First, we  write
\DEQSZ \label{diffquotient1} \lk\|
\CP_{t_n}(x,\cdot)-\CP_{t_n}(y,\cdot)\rk\|_{d_n}
 = \lk| \EE ^ \PP\lk[\phi_n\lk( u(t_n, x+\ep
h) \rk)\rk] -\EE ^ \PP\lk[ \phi_n\lk( u(t_n,x)\rk)\rk]\rk|.
\EEQSZ
and
\DEQS \lqq{ \EE ^ {\bar \PP}\lk[\phi_n\lk( u({t_n}, x+\ep h)
\rk)\rk] -\EE ^
{\bar \PP}\lk[ \phi_n \lk( u({t_n},x)\rk)\rk] } &&\\
&=& \EE ^ {\bar \PP}\lk[\phi_n \lk(
u_{\mu,n,\ep}^c({t_n},x,v^n_\ep) \rk)\rk] - \EE ^ {\bar
\PP}\lk[\phi_n \lk( u^n _\ep({t_n},x) \rk)\rk] + \EE ^ {\bar
\PP}\lk[\phi_n \lk( u^n _\ep({t_n},x) \rk)\rk] -\EE^{\bar \PP}\lk[
\phi_n \lk( u({t_n},x)\rk)\rk]
\\
&=& \EE ^ {\bar \PP}\lk[\phi_n \lk( u_{\mu,n,\ep}^c({t_n},x,v^n
_\ep) \rk) - \phi_n \lk( u_\ep^n ({t_n},x) \rk)\rk] + \EE^{\bar
\PP}\lk[ \lk[ 1-{d\QQ_{t_n} ^{\ep,n}\over d{\bar \PP}_{t_n}}\rk]
\phi_n \lk( u_\ep^n ({t_n},x) \rk)\rk] \\ && \qquad {} +\EE^{\QQ^
{\ep,n}} \lk[\phi_n \lk( u_\ep^n ({t_n},x) \rk)\rk] -\EE^{\bar
\PP}\lk[ \phi_n \lk( u({t_n},x)\rk)\rk]
\\
&=& \EE ^ {\bar \PP}\lk[\phi_n \lk(
u_{\mu,n,\ep}^c({t_n},x,v^n_\ep) \rk) - \phi_n \lk(
u_\ep^n({t_n},x) \rk)\rk] + \EE^{\bar \PP}\lk[ \lk[ 1-{d\QQ_{t_n}
^{\ep,n}\over d{\bar \PP}_{t_n}}\rk] \phi_n \lk( u_\ep^n({t_n},x)
\rk)\rk] . \EEQS
Next, \DEQS \lqq{ \lk|\EE ^ {\bar \PP}\lk[\phi_n \lk( u({t_n},
x+\ep h) \rk)\rk] -\EE ^
{\bar \PP}\lk[ \phi_n \lk( u({t_n},x)\rk)\rk]\rk| } &&\\
&\le & {1 
\over a_n} \,\, \EE ^ {\bar \PP}\lk|
u_{\mu,n,\ep}^c({t_n},x,v^n _\ep)  - u_\ep^n ({t_n},x) \rk| +
||\phi_n ||_\infty\,\EE^{\bar \PP} \lk| 1-{d\QQ_{t_n} ^{\ep,n}\over
d{\bar \PP}_{t_n}}\rk|.
\EEQS
Let us put
$$
I^\ep_1 := \EE ^ {\bar \PP}\lk| u_{\mu,n,\ep}^c({t_n},x,v^n _\ep)  -
u_\ep^n ({t_n},x) \rk| , \quad \mbox{ and } \quad I^\ep_2 := \EE^{\bar
\PP} \lk| 1-{d\QQ_{t_n} ^{\ep,n}\over d{\bar \PP}_{t_n}}\rk|.
$$
Next, by the construction of $ u_\ep^n (t,x)$ and $u_{\mu,n,\ep}^c(t,x,v^n _\ep)$ we
see that
\DEQS u_\ep^n ({t_n},x)-u_{\mu,n,\ep}^c({t_n},x,v^n _\ep)= \int_0^{t_n} \int_\CH e^{-({t_n}-s)A} B \sigma(u(s-))\lk[ c(z)-c(\theta_n^\ep(s,z))\rk]\,(\gamma-\mu)(dz,ds). \EEQS
and therefore
\DEQS
\lqq{\EE^{\bar \PP }\lk|u^n _\ep ({t_n},x)-u_{\mu,n,\ep}^c({t_n},x,v^n _\ep)\rk|^2 }
& & \\
&\le & \EE^{\bar \PP } \int_0^{t_n} \int_\CH \lk|e^{-({t_n}-s)A}
B\sigma(u(s-))\lk[ c(z)-c(\theta_n^\ep (s,z))\rk] \rk| \lambda  (dz)\, ds
\\
&\le & \frac 1 C_\sigma \EE^{\bar \PP } \int_0^{t_n} \lk|e^{-({t_n}-s)A} B\rk| \left|v^n_\ep(s)\right| \,ds
. \EEQS
Hence,
\DEQS
 \lk| I _1^\ep\rk|&\le& \,  \EE ^ {\bar \PP} \lk| u({t_n},x) - u_{\mu,n,\ep}^c({t_n},x,v^n _\ep)
 \rk|^2
\le  
 \frac{C}{C_\sigma}\biggl(\int_0^{t_n} \lk|e^{-({t_n}-s)A} B\rk|^2 ds\biggr)^\frac 12  \EE^{\bar \PP }\biggl(\int_0^{t_n}\lk|v^n _\ep(s)\rk|^2 \,ds\biggr)^\frac12 
\\
&\le & \frac{C || B||^2}{C_\sigma}\lk( \int_0^{t_n} (t_n-s)^{-2\gamma}e^{-2({t_n}-s)\rho}\, ds\rk) ^\frac 12
\lk( \int_0^{t_n} \lk| v^n _\ep(s)\rk|^2 \,ds \rk)^\frac 12
%
\\ &\le & \frac{C(\gamma,\rho,B)}{C_\sigma}\,  \lk( \int_0^{t_n} \lk| v^n _\ep(s)\rk|_\CH ^2
\,ds\rk)^\frac 12 . \EEQS
%
To give an estimate of the second term $I ^ \ep_2$ we apply
 \cite[Theorem 1]{abso} to get an exact representation of the Radon
Nikodym derivative. In particular, we obtain
\DEQS
 I_2 ^ \ep &\le &\EE ^ {\bar \PP}\lk[(1-\tfrac{d\QQ_{t_n} ^{\ep,n}}{d{\bar \PP}_{t_n}})\, \phi\lk( u^n _\ep({t_n},x) \rk)\rk]
\\ &\le & \EE ^ {\bar \PP}\lk[(1-\CG^n _\ep({t_n}))\, \phi\lk( u_\ep^n ({t_n},x)\rk)\rk]
\EEQS
where $\CG^n _\ep $ is defined by (see Lemma \ref{density} and \eqref{theta-def}) 
\DEQSZ \label{asdonebefroe}
\\\nonumber
 \lk\{\barray d\CG^n _\ep (t) &=& \CG^n
_\ep (t)\lk( \rho_z( \kappa(|v_{\ep,\sigma}^n(s)|) z ) \sgn( v_{\ep,\sigma}^n(s)) \rk)
\,(\mu-\gamma)(dz,ds)
\\ \CG^n _\ep (0)&=& 1.
\earray\rk. \EEQSZ
 Now, the H\"older inequality gives
\DEQS I_2 ^ \ep &\le & \EE ^ {\bar \PP}\lk|1-\CG^n _\ep ({t_n})
\rk|\, |\phi|_\infty.
\EEQS
First we will give an estimate of $\EE    \sup_{0\le s\le t_n} |\CG
_{\ep}^n (s)|$. An application of the It\^o formula and the
estimate \eqref{integration-gives} give for $0\le t\le t_n$
\DEQSZ\label{her} \nonumber {\EE^{\bar \PP}}  \sup_{0\le s\le t}
|\CG^n _{\ep}(s)|
  & \le &1 + \int_0 ^ t   \int_{\RR^+} \,|\CG ^n _{\ep}
(s-)|  | \kappa ( v_{\ep,\sigma}^n (s) )|  |\rho_z(  | \kappa ( v_{\ep,\sigma}^n (s) )|   ,z 
)| \, ds \\
  & \le &1 + C\, \int_0 ^ t    \,|\CG ^n _{\ep}
(s-)|   \kappa( |v_{\ep,\sigma}^n(s)  |)   \int_{\RR^+}|\rho_z( | \kappa ( v_{\ep,\sigma}^n(s) )| ,z 
)| \,dz  ds  . \EEQSZ
By Corollary \ref{theta-ex} and assumption on $\sigma(\cdot)$
 it follows 
\del{\DEQS \int_\RR|\rho_z(k,z)|\, dz &=& \bcase \phantom{\Bigg|} k\,
\int_{r_1 k^ \neu}^ {2 r_1 k^ \neu} z^ {-\beta_1-1}\, dz\le C r_1^
{-\beta_2} k^{1-\beta_1 \neu } \mbox{ for } K\ge 1,
\\ \phantom{\Bigg|}k^ \frac 12 \,\int_{r_1 k^ \nb}^ {2 r_1 k^ \nb} z^
{-\beta_2-1}\, dz\le C r_1^ {-\beta_2}  k^{-\beta_2 \nb } \mbox{
for } K< 1 . \ecase \EEQS
By the choice of $\beta_i$ and $\gamma_i$, $i=1,2$ we have}
\DEQS\label{endlich}
   \int_{\RR^+}|\hh_z( \kappa( |v_{\ep,\sigma}^n(s)  |),z 
)| \,dz  \le \frac{C(r_1)}{C_\sigma} |v^n _\ep(s)  | ^ {2} . \EEQS
 Substituting this last estimate in \eqref{her} we obtain
\DEQSZ
{\EE^{\bar \PP}}  \sup_{0\le s\le t} |\CG^n _{\ep}(s)|
  & \le &1 +  \frac{C(r_1)}{C_\sigma}\int_0 ^ t   \,|\CG ^n _{\ep}
(s-)|  | v_{\ep}^n(s) | ^ 2 \, ds \\
  & \le &1 + \frac{C(r_1)}{C_\sigma}\,\EE \sup_{0\le s\le t}   |\CG^n _{\ep}(s)|
  \,  \int_0 ^ t   |v^n _\ep(s)  |^ 2
  ds  . \EEQSZ
Since $\int_0 ^ {t _n}  |v^n _\ep(s)  |^ 2
  ds \le a_n^2$ and $a_n\to0$, there exists a $n_0\in\NN$ such
  that $C(r_1) a_n^ 2<1/2$.
  Therefore, for $n\ge n_0$ we obtain
\DEQS {\EE^{\bar \PP}} \sup_{0\le s\le t}| \CG _{\ep}^n (s)| &\le
& 2.
\EEQS 
Again applying the It\^o formula and the considerations above we
obtain
\DEQS \lqq{
 {\EE^{\bar \PP}} \lk|\CG_{\ep} ^n ({t_n})-1\rk|  \le  \frac{C(r_1)}{C_\sigma}{\EE^{\bar \PP}}  \int_0 ^ {t_n} \  {\lk| \CG^n _{\ep}(s-)  \rk|}\lk|  v^n _\ep(s) \rk|^2  \, ds }
&&\\
&\le & \frac{C(r_1)}{C_\sigma} \,\,{\EE^{\bar \PP}}{ \sup_{0\le s\le t_n}}| \CG _{\ep}^n
(s)|\,
 \int_0 ^ {t_n}  \lk| v^n _\ep (s) \rk| ^2 \, ds,
\\
&\le &
 \frac{C(r_1)}{C_\sigma} 2 \ep a_n^2.
\EEQS
\noindent Going back to Ansatz \eqref{diffquotient1} and taking
the limit, it follows that there exists some constants $C_1,
C_2>0$ and some $n_0\in\NN$, such that for all $n\ge n_0$
\DEQS \lqq{ \lk| \EE ^ {\bar \PP}\lk[\phi_n\lk( u({t_n}, x+\ep h)
\rk)\rk]
-\EE ^ {\bar \PP}\lk[ \phi_n \lk( u({t_n},x)\rk)\rk]\rk| } && \\
&\le &
 \lk\{
 \frac {C_1} {a_n} \,  \lk(
\int_0^{t_n}|v^n _\ep(s)|^2\,ds\rk) ^\frac 12  + C_2\,
\|\phi\|_\infty\; 2\ep^2 a_n^2  \rk\}. \EEQS
Hence,
 we have
 \DEQS
 &\le & \lk\{
 \frac {C_1 \ep a_n }{a_n} + C_2  \ep a_n\rk\} .
 \EEQS
 Taking the limit $n\to\infty$ we get
 $$\limsup_{n\to\infty} \sup_{y\in B(x,\ep)} \lk\|
\CP_{t_n}(x,\cdot)-\CP_{t_n}(y,\cdot)\rk\|_{d_n}\le C \ep.$$
Taking the limit $\ep\to 0$, the assertion follows.
\end{proof}


\del{\section{Conclusions and additional remarks}

Tracing the proof of ...
}

\section{An Example - the damped wave equation with boundary noise}\label{dampedwave}\label{sec:4}

As mentioned in the introduction, as example we consider an
elastic string, fixed at one end and perturbed at the other end by
a L\'evy noise.
 Mathematically, the system can be formulated as damped wave equation with boundary L\'evy noise.

Throughout this section suppose that we are given a filtered
probability space $(\Omega, \mathcal{F}, \mathbb{F}, \mathbb{P} )$
such that the filtration $\mathbb{F}=(\mathcal{F}_t)_{t\ge 0}$
satisfies the usual condition. On this probability space we assume
that we are given a real valued  L\'evy process $L$. Let  $T>0$ and $\alpha\in\RR$.
We
consider the system
\DEQSZ\label{damped1} \lk\{ \barray u_{tt}(t,\xi) - \Lambda \,
u(t,\xi) + \alpha \, u_\xi(t,\xi) &=& 0,\quad t\in (0,T),\, \xi\in
(0,2 \pi ),
\\
u(t,0)&=& 0 ,\quad t\in (0,T),
\\ u_\xi(t,2 \pi ) &=& \sigma(u(t)) \dot{L}_{t},\quad t\in (0,T),
\\ u(0,\xi)&=& u_0(\xi),\quad u_t(0,\xi) = u_1(\xi),\quad
\earray\rk.
\EEQSZ where $\Lambda=\Delta$ the Laplacian and $\dot{L}$ is the
Radon Nikodym derivative of a real valued  L\'evy process with
characteristic measure $\nu$, $u_0\in H_0^1(0,2 \pi )$ and $u_1\in
L^2(0,2 \pi )$. Here we have set
$\sigma(u(t))=\log(2+|u(t)|_{L^2(0,2\pi)})$ for any $t\in (0,T)$.

Equation \eqref{damped1} can be reformulated as a evolution equation of order one. 
Henceforth, let us introduce  the Hilbert space
$\HC=D(\Lambda^\frac{1}{2})\times L^2(\mathcal{O})$ equipped with the scalar product $$ \langle w,z \rangle_\HC =\langle \Lambda^\frac{1}{2}w_1, \Lambda^\frac{1}{2}z_1\rangle +
\langle w_1, w_2\rangle, \quad  w=\begin{pmatrix} w_1\\ w_2\end{pmatrix}\in\HC \mbox{ and }
z=\begin{pmatrix} z_1\\ z_2\end{pmatrix}\in\HC,
 $$
where $\langle\,\cdot\, ,\,\cdot\, \rangle$ denotes the scalar product on $L^2(\mathcal{O})$.
Define  an operator $\mathcal{A}$ with domain $D(\mathcal{A})=D(\Lambda)\times D(\Lambda^\frac{1}{2})\to \HC$ by
\begin{equation}
 \mathcal{A}\begin{pmatrix}z_1\\
z_2
 \end{pmatrix}
=\begin{pmatrix}
  0&& I\\
-A&& 0
 \end{pmatrix}
\begin{pmatrix}
 z_1\\z_2
\end{pmatrix},
\end{equation}
and $\CB_\alpha :\HC\to \HC $ by
\DEQS \CB_\alpha  \pmat z_1\\z_2\epmat &=& \pmat 0\\
\alpha z_1\epmat. \EEQS
It is not difficult to prove that $\CA$ generates a $C_0$
semigroup $(\CS(t))_{t\ge 0}$ on $\HC$. To be more precise, if $\{
\lambda_n=:n\in\NN\}$ are the eigenvalues and $\{\phi_n:n\in\NN\}$
the eigenfunction of $A$, then $\{ \mu_n:n\in\ZZ\}$ with
$\mu_n=\sqrt{|\lambda_n|}$, $\mu_{-n}=\mu_n$, $n\in\NN$, are the
eigenvalues and
$$
\lk\{ \psi = \frac 1 { \sqrt{2}}\, \pmat {1\over \mu_n } \phi_n\\
\phi_n\epmat : n\in\mZ\rk\},
$$
are the eigenfunction of $\CA$ (see \cite[Proposition]{tucsnak}).
The semigroup $\CS$ can be written as
$$
\CS(t) \pmat f\\g\epmat = {1\over \sqrt{2}} \sum_{n\in\ZZ}
e^{\mu_n} \lk( \mu_n \la {df\over dx},{ d\phi_n\over dx}\ra_{L^2
([0,1])}+\la g, \phi_n\ra _{L^2 ([0,1])}\rk) \psi_n, \quad \pmat
f\\g\epmat\in\CH.
$$

To rewrite \eqref{damped1} as a stochastic evolution equations on the Hilbert space $\HC$ we need to find a way of transforming the nonhomogeneous boundary
conditions in \eqref{damped1} to homogeneous one. Therefore we introduce the  operator $D_{B,\gamma}$.
For every $a\in\RR$, $v=D_{B,\gamma}\, a$ is a solution to the problem
\DEQS
\lk\{ \barray A v(\xi)&=& \lambda v(\xi),\quad \xi \in \CO,
\\
v_\xi(2 \pi ) &=& a, \quad v_\xi(0)=0. 
\earray\rk.
\EEQS
By a short calculation it follows that given $a\in\RR$,
$$v =D_{B,1}(\xi)={a\over e^{2\pi}-e^{-2\pi}}\, (e^{-\xi} +e^{\xi}),\quad  \xi\in [0,2 \pi ].
$$
Following the approach in  \cite{Maslowski} and
\cite{PESZAT+ZABZCYK} we see that \eqref{damped1} can be
transformed to the following
\begin{equation}\label{mod-damped12}
\begin{cases}
dX=\left(\mathcal{A}+\mathcal{B}_\alpha\right)X(t)  dt + (\CA-\CI) 
\pmat 0\\ D_{B,1}\Big(\sigma\circ\Pi_1( X(t))\Big) d {L}\epmat,\\
X(0)=X_0,
\end{cases}
\end{equation}
where $X=(u,\dot{u})^T$, $X_0=(u_0,u_1)^T$. Here $\Pi_1 X=u$
denotes
the projection from $\mathcal{H}$ onto $D(\Lambda^\frac 12)$. 

From now on we will work with
\eqref{mod-damped12}. \del{, whose solution is defined . Now we
define explicitly what we mean by solution of
\eqref{mod-damped12}.
\begin{definition}
 A stochastic process $\{X(t): t\in [0,\infty)\}$ is a mild solution of \eqref{mod-damped12} if
 \begin{itemize}
  \item $X(t)$ is $\mathcal{F}_t$-measurable for any $t\ge 0$,
  \item for any $T\ge 0$, $\int_0^T \EE \lVert X(s)\rVert^2_\mathbf{H} ds<\infty $,
  \item we have
  \begin{equation}
   X(t)=e^{-t\mathcal{A}}X_0 + \int_0^t e^{-(t-s)\mathcal{A} }\mathbf{B} d\mathbf{L},
  \end{equation}
  for any $t\in [0,T], \,\, T\ge 0$  with probability one.
 \end{itemize}

\end{definition}}
First, note that by mimicking the proof of \cite[Theorem
15.7.2]{PESZAT+ZABZCYK} (see also \cite{Maslowski}, \cite{DZ92})
one can show that Problem \eqref{mod-damped12} is well posed.
Moreover, if
 $\int z^2 \nu(dz)<\infty$, then \eqref{mod-damped12} has a
unique mild solution which is a Markov-Feller process. In
particular, the family of operators 
$(\CP_t)_{t\ge 0}$
defined by
\DEQSZ\label{def-mark-semi} \CP_t\phi(x) := \EE \phi(X ),\quad
\phi\in C_b(\HC),\; t\ge 0.
\EEQSZ is indeed a semigroup on $C_b(\HC)$. By means of Theorem
\ref{theo-asf} and Lemma \ref{nondeg2}  the following result can
be obtained.
\begin{theorem}\label{nondeg-uni-heat}
 There exists a time $T>0$ such that for any $C>0$ and $\rho>0$, there exists a $\kappa>0$ such that 
for any $x,y\in \CB_{L^2(\CO)}$ we have $$\CP_T^\ast \delta_x(
\CD_{L^2(\CO)} (\rho,y))\ge \kappa,$$ where $\CD_{\mathcal{H}}
(\rho,y)=\{ z\in \mathcal{H}: |z-y|_\mathcal{H}\le \rho\}$.
\end{theorem}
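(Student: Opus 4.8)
The plan is to identify the reformulated system \eqref{mod-damped12} with the abstract equation \eqref{eq-0mitv} and then to apply Theorem \ref{nondeg2cad2}, supplemented by the re-centring remark following Theorem \ref{nondeg22}. Concretely, I take the state space $H=\HC$, generator $\CA+\CB_\alpha$ with semigroup $(\CS(t))_{t\ge0}$, noise operator $B=(\CA-\CI)\pmat 0\\ D_{B,1}\epmat$, and multiplicative coefficient $\tilde\sigma:=\sigma\circ\Pi_1$, where $\sigma(u)=\log(2+|u|_{L^2(\CO)})$. Since $D_{B,1}$ acts linearly on its scalar argument, the boundary term in \eqref{mod-damped12} is exactly of the multiplicative form $B\,\tilde\sigma(X)\,dL$ demanded by \eqref{eq-0mitv}, so the problem fits the abstract framework.

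First I would verify the standing assumptions of Theorem \ref{nondeg2cad2}. That $\CA+\CB_\alpha$ generates a $C_0$-semigroup on $\HC$ was established above from the explicit spectral description of $\CA$ (the bounded perturbation $\CB_\alpha$ does not affect generation). The coefficient $\tilde\sigma(X)=\log(2+|\Pi_1 X|_{L^2(\CO)})$ is Lipschitz on $\HC$, of sublinear growth, and bounded below by $C_\sigma:=\log 2>0$, so the conditions imposed on $\sigma$ in \eqref{eq-0mitv} hold, and Hypothesis \ref{hypo3} on the L\'evy measure is assumed throughout. The one genuinely analytic point is that $B=(\CA-\CI)\pmat 0\\ D_{B,1}\epmat$ maps $\RR$ boundedly into $D\bigl((\CA+\CB_\alpha)^{-\gamma}\bigr)$ for some $\gamma<\tfrac12$; this rests on the lifting $D_{B,1}$, computed above as $D_{B,1}=\tfrac{1}{e^{2\pi}-e^{-2\pi}}(e^{-\xi}+e^{\xi})$, being smooth enough to absorb the derivative loss caused by $(\CA-\CI)$. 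Finally, the deterministic control system \eqref{eq-0-c} attached to \eqref{damped1} is exactly null controllable in every time $T>2\pi$ by Zuazua's theorem, and its mild solution is \cadlag in $L^2(\CO)$; fix any such $T>2\pi$.

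With the hypotheses in place, Theorem \ref{nondeg2cad2} yields, for every $C>0$, $x\in\CR_\HC(C)$ and $\delta>0$, a constant $\kappa>0$ with $\PP(u(T,x)\in\CR_\HC(\delta))\ge\kappa$, i.e.\ positivity of $\CP_T^\ast\delta_x$ on balls centred at the origin. To move the centre to an arbitrary target $y$ I would exploit that \eqref{eq-0-c} is exactly, not merely null, controllable: as in the proof of Theorem \ref{nondeg22}, the operator $\Phi_T(v)=\int_0^T\CS(T-s)Bv(s)\,ds$ is boundedly invertible, so for each $(x,y)$ there is a control $v_{x,y}$ with $u^c(T,x,v_{x,y})=y$ and $\|v_{x,y}\|_{L^2([0,T];\RR)}^2\le\|\Phi_T^{-1}\|^2\,|y-\CS(T)x|^2$. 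Re-running the change-of-measure estimate of Theorem \ref{nondeg2cad2} around $v_{x,y}$ (equivalently, invoking the re-centring remark) reproduces \eqref{ending2} with $\CD_\HC(\rho,y)$ in place of the origin-centred ball. Since $\|\Phi_T^{-1}\|^2|y-\CS(T)x|^2$ is bounded uniformly for $x,y$ ranging over a fixed ball, the resulting $\kappa$ depends only on $C$ and $\rho$, which is the asserted uniformity; the passage to the weaker $L^2(\CO)$-topology is then immediate, because the $\HC$-norm dominates the $L^2(\CO)$-norm up to a Poincar\'e constant, so a sufficiently small $\HC$-ball is contained in the prescribed ball $\CD_{L^2(\CO)}(\rho,y)$.

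The main obstacle, and the only genuinely problem-specific work, is the regularity step just flagged: proving that $B=(\CA-\CI)\pmat 0\\ D_{B,1}\epmat$ lands in $D\bigl((\CA+\CB_\alpha)^{-\gamma}\bigr)$ for some $\gamma<\tfrac12$, and that the mild solution is genuinely \cadlag in the space where controllability is available. These are precisely the inputs through which Theorem \ref{nondeg2cad2} is fed, and they must be read off from the spectral representation of $\CS$ recorded above; once they are established, the probabilistic content is supplied verbatim by the abstract theorems already proved, and no further stochastic analysis is required.
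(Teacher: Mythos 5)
Your proposal is correct and follows essentially the route the paper itself takes: the paper offers no detailed proof of this theorem beyond citing the abstract irreducibility results of Section \ref{sec:2} together with Zuazua's exact controllability of the boundary-controlled wave equation and the \cadlag\ property of the mild solution, which is exactly the package you assemble. Your choice of Theorem \ref{nondeg2cad2} plus the re-centring remark after Theorem \ref{nondeg22} (rather than the paper's literal citation of Theorem \ref{nondeg2}, which alone would give neither arbitrary centres $y$ nor uniformity of $\kappa$ over $x$) is in fact the combination the paper's own example following Theorem \ref{nondeg2cad2} indicates, so the approaches coincide.
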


\begin{coro}\label{ex-uni-inv}
If $\alpha>0$ then the Markovian semigroup $(\CP_t)_{t\ge 0}$
defined by \eqref{def-mark-semi}
 has at most one invariant measure.
\end{coro}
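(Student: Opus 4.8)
The plan is to deduce the claim from the Hairer--Mattingly uniqueness criterion \cite[Corollary 3.17]{mat-hai-2006}: the semigroup $(\CP_t)_{t\ge 0}$ of \eqref{def-mark-semi} has at most one invariant measure once one checks that (i) $(\CP_t)_{t\ge 0}$ is asymptotically strong Feller and (ii) a single point of $\HC$ belongs to the support of every invariant measure. The whole argument therefore reduces to verifying (i) and (ii) for the abstract form \eqref{mod-damped12}, whose generator is $\CA+\CB_\alpha$ and whose noise operator plays the role of $B$ in the theory of Sections \ref{sec:2}--\ref{sec:3}. (Throughout, the standing index hypothesis is Hypothesis \ref{hypo3}, which supplies the condition on the L\'evy index required by Theorem \ref{theo-asf}; the parameter $\alpha>0$ in the statement is the damping of \eqref{mod-damped12} and enters only through stability and controllability.)

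For (i) I would appeal to Theorem \ref{theo-asf}. Its index condition being already in force, the one substantial point is that the linearised control system \eqref{eq-0-c} associated with \eqref{mod-damped12} is null controllable with vanishing energy. This is where the damping $\alpha>0$ is decisive: it makes the semigroup generated by $\CA+\CB_\alpha$ exponentially stable on $\HC$. Coupling this decay with the exact controllability of the conservative wave system in any time $T>2\pi$ (Zuazua \cite{zuazua}), one steers an arbitrary state to the origin along horizons $t_n\to\infty$ using controls whose energy $\int_0^{t_n}|v_n(s)|^2\,ds$ tends to zero, which is precisely null controllability with vanishing energy in the sense of \cite{PPZ,priola-3}. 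Theorem \ref{theo-asf} then yields the asymptotic strong Feller property.

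For (ii) I would show $0\in\supp(\rho)$ for every invariant measure $\rho$, following the scheme recorded after Theorem \ref{exinv}. The contractivity ensured by $\alpha>0$ gives the a priori bound \eqref{aprioribound}; integrating it against an invariant $\rho$ and arguing as in \cite[Lemma B.1]{EMS} produces a bound $\int_\HC|u|^2_\HC\,\rho(du)\le C$ uniform over all invariant measures, so Chebyshev yields the uniform lower bound \eqref{inv-1}, namely $\inf_\rho\rho(\CR_\HC(C))>0$ for a suitable radius $C$. On the other hand, Theorem \ref{nondeg-uni-heat} furnishes the irreducibility input \eqref{inv-2}, providing $T>0$ and $\kappa>0$ with $\CP_T^\star\delta_x(\CR_\HC(\delta))\ge\kappa$ for all $x\in\CR_\HC(C)$ and all $\delta>0$. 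Invariance of $\rho$ then gives, for each $\delta>0$,
$$
\rho\big(\CR_\HC(\delta)\big)=\int_\HC \CP_T^\star\delta_x\big(\CR_\HC(\delta)\big)\,\rho(dx)\ \ge\ \kappa\,\inf_\rho\rho\big(\CR_\HC(C)\big)\ >\ 0,
$$
so $0$ lies in the support of every invariant measure.

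With (i) and (ii) in hand, \cite[Corollary 3.17]{mat-hai-2006} gives at once that $(\CP_t)_{t\ge 0}$ admits at most one invariant measure, proving the corollary. I expect the genuine difficulty to lie in the controllability step of (i): converting the finite-time exact controllability of the undamped wave equation, together with the exponential stability coming from $\alpha>0$, into null controllability with vanishing energy --- that is, controlling the growth of the control energy uniformly as the horizon diverges --- is the only place where $\alpha>0$ is used in an essential, non-routine manner.
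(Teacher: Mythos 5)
Your proposal is correct and follows the same overall architecture as the paper's proof: Hairer--Mattingly \cite[Corollary 3.17]{mat-hai-2006} applied to (i) the asymptotic strong Feller property obtained from Theorem \ref{theo-asf}, and (ii) the fact that $0$ lies in the support of every invariant measure, obtained by combining the irreducibility estimate \eqref{inv-2} (via Theorem \ref{nondeg2}/Theorem \ref{nondeg-uni-heat}) with the uniform lower bound \eqref{inv-1} coming from the a priori second-moment estimate. The one place where you genuinely diverge is the verification that the controlled system is null controllable with vanishing energy: you propose the direct ``let the state decay exponentially for a long time, then steer to zero with a bounded-cost exact control'' argument, which requires $\alpha>0$ for exponential stability. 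The paper instead invokes Lemma \ref{wave-control}, which deduces vanishing energy from exact controllability together with the Priola--Zabczyk spectral criterion \cite[Theorem 1.1]{priola-3}, needing only $S(\CA)\le 0$, i.e.\ contractivity; this is why the paper remarks that the asymptotic strong Feller property persists even for $\alpha=0$. Consequently your closing claim --- that the controllability step is the only place where $\alpha>0$ enters essentially --- is the opposite of the paper's accounting: in the paper the essential use of $\alpha>0$ is in step (ii), where the exponential decay rate $\omega>0$ in \eqref{aprioribound} is needed to get a second-moment bound uniform over all invariant measures and hence \eqref{inv-1}. Your direct argument for vanishing energy is nonetheless valid, just less sharp in the parameter $\alpha$; both routes prove the corollary.
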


\begin{remark}
Since the system is approximate controllable with vanishing energy in case $\alpha=0$ it is asymptotically strong Feller
also for $\alpha=0$.
\end{remark}

We need to show some facts which are essential for the results in
the previous sections to be applicable in for our example. First
note, that the following system
 \DEQSZ\label{damped1-ex} \lk\{ \barray
u_{tt}(t,\xi) - \Lambda \, u(t,\xi) + \alpha \, u(t,\xi) &=&
0,\quad t\in (0,T),\, \xi\in (0,2 \pi ),
\\
u(t,0)&=& 0 ,\quad t\in (0,T),
\\ u_\xi(t,2 \pi ) &=& v(t),\quad t\in (0,T),
\\ u(0,\xi)&=& u_0(\xi),\quad u_t(0,\xi) = u_1(\xi),\quad
\earray\rk.
\EEQSZ
with control $v\in L^ 2 ([0,\infty);\RR)$ is approximate null
controllable with vanishing energy. This statement is proved in
the following Lemma. \del{ (see e.g.\ \cite[Example
11.2.6]{tucsnak}. Moreover, it is shown in \cite[Proposition
6.3]{priola-3}, that the system \eqref{damped1-ex} is null
controllable (and even exact controllable) with vanishing energy.}

\begin{lemma}\label{wave-control}
 The system \begin{equation}\label{damp-control}
 \begin{cases}
             \frac{\partial X(t) }{\partial t}=(\mathcal{A}+\CB_\alpha)X(t)+(\CA-\CI) 
\pmat 0\\ D_{B,1} v(t)\epmat ,\\
             X(0)=X_0,
             \end{cases}
            \end{equation}
is approximate null controllable with vanishing energy.
\end{lemma}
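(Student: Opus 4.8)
The plan is to deduce the claim from the exact boundary controllability of the string together with the abstract characterisation of null controllability with vanishing energy of Priola and Zabczyk. Throughout I write $\CS_\alpha$ for the $C_0$--group generated by $\CA+\CB_\alpha$ on $\HC$: since $\CA$ generates the energy--preserving wave group, whose eigenvalues are the purely imaginary numbers $\pm i\mu_n$, $n\in\mZ$, with the eigenfunctions $\psi_n$ exhibited above, and since $\CB_\alpha\in L(\HC)$ is bounded, $\CA+\CB_\alpha$ again generates a $C_0$--group by the bounded perturbation theorem. I would regard \eqref{damp-control} as the abstract linear system $\dot X=(\CA+\CB_\alpha)X+\CB v$ with the unbounded but admissible control operator $\CB v:=(\CA-\CI)\pmat 0\\ D_{B,1}\,v\epmat$, admissibility being exactly the well--posedness already recorded for \eqref{mod-damped12}.

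First I would establish null (indeed exact) controllability of \eqref{damp-control} in some finite time. For the undamped string the Neumann boundary control problem \eqref{damped1-ex} is exactly controllable in every time $T>2\pi$ by Zuazua \cite{zuazua}; equivalently the input map $\Phi_T:L^2(0,T;\RR)\to\HC$, $\Phi_T v=\int_0^T\CS(T-s)\CB v(s)\,ds$, is onto. Exact controllability is preserved under the bounded perturbation $\CB_\alpha$, so the perturbed input map $\Phi_T^\alpha$ associated with $\CS_\alpha$ is onto for $T>2\pi$; fixing one such $T_0$, this yields a bounded right inverse and a coercive controllability Gramian $Q_{T_0}:=\int_0^{T_0}\CS_\alpha(s)\CB\CB^\ast\CS_\alpha^\ast(s)\,ds$ on the (dense) reachable subspace. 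In particular any $z\in\HC$ can be steered to $0$ in time $T_0$ with energy $\le \|Q_{T_0}^{-1}\|\,|z|_\HC^2$, up to an arbitrarily small error; this is where the word \emph{approximate} enters, in case the reformulation yields only density of the range of $\Phi_{T_0}^\alpha$.

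The heart of the matter is the vanishing of the energy, for which I would invoke the criterion of \cite{priola-3,PPZ}: a null controllable system whose generator has non-positive growth bound, equivalently whose group is bounded, $\sup_{t\ge0}\|\CS_\alpha(t)\|_{L(\HC)}<\infty$, is null controllable with vanishing energy. Concretely, the mechanism is that the minimal steering energy in time $t$ equals $\langle Q_t^{-1}\CS_\alpha(t)X_0,\CS_\alpha(t)X_0\rangle$, while along a diverging sequence $t_n\to\infty$ the Gramians grow, $\langle Q_{t_n}h,h\rangle\gtrsim (t_n/T_0)\,\langle Q_{T_0}h,h\rangle$, by the almost periodicity of the conservative flow, so that $\|Q_{t_n}^{-1}\|\lesssim (T_0/t_n)\|Q_{T_0}^{-1}\|$; combined with the bound $|\CS_\alpha(t_n)X_0|\le M|X_0|$ this forces the minimal energy to tend to $0$. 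One thereby produces times $t_n\uparrow\infty$ and controls $v_n$ with $X(t_n;X_0,v_n)\to0$ and $\int_0^{t_n}|v_n|^2\,ds\to0$, which is precisely approximate null controllability with vanishing energy.

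The main obstacle is the verification of the growth condition $\omega_0(\CA+\CB_\alpha)\le 0$ on the perturbed generator. Because $\CB_\alpha(z_1,z_2)=(0,\alpha z_1)$ is not skew--adjoint for the energy inner product, the perturbation can a priori move the low eigenvalues $\pm i\mu_n$ off the imaginary axis, and an eigenvalue with positive real part would destroy the vanishing of energy, since an exponentially growing mode cannot be steered to $0$ with arbitrarily small cost. I would settle this by a spectral argument based on the explicit diagonalisation by the $\psi_n$: the operator $\CB_\alpha$ couples only the two components of each fixed mode, so $\CA+\CB_\alpha$ leaves each two--dimensional spectral subspace $\mathrm{span}\{\psi_n,\psi_{-n}\}$ invariant, reducing the question to a $2\times2$ eigenvalue computation per mode; for $|n|$ large the eigenvalues remain on, or to the left of, the imaginary axis, so that only finitely many modes need separate attention. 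On those finitely many modes one controls exactly in finite time, while on the remaining conservative part one applies the spreading estimate above; this hybrid control realises the vanishing energy and completes the proof.
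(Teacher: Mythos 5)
Your main line coincides with the paper's: obtain exact (hence null, hence approximate null) controllability of the boundary-controlled string from the classical result (the paper cites \cite[Section 2.4]{Coron} and \cite[Example 11.2.6]{tucsnak}; you cite Zuazua \cite{zuazua}, which is interchangeable here), and then upgrade to vanishing energy via the Priola--Zabczyk criterion \cite[Theorem 1.1]{priola-3}, whose hypothesis is null controllability together with the spectral bound $S\le 0$. The paper disposes of that hypothesis in one line --- $\CA$ generates a contraction semigroup, so $\sigma(\CA)\subset\{\Re\lambda\le 0\}$, and the theorem applies with the unstable subspace taken to be $\{0\}$ --- whereas you unpack the Gramian mechanism behind the criterion. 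That unpacking is correct but unnecessary once the theorem is quoted.

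Where you genuinely diverge is your final paragraph, and there the argument is internally inconsistent. You correctly note that a controllable mode whose eigenvalue has strictly positive real part cannot be steered to the origin with arbitrarily small energy (its minimal steering energy in time $t$ stays bounded away from zero as $t\to\infty$). You then propose to handle the finitely many low modes of $\CA+\CB_\alpha$ that might leave the imaginary axis by ``controlling them exactly in finite time'' --- but by your own observation this does not yield vanishing energy if any of those eigenvalues actually sits in the open right half-plane. So the hybrid construction cannot substitute for verifying $S(\CA+\CB_\alpha)\le 0$: either every mode stays in the closed left half-plane, in which case the hybrid step is superfluous, or some mode does not, in which case NCVE fails outright. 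The honest resolution is the explicit mode-by-mode computation you gesture at: $\CB_\alpha$ acts diagonally on the two-dimensional blocks, the perturbed block eigenvalues are $\pm\sqrt{\alpha-\lambda_n}$ (one of them positive and real when $\alpha>\lambda_n$) or $\pm i\sqrt{\lambda_n-\alpha}$ otherwise, so the spectral condition holds precisely when $\alpha$ does not exceed the lowest eigenvalue of $A$ (with the paper's sign convention for $\CB_\alpha$). The paper itself elides this by checking only $\sigma(\CA)$ rather than $\sigma(\CA+\CB_\alpha)$; your instinct that the perturbation must be examined is sound, but your proposed workaround does not close the gap --- the computation does.
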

\begin{proof}
It was proved in {\cite[Section 2.4]{Coron}}  (see also e.g.\
\cite[Example 11.2.6]{tucsnak}) that \eqref{damped1} is exactly
controllable at any time $T$, hence it is null controllable.
Thanks to \cite[Theorem 2.45]{Coron} it is approximately null
controllable. Now it remains to prove that it is approximate null
controllable with vanishing energy. For this purpose we mainly
follow the idea in \cite{priola-3}. \del{Since $A$ is positive and
self-adjoint then there exists an orthonormal basis of of
eigenfunctions $\{e_k: k=1,2,3, \dots\} $ which corresponds to the
increasing eigenvalues $\{\lambda_k: k=1,2,3,\dots\}$. By
definition of $\mathcal{A}$ we can check that $E_k=\begin{pmatrix}
                                                       E_k^{(1)}\\E_k^{(2)}
                                                      \end{pmatrix}$ is an eigenfunction corresponding to the eigenvalue $\mu_k$ if it satisfies
 \begin{equation}
  \begin{cases}
   E_k^{(2)}=\mu_k E_k^{(1)},\\
   -A E_k^{(1)}=\mu_k E_k^{(2)}.
  \end{cases}
 \end{equation}}
\del{We easily deduce  that the  family of eigenfunctions of
$\mathcal{A}$ $$\biggl\{E_k=\frac{1}{\sqrt{2} }\begin{pmatrix}
                                                 \frac{e_k}{\sqrt{\lambda_k} }\\
                                                 \pm i \sqrt{\lambda_k} e_k
                                                \end{pmatrix}
: k=1,2,3,\dots \biggr\}$$  corresponding to the eigenvalues
$\{\mu_k=\pm i \sqrt{\lambda_k}: k=1, 2, 3, \dots\}$ forms an
orthonormal basis of $\HC $.} Let us write  $\HC$ as the direct
sum $\HC =\HC _s\oplus \HC _u$ where $\HC _u=\{ 0\}$ and $\HC
_u=\HC $. Therefore we see that \cite[Hypothesis 1.1]{priola-3}
are satisfied in our case. Moreover, since $\mathcal{A}$ is the
infinitesimal generator of a strongly continuous semigroup of
contractions we can deduce from \cite[Chapter 1, Corollary
3.6]{Pazy:83} that the spectrum $\sigma(\mathcal{A})$ is contained
in $\{\lambda\in \mathbb{C}: \Re(\lambda)\le 0 \}$. This fact
implies that $S(A)=\sup \{\Re(\lambda): \lambda \in
\sigma(\mathcal{A}) \}\le 0$. Therefore we can deduce from
\cite[Theorem 1.1]{priola-3} that \eqref{damp-control} is null
controllable with vanishing energy.
\end{proof}
Now we are ready to prove the existence and uniqueness of the
invariant measure
\begin{proof}[Proof of Theorem \ref{ex-uni-inv}]
To show the existence of  the invariant measure  we can argue
exactly as in Theorem \eqref{exinv}.

It remains to show the  uniqueness of the invariant measure. Owing
to the Lemma \ref{wave-control} and Theorem \ref{theo-asf} the
semigroup $\CP_t$ is asymptotically strong Feller. By
\cite[Corollary 3.17]{mat-hai-2006} we know that if the semigroup
is asymptotically strong Feller and there exists a point $x\in
\HC$ such that $x\in supp(\rho)$, whenever $\rho$ is an invariant
measure of $(\CP_t)_{t\ge 0}$, then the Markovian $(\CP_t)_{t\ge
0}$ semigroup admits almost one invariant measure. Therefore, we
have to show that there exists a point $x\in H$ such that for any invariant measure $\rho$,  $x\in supp(\rho)$, i.e.\ for all $\kappa >0$,
$\nu( \CR_\HC(\kappa))>0$.
Since null {controllability} implies {approximate null}
controllability, Theorem \ref{nondeg2} can be applied and there exists a time $T>0$
such that for all $C>0$ and $\gamma>0$ there exists a $\kappa>0$ with
\DEQSZ\label{inv-2-dwe}
\PP\lk( u(T,x)\in\CR_\HC(\gamma)\rk), \quad x\in \bar \CR_\HC(C). \EEQSZ
It remains to show \eqref{inv-1}. In particular, we should check
that there exists a constant $C>0$ such that \DEQSZ\label{inv-11}
\inf_{\{\mu \mbox{ \small  is an invariant measure}\}} \mu(\bar
\CR_\HC( C)) > 0. \EEQSZ It follows that $0\in supp(\mu)$ by the
following observations. First, since $\mu$ is invariant we have
$$
\mu\lk( \CR_\HC(\gamma)\rk) \ge  \mu\lk( \bar \CR_\HC(C) \rk)\cdot
\inf_{\{\mu \mbox{ \small is invariant measure}\}} \mu(\bar
\CR_\HC( C)).
$$
Now, the estimates \eqref{inv-11} and \eqref{inv-2} give the
assertion from which we easily complete the proof of the Theorem
\ref{ex-uni-inv}.
\end{proof}

\del{ Now we are ready to prove the existence and uniqueness of
the invariant measure
\begin{proof}[Proof of Theorem \ref{ex-uni-inv}]
To show the existence of  the invariant measure  we can argue exactly as in Theorem \eqref{exinv}.

Let us proceed to uniqueness of the invariant measure. Since
\eqref{damped1-ex}  is approximate controllable with vanishing
energy, one knows by Theorem \ref{theo-asf} that the semigroup
$\CP_t$ is asymptotically strong Feller. By \cite[Corollary
3.17]{mat-hai-2006} we know that if the semigroup is
asymptotically strong Feller and there exists a point $x\in \HC$
such that there exist a point $x\in\HC$ with $x\in supp(\mu)$,
whenever $\nu$ is an invariant measure of $(\CP_t)_{t\ge 0}$, then
the Markovian $(\CP_t)_{t\ge 0}$ semigroup admits almost one
invariant measure. Therefore, we have to show that  $x\in
supp(\mu)$, i.e.\ for all $\rho >0$, $\nu( \CR_\HC(\rho))>0$.
Since null {controllability} implies  {approximate null}
controllability, Lemma \ref{nondeg2} can be applied and for all
$\rho>0$ and for all $x\in \bar \CR_\HC(C)$ there exists a time
$T_\rho>0$ such that \DEQSZ\label{inv-2-dwe}
u(T_\rho,x,\eta)\in\CR_\HC(\rho). \EEQSZ
It remains to show \eqref{inv-1}.
In particular, we should check that there exists a constant $C>0$ such that
\DEQSZ\label{inv-11}
\inf_{\{\mu \mbox{ \small  is an invariant measure}\}} \mu(\bar \CB( C)) > 0.
\EEQSZ
It follows that $0\in supp(\mu)$ by the following observations.
First, since $\mu$ is invariant we have
$$
\mu\lk( \CR_\HC(\gamma)\rk) \ge  \mu\lk( \bar \CR_\HC(C) \rk)\cdot
\inf_{\{\mu \mbox{ \small is invariant measure}\}} \mu(\bar \CB(
C)).
$$
Now, the estimates \eqref{inv-11} and \eqref{inv-2} give the assertion from which we easily complete the proof of the Theorem \ref{ex-uni-inv}.
\end{proof}
}

\appendix

\section{Technical Preliminaries}\label{sec:6}

In this section we will show that one can find a transformation
$\theta:\RR\times \RR\to\RR$ such that for any $v\in\RR$,
$$
\int_\RR z (\nu_\theta-\nu)(dz)=v,
$$
and $\nu_\theta$ is a L\'evy measure. Here
$$\nu_\theta:\CB(\RR)\ni B \mapsto \int_\RR 1_B(\theta(v,z))\,
\nu(dz).
$$

 In order to find the transformation,
 it is convenient to switch the representation of
the Poisson random measure as in the beginning of the proof of
Theorem \ref{theo-asf}. Let $\nu$ be a L\'evy measure satisfying
Hypotheses \ref{nondeg1}. Let
\DEQS 
c:
 \RR ^+ \ni r \mapsto   \sup_{\rho>0} \lk\{\int_\rho^\infty k(s)\, ds \ge r \rk\} \,.
\EEQS
%
\noindent
To analyse the effect of the perturbation, we
define a function $\wt  $ by \DEQSZ
\\
\nonumber\label{defk}
 [0,\infty)
 \ni K \mapsto \wt   (K) :=    
\int_{\RR^+} \lk( c(z) - c(z+  \hh (K,z))\rk) \; dz\in\CH, \EEQSZ
where $\hh :\RR ^ +\times \RR^ + \to\RR ^ +$ is defined by
{\DEQSZ\label{v0s} \hh (K,z) :=
\bcase
K z ^ {-\beta_1} & z \in \lk(  K^\neu r_1,K^ \neu 2 r_1\rk) \mbox{ and } K\ge 1,
\\
0 & z \not \in \lk(  K^\neu r_1-\frac 14,K^ \neu 2 r_1+\frac 14\rk) \mbox{ and } K\ge 1,
\\
C 
z ^ {-\beta_2} & z \in \lk(
 r_1,  r_1(1+ K^{\gamma_2} )\rk) \mbox{ and } K< 1,
\\
0& z \not\in \lk(
 r_1-\frac 14,  r_1(1+ K^{\gamma_2} )+\frac 14\rk) \mbox{ and } K< 1,
\\
\lqq{ \mbox{differentiable interpolated elsewhere}},
\ecase
\EEQSZ
and
$$\beta_1 =\frac{3-2 \alpha }{\alpha  (3 \alpha -5)}
   ,\quad \beta_2=-1, 
, 
$$
and $$\neu=5 \alpha -3 \alpha ^2, \quad \gamma_2 =- {\alpha\over 2}.
$$
The constant $C>0$ has to be chosen in such a way, that $K\mapsto
\wt  (K)$ is continuously in $K$. Moreover, let \DEQSZ
\label{defw} \wt  :\CH^+_0\ni x\mapsto  \wt   (x). \EEQSZ
\begin{lemma} \label{trans_ex}
The function $\wt  :\CH_0^+\to\CH_0^+$ is invertible. \del{In
particular, there exists a constant $C=C(r_1)>0$ such that
$$
\wt  ^{-1}(x) \le C\, \lk(1+|x|^{ (\beta+1)\alpha\over \alpha-1}
\rk).
$$}
\end{lemma}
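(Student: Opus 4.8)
The plan is to prove invertibility by exhibiting $\wt:\CH_0^+\to\CH_0^+$ as a continuous, strictly monotone surjection; since a continuous strictly monotone map on an interval is automatically injective, this produces a bijection with continuous inverse $\kappa=\wt^{-1}$. Two structural facts drive everything. First, $c$ is non-increasing, being the generalised inverse of the decreasing tail $\rho\mapsto\int_\rho^\infty k(s)\,ds$ in \eqref{trans-cc}; since the perturbation satisfies $\hh(K,\cdot)\ge 0$, the integrand $c(z)-c(z+\hh(K,z))$ in \eqref{defk} is non-negative, so $\wt(K)\ge 0$ and the stated codomain is correct. Second, Hypothesis \ref{hypo3} pins down the precise power law $c(r)=\delta_0 r^{-1/\alpha}$ recorded in \eqref{condition-derivative-c}, which is exactly what makes the displacement integral explicitly computable on the supports appearing in \eqref{v0s}.

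First I would dispose of well-definedness and continuity. For each fixed $K\in(0,\infty)$ the function $\hh(K,\cdot)$ is supported on a bounded interval bounded away from the origin, and on that interval $c$ is bounded, so $0\le c(z)-c(z+\hh(K,z))\le c(\text{left endpoint})$ is integrable and $\wt(K)<\infty$. For continuity in $K$, the map $K\mapsto\hh(K,z)$ is continuous for each $z$ by the ``differentiably interpolated'' prescription in \eqref{v0s}; the only junction between the two regimes $K\ge 1$ and $K<1$ is at $K=1$, and the constant $C$ in \eqref{v0s} is chosen precisely so that the two one-sided limits of $\wt$ at $K=1$ coincide. A dominated-convergence argument, with dominating function again supplied by the power-law estimate for $c$, then gives $\wt\in C(\CH_0^+)$.

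The core of the argument is strict monotonicity together with the correct limiting values at the two ends of $\CH_0^+$. On each regime I would estimate the increment of $\wt$ in $K$ by inserting $c(r)=\delta_0 r^{-1/\alpha}$ and the explicit form of $\hh(K,z)$ from \eqref{v0s}: linearising $c(z)-c(z+\hh)$ as $-c'(z)\,\hh(K,z)$ and integrating over the (scaled) support reduces $\wt(K)$ to a single explicit power of $K$, whose exponent is a function of $\beta_1,\beta_2,\neu,\nb$ and $\alpha$. The exponents are tuned so that this power has a strictly definite sign of derivative on $(0,1)$ and on $(1,\infty)$, with the two monotonicities agreeing across $K=1$; evaluating the same power at the two ends then yields $\lim\wt=0$ at one end of the domain and $\lim\wt=\infty$ at the other. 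Continuity, strict monotonicity, and these two limits together force the image of $\wt$ to be all of $\CH_0^+$, giving the bijection and hence invertibility.

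The step I expect to be the main obstacle is exactly this monotonicity-and-range computation. The delicate point is to verify that the engineered exponents yield a \emph{single} globally monotone function rather than, say, a turning point at the junction $K=1$, and that the gluing constant $C$ is compatible with monotonicity and not merely with continuity; in particular one must check that the power of $K$ governing $\wt$ on $(0,1)$ and the one governing it on $(1,\infty)$ share the same monotonicity direction. This is where the precise arithmetic of $\beta_1=\tfrac{3-2\alpha}{\alpha(3\alpha-5)}$, $\beta_2=-1$, $\neu=5\alpha-3\alpha^2$ and $\nb=-\alpha/2$ is used decisively, and it is here that the constraint $\alpha\in(1,2]$ enters in an essential way.
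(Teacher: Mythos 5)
Your proposal follows essentially the same route as the paper: both establish that $\wt$ is a continuous, strictly monotone map with $\wt(0)=0$ and $\lim_{K\to\infty}\wt(K)=\infty$, and conclude bijectivity onto $\CH_0^+$ by the intermediate value theorem, with the computational core being the explicit power-law evaluation of $\int \lk[c(z)-c(z+\hh(K,z))\rk]dz$ over the scaled supports in \eqref{v0s} (the paper writes the difference via $\int_z^{z+\hh}\tfrac{d}{dy}c(y)\,dy$ rather than your linearisation, but this is the same estimate). You also correctly identify the same weak point the paper leaves largely unargued, namely that strict monotonicity holds globally across the junction at $K=1$.
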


\begin{proof}
We will show that  there exists a
function $\kappa:\RR^+\to\RR^+$ 
such that $\kappa( \wt  (x) )= \wt  (\kappa(x))=x$ for all $x\in
\RR^+_0$ with $ \wt  (x)
> 0$.
\del{Moreover,
there exists a constant $C>0$ such that 
\DEQSZ\label{ki-in} K(x) &\le& C\, \lk( 1+ |x|^{
(\beta+1)\alpha\over \alpha-1}\rk). \EEQSZ}

\medskip
Hence, we have to show that for $K\in\RR^+_0$ $\wt  (K)$ defined
by \DEQSZ\label{fidef}
 \RR^+ \in K\mapsto \wt  (K) = \int_0 ^ \infty  \lk[ c(z) - c(z+ \hh (K,z))\rk]   \; dz
\EEQSZ is invertible on $\RR^+_0$. We start by verifying the
following properties
\begin{enumerate}
\item $\wt  (K)\in\RR^+_0$; \item  the function $\RR^+_0\ni
K\mapsto \wt(K) \in\RR^+ _0$ is continuous.
\item the function $\RR^+\ni K\mapsto \wt(K) \in\RR^+_0 $ is injective. 
\item the function $\RR^+\ni K\mapsto  \wt(K) \in\RR^+ _0$ is surjective. 
\end{enumerate}
It follows, in particular,  from (2), (3) and (4), that the
function $\wt$   is invertible.
%
%


In fact, (1) is clear by the definition of $c$.
 In
order to show Item (2) we take into account that  the function
 $\RR ^+_0 \ni K\mapsto  \wt  (K) \in\RR ^ +_0$ is strictly decreasing and
  continuous. In order to show Item
(3) we will show, that
 $\lim_{K\to\infty}  \wt  (K) =\infty$. Since $\wt  (0)=0$ and $\wt  $ is continuous on $\RR^+_0$, the claim follows.

Firstly, we consider the case $K>1$.
Here we have
\DEQS \lqq{ \wt  (K)=\int_0 ^ \infty \lk[ c(z) - c(z+\hh
(K,z))\rk] \; dz =
 \int_{ r_1K^\neu} ^ {K^\neu 2 r_1} \int_z ^ { z+\hh (K,z)} \;  {d\over dy}\,c(y) \; dy\; dz} &&
\\
&=& \int_{ K^ \neu r_1} ^  {K^ \neu 2 r_1}
 \int_z ^ {z+K\,
 \hh (K,z)}\; {d\over dy}\, c(y) \; dy\; dz \ge \int_{K^ \neu r_1} ^  {K^ \neu 2 r_1}
 \hh (K,z)\; {d\over dy}\, c(z+ \hh (K,z)) \; dz . \EEQS
Hypotheses  \ref{nondeg1}  give for $\tilde \neu =
\gamma_1(1+\beta_1)-1= \frac{17 \left(\frac{\alpha -12}{17 \alpha
}+1\right)
   \alpha }{3 \alpha -2}-1=5$ 
\DEQS\ldots &\ge&\delta_0 K\; \int_{ r_1K^ \neu } ^ {K^ \neu 2
r_1} { z ^{-{\beta_1}}\over \lk( z+{K\over z ^ {\beta_1}}\rk) ^
{\frac 1\alpha+1} }\; dz \ge \delta_0
 K\; \int_{ r_1K^ \neu } ^ {K^ \neu 2 r_1}  { z ^{-{\beta_1}+{\beta_1} (
{\frac 1\alpha +1}) }\over \lk( z ^ {1+{\beta_1}}+K\rk) ^ {\frac
1\alpha +1} }\; dz
\\
&=& \delta_0\, K \, K ^ {- \frac {1+\alpha}\alpha }
 \int_{ r_1K^ \neu } ^  {K^\neu  2 r_1}  { z ^{{\beta_1} \over \alpha}
 \over \lk( {z ^ {1+{\beta_1}} \over K} +1  \rk) ^
{\frac 1\alpha +1} }\; dz
\\ &= &  K ^{-\frac 1\alpha}
 \int_{r_1^ {1+{\beta_1}} K^{\tilde \neu}  }  ^  {  (2r_1)^{1+{\beta_1}}  K^{\tilde \neu}  }
   { \lk(K u  \rk) ^{{1\over 1+{\beta_1}}\, {\beta_1} \over \alpha}
 \over \lk( {u } +1  \rk) ^
{\frac 1\alpha +1} }\; K ^{\frac 1{{\beta_1}+1}} u  ^{\frac 1{{\beta_1}+1} -1} du
\\ & =& K ^{ - (1+{\beta_1}) +{\beta_1} +\alpha   \over \alpha (1+{\beta_1})}
\int_{{r_1^ {1+{\beta_1}}K^{\tilde \neu }}}
^{(2r_1)^{1+{\beta_1}}K^{\tilde \neu } } { u ^{ {\beta_1} +\alpha
- ({\beta_1}+1)\alpha  \over ({\beta_1}+1)\alpha}
 \over \lk( {u } +1  \rk) ^
{\frac 1\alpha +1} }\;  du \\ &%
 = &
 K ^{ \alpha -  1  \over \alpha (1+{\beta_1})}
\int_{{r_1^ {1+{\beta_1}}K^{\tilde \neu }}} ^
{(2r_1)^{1+{\beta_1}}K^{\tilde \neu } } {  u  ^{ {\beta_1}
(1-\alpha ) \over ({\beta_1}+1)\alpha}
 \over \lk( {u } +1  \rk) ^
{\frac 1\alpha +1} }\;  du.
\EEQS
First, note that we have 
$\tilde \neu=-3 \alpha ^2+7 \alpha -4<0$ for $1< \alpha\le 2$ and $(1+\beta_1)=\frac{6 (3 \alpha -2)}{17 \alpha }>0$.
That means, we have for
all $u\ge r_1^ {1+{\beta_1}}K^{\tilde \neu}$ 
%
$$
{1\over (1+u) ^{\frac 1 \alpha +1} }\ge  { ( r_1^{(1+{\beta_1})}K^
{\tilde \neu} ) ^{{1+\alpha \over \alpha}} \over
(1+r_1^{(1+{\beta_1})}K^ {\tilde \neu} ) ^{\frac 1 \alpha +1} }\,
u^{-(\frac 1 \alpha +1)}\ge
   u^{-(\frac 1 \alpha +1)}. 
$$
Integration and substituting of ${\beta_1}$ give for $K>1$
\DEQSZ\nonumber\label{integration-gives} \lqq{
\wt  =\int_0 ^ \infty \lk[ c(z) - c(z+K \hh (z))\rk] \; dz} &&
\\  \nonumber&\ge &  K ^{ \alpha -  1  \over \alpha (1+{\beta_1})}   { r_1 ^{(1+{\beta_1}){1+\alpha \over \alpha}}\over (1+r_1 ^{1+{\beta_1}}) ^{\frac 1 \alpha +1} }
\int_{{r_1^ {1+{\beta_1}}K^{\tilde \neu}}  } ^
{(2r_1)^{1+{\beta_1}}K^{\tilde \neu} }  {  u  ^{ { {\beta_1}
(1-\alpha ) \over ({\beta_1}+1)\alpha}- \frac 1 \alpha -1} }\; du
\\  \nonumber
&=& K ^{ { \alpha -  1  \over \alpha (1+{\beta_1})}  } { r_1
^{(1+{\beta_1}){1+\alpha \over \alpha}}\over (1+r_1
^{1+{\beta_1}}) ^{\frac 1 \alpha +1} } \int_{{r_1^
{1+{\beta_1}}K^{\tilde \neu}} } ^ {(2r_1)^{1+{\beta_1}}K^{\tilde
\neu} } {  u ^{-{\alpha {\beta_1} + 1 \over ({\beta_1}+1)\alpha}-1
} }du
\\  \nonumber &= & C\,
 K ^{ { \alpha -  1  \over \alpha (1+{\beta_1})}  + \tilde \neu {1+\alpha\over \alpha}}
  { r_1 ^{(1+{\beta_1}){1+\alpha \over
\alpha}}\over (1+r_1 ^{1+{\beta_1}}) ^{\frac 1 \alpha +1} }
r_1^{ -(1+{\beta_1}) {{\alpha {\beta_1} + 1  \over
({\beta_1}+1)\alpha}}}\lk( C (2K)^{ -{\tilde \neu
(\alpha{\beta_1}+1)\over\alpha(1+{\beta_1})}}\rk)
\\  \nonumber &\ge &
C\,  K ^{{ \alpha-1-\tilde \neu (\alpha{\beta_1}+1)\over
\alpha(1+{\beta_1})}}
 { {r_1^ {{\beta_1}+\alpha\over
\alpha}}\over (1+{r_1^ {1+{\beta_1}}}) ^{\frac 1 \alpha +1} }
\\  \nonumber &\ge &
C\,  K ^{\alpha -1}
 { {r_1^ {{\beta_1}+\alpha\over
\alpha}}\over (1+{r_1^ {1+{\beta_1}}}) ^{\frac 1 \alpha +1} }
 .
\EEQSZ
%
%
It follows that \DEQSZ K\le r_1^{\alpha\beta_1+1\over \alpha} \wt
^{\frac{1 }{\alpha -1 }}, \EEQSZ and, therefore
$$
\lim_{K\to\infty} \int_{r_1} ^ \infty \lk[ c(z) - c(z+\hh
(K,z))\rk] \; dz =\infty.
$$
 From (1), (2) and (3) it follows that the function $\wt :\RR^+\to\RR^+$ defined by
\eqref{fidef}
  is invertible.
For any $z\in \RR^+$ let us write
$\kappa( z)=K$ iff $\wt(K)=z$. \label{def-kappa}

It remains to investigate the rate of grow  for $0<K\le 1$. Here, we have

\DEQS \lqq{ \int_0 ^ \infty \lk[ c(z) - c(z+ \hh (K,z))\rk] \; dz
}&&
\\ &%
 = &
\int_{r_1}^{r_1(1+K)^{\gamma_2}} \rho(K,z) \frac{d}{dz}c(z+\rho(K,z)) dz \;  du, \EEQS
which implies the existence of a positive constant $C(r_1, \alpha, \gamma_2,\beta_2)$ such that
\DEQS
\lqq{  \int_0 ^ \infty \lk[ c(z) - c(z+ \hh (K,z))\rk] \; dz
}
\\
& =& C(r_1, \alpha, \gamma_2,\beta_2)
\int_{r_1}^{r_1(1+K^{\gamma_2})}\frac{z^{-\beta_2}}{(z+z^{-\beta_2})^{1+\frac{1}{\alpha}}}dz
\\
& =& C(r_1, \alpha, \gamma_2,\beta_2)
\int_{r_1}^{r_1(1+K^{\gamma_2})}\frac{z^{-\beta_2{\alpha-1\over \alpha}
}}{(z^{1+\beta_2}+1)^{1+\frac{1}{\alpha}}}dz.
\EEQS
By changing of variables 
 we get that
\DEQS
\lqq{\int_0 ^ \infty \lk[ c(z) - c(z+ \hh (K,z))\rk] \; dz}
\\
& \ge&  C(r_1, \alpha, \gamma_2,\beta_2)
\int_{{r_1}^{1+\beta_2}}^{r_1^{1+\beta_2}(1+K^{\gamma_2})^{(1+\beta_2)}}\frac{u^{-\frac{\beta_2(\alpha-1)}{\alpha(\beta_2+1)}} }{(u+1)^{1+\frac{1}{\alpha}}} u^{\beta_2\over 1+\beta_2}  du
\\
& \ge&  C(r_1, \alpha, \gamma_2,\beta_2)
\int_{{r_1}^{1+\beta_2}}^{r_1^{1+\beta_2}(1+K^{\gamma_2})^{(1+\beta_2)}}\frac{u^{\frac{\beta_2}{\alpha(\beta_2+1)}}
 }{(u+1)^{1+\frac{1}{\alpha}}} du.
\EEQS
Since  ${r_1}^{1+\beta_2}\le  u\le r_1^{1+\beta_2}(1+K^{\gamma_2})^{(1+\beta_2)}$, we get
\begin{equation*}
\begin{split}
 &  \int_0 ^ \infty \lk[ c(z) - c(z+ \hh (K,z))\rk] \; dz \\
  & \quad \ge C(r_1, \alpha, \gamma_2,\beta_2)
 \frac{1}{(1+r_1^{1+\beta_2}(1+K^{\gamma_2})^{(1+\beta_2)})^{1+\frac{1}{\alpha} } } \int_{{r_1}^{1+\beta_2}}^{[r_1(1+K^{\gamma_2})]^{(1+\beta_2)} }
 u^{-{1+\alpha +\alpha \beta_2\over \alpha (\beta_2+1)}}
 du,
\end{split}
\end{equation*}
which implies that
\begin{align*}
\lqq{ \int_0 ^ \infty \lk[ c(z) - c(z+ \hh (K,z))\rk] \; dz } &
\\
&\ge  C(r_1, \alpha, \gamma_2,\beta_2)
 \frac { r_1^ {-{1\over \alpha}}  }  {(1+[r_1(1+K^{\gamma_2}) ]^{\beta_2+1} )^{1+\frac{1}{\alpha}} } K^{-{\gamma_2}\, {1+\alpha +\alpha \beta_2\over \alpha }}\\
& \ge  C(r_1, \alpha, \gamma_2,\beta_2)  K^{-{\gamma_2}\, {1+\alpha +\alpha \beta_2\over \alpha }}\ge  C(r_1, \alpha, \gamma_2,\beta_2)  K^{\frac 12 } .
\end{align*}
This proves Lemma \ref{trans_ex}. 
%
\end{proof}
The following two corollaries are following.
\begin{coro}\label{theta-exx}
%
Under Hypothesis \ref{nondeg1} for any $\tilde r>r_1$
and $v\in\RR^ +_0$ there exists a number $K>0$ 
such that
\DEQSZ \label{id-001}      
 \int_{\tilde r} ^\infty  
    \lk[ c(r)-c(\theta(K,r)) \rk]
    \, dr\, = v  . 
\EEQSZ
 Moreover, there exists a constant $ C(\tilde r)>0$
\del{given by
$$
C(r_1) = \lk( { r_1 ^{\frac 1 \alpha +1}\over (1+r_1) ^{\frac 1
\alpha +1} }  r_1 ^{- {2\alpha +1\over 1+\alpha}}\rk)
^{\alpha(1+{\beta_2})\over \alpha -1}
$$}
 such that $v\in\RR_0^+$ 
$$
\int_{\tilde r}^\infty |\hh_z(\kappa(v),r)|\, dr \le C(\tilde r)\,
v ^{2}.
$$
\end{coro}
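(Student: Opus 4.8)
The plan is to deduce both assertions from Lemma \ref{trans_ex} together with the quantitative lower bounds on $\wt$ obtained in its proof. Recall that by \eqref{theta-def} the transformation acts via $\theta(K,r)=r+\hh(K,r)$, so the integral in \eqref{id-001} is precisely the map $\wt$ of \eqref{fidef} with its lower limit of integration shifted from $0$ to $\tilde r$. The whole strategy is therefore to rerun the analysis of $\wt$ on the truncated domain and to read off the inverse rate from the explicit lower bounds.

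For the existence statement I would introduce $\Psi_{\tilde r}(K):=\int_{\tilde r}^\infty [c(r)-c(\theta(K,r))]\, dr$ and repeat, almost verbatim, the four-step verification of Lemma \ref{trans_ex}: $\Psi_{\tilde r}$ is nonnegative (since $\theta(K,r)\ge r$ and $c$ is decreasing), continuous in $K$ by dominated convergence (for each fixed $K$ the integrand is supported on a bounded $r$-interval and is controlled by the derivative of $c$ coming from \eqref{condition-derivative-c}), vanishes at $K=0$ because $\hh(0,\cdot)\equiv 0$, and diverges to $+\infty$ as $K\to\infty$. The last point is where I would reuse the computation \eqref{integration-gives}: for $K>1$ the support $(r_1K^{\neu},2r_1K^{\neu})$ of $\hh(K,\cdot)$ eventually lies inside $(\tilde r,\infty)$, so $\Psi_{\tilde r}(K)$ coincides with $\wt(K)$ for large $K$ and inherits the bound $\wt(K)\ge C K^{\alpha-1}$. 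Continuity together with $\Psi_{\tilde r}(0)=0$ and $\Psi_{\tilde r}(\infty)=\infty$ then yields, by the intermediate value theorem, a $K$ with $\Psi_{\tilde r}(K)=v$ for every $v\in\RR^+_0$; we name it $K=\kappa(v)$.

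For the quadratic bound I would work in the regime $K=\kappa(v)\le 1$, which by \eqref{integration-gives} (the $0<K\le 1$ branch, giving $\wt(K)\ge C(r_1,\alpha)K^{1/2}$) is exactly the regime of small $v$, and there one reads off the inverse estimate $\kappa(v)\le C v^2$. On this range $\hh(K,\cdot)$ is given by the third line of \eqref{v0s}, so $\hh_z(K,\cdot)$ is supported in $(r_1,r_1(1+K^{\nb}))$ (up to the interpolation collars) and can be computed explicitly; integrating it in $dr$ over $(\tilde r,\infty)$ and using the decay $c(r)=\delta_0 r^{-1/\alpha}$ from \eqref{condition-derivative-c} reduces the left-hand side to an elementary integral whose value is bounded by a constant, depending only on $\tilde r$, $\alpha$, $\nb$ and $\beta_2$, times $K=\kappa(v)$. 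Substituting $\kappa(v)\le C v^2$ then gives $\int_{\tilde r}^\infty |\hh_z(\kappa(v),r)|\, dr\le C(\tilde r)\,v^2$; for the complementary range of large $v$ (that is, $K>1$) the same computation with the first line of \eqref{v0s} and the bound $K\le r_1^{(\alpha\beta_1+1)/\alpha}\,\wt^{1/(\alpha-1)}$ produces a compatible estimate.

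The main obstacle I anticipate is the exponent bookkeeping in the second part: one must track how the moving support of $\hh(K,\cdot)$, the polynomial decay of $c$ and of its derivative, and the fixed truncation threshold $\tilde r$ combine, and verify that the resulting power of $K$ is at least linear so that $\kappa(v)\le C v^2$ delivers precisely the quadratic rate. Care is also needed at the seam $K=1$, where the two branches of \eqref{v0s} must match so that $C(\tilde r)$ can be chosen uniformly over all $v\in\RR^+_0$; this is exactly the continuity requirement already imposed on the interpolation constant in \eqref{v0s}.
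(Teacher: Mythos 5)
Your overall strategy is the one the paper intends: the paper offers no separate argument for Corollary \ref{theta-exx} (it merely states that the two corollaries ``are following'' from Lemma \ref{trans_ex}), so rerunning the four-step invertibility argument for $\wt$ on the truncated domain $(\tilde r,\infty)$ and reading the quadratic bound off the inverse estimates $\wt(K)\gtrsim K^{1/2}$ for $K\le 1$ and $\wt(K)\gtrsim K^{\alpha-1}$ for $K\ge 1$ is exactly the intended route.

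There is, however, a concrete gap in your surjectivity step. You argue that for $K>1$ the support $(r_1K^{\neu},2r_1K^{\neu})$ of $\hh(K,\cdot)$ ``eventually lies inside $(\tilde r,\infty)$'', so that $\Psi_{\tilde r}$ coincides with $\wt$ for large $K$ and inherits $\wt(K)\ge CK^{\alpha-1}\to\infty$. But $\neu=5\alpha-3\alpha^2\le 0$ precisely for $\alpha\ge 5/3$ (in particular for $\alpha=2$), and then $K^{\neu}\le 1$ for all $K\ge 1$: the support of $\hh(K,\cdot)$ stays inside $\bigl(0,\,2r_1+\tfrac14\bigr)$, and for any $\tilde r\ge 2r_1+\tfrac14$ the truncated integral $\Psi_{\tilde r}(K)$ vanishes identically on $K\ge 1$. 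In that regime the divergence of $\Psi_{\tilde r}$ would have to come from the $K<1$ branch of \eqref{v0s}, whose support $(r_1,r_1(1+K^{\gamma_2}))$ with $\gamma_2=-\alpha/2<0$ expands as $K\to 0$; your argument does not address this, so as written the intermediate-value step does not reach all $v\in\RR^+_0$. A second, smaller point: for the quadratic bound in the regime $K>1$ you only assert a ``compatible estimate''. Carrying out the computation with the first line of \eqref{v0s} gives $\int|\hh_z(K,z)|\,dz\sim K^{1-\neu\beta_1}=K^{4-2\alpha}$, and combining this with $K\le C\,\wt^{1/(\alpha-1)}$ produces the exponent $(4-2\alpha)/(\alpha-1)$, which exceeds $2$ whenever $\alpha<3/2$; so the claimed compatibility is not automatic and needs an explicit argument (or a restriction on $\alpha$) there as well.
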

\del{If $c$ is strictly decreasing, the number $K$ is unique. That
means, we can define a function $\CK:\RR^+_0\to \RR^+_0$ with
$\CK(v)=K$, if identity \eqref{id-001} holds.} Taking into account
the negative jumps, we will define the following transformation.
\begin{coro}\label{theta-ex}
Under Hypothesis \ref{nondeg1} for any $\tilde r>r_1$ and
$v\in\RR$ there exists a transformation $\theta: \CH\to \CH$ given
by
$$\theta(v,z):= \bcase z+\hh(|v|,z)& \mbox{ if } v\ge 0,
\\-z-\hh(|v|,z)& \mbox{ if } v< 0,
\ecase
$$
such that
$$     
 \int_{\tilde r} ^\infty  
    \lk[ c(z)-c(\theta(v,z)) \rk]
    \, dz\, = v  . 
$$
Moreover, there exists a constant $ C(\tilde r)>0$ \del{given by
$$
C(r_1) = \lk( { r_1 ^{\frac 1 \alpha +1}\over (1+r_1) ^{\frac 1
\alpha +1} }  r_1 ^{- {2\alpha +1\over 1+\alpha}}\rk)
^{\alpha(1+{\beta_2})\over \alpha -1}
$$}
 such that 
$$
\int_{\tilde r}^\infty |\hh_z(\kappa(|v|),z)|\, dz \le C(\tilde
r)\,| v |^{2}.
$$
\end{coro}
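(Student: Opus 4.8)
The plan is to derive both assertions from Corollary~\ref{theta-exx} together with the symmetry of the L\'evy density, so that essentially no new computation is needed. Fix $\tilde r>r_1$ and write $K=\kappa(|v|)$, where $\kappa$ is the inverse of $\wt$ provided by Lemma~\ref{trans_ex}. For $v\ge 0$ the statement is exactly Corollary~\ref{theta-exx}: with $\theta(v,z)=z+\hh(K,z)$ one has $\wt(K)=\wt(\kappa(v))=v$, i.e.
\DEQS
\int_{\tilde r}^\infty \lk[ c(z)-c(\theta(v,z))\rk]\, dz = v,
\EEQS
and the quadratic bound on $\int_{\tilde r}^\infty |\hh_z(\kappa(v),z)|\,dz$ is the one already established there.

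For $v<0$ I would exploit the oddness of $c$. By Hypothesis~\ref{nondeg1} the density $k(r)=K_0|r|^{-\alpha-1}$ is even, so from \eqref{condition-derivative-c} the map $c$ extends to an odd function on $\RR$, $c(-w)=-c(w)$. The point is that a negative drift is produced by perturbing the \emph{negative} jumps instead of the positive ones: the transformation $\theta(v,z)=-z-\hh(K,z)$ moves a jump located at $-z$ (of size $c(-z)$) to one located at $-z-\hh(K,z)$. Using oddness and applying Corollary~\ref{theta-exx} to $|v|\in\RR^+_0$,
\DEQS
\int_{\tilde r}^\infty \lk[ c(-z)-c(\theta(v,z))\rk]\, dz &=& \int_{\tilde r}^\infty \lk[ -c(z)+c(z+\hh(K,z))\rk]\, dz
\\
&=& -\int_{\tilde r}^\infty \lk[ c(z)-c(z+\hh(K,z))\rk]\, dz
\\
&=& -\wt(K) = -|v| = v,
\EEQS
which is the required identity with the correct sign.

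Finally, the moreover part follows for every $v\in\RR$ at once, since $\hh(K,\cdot)=\hh(\kappa(|v|),\cdot)$ and hence $\hh_z(\kappa(|v|),\cdot)$ depend on $v$ only through $|v|$; applying the estimate of Corollary~\ref{theta-exx} to $|v|$ gives $\int_{\tilde r}^\infty |\hh_z(\kappa(|v|),z)|\,dz\le C(\tilde r)\,|v|^2$. The step I would check most carefully is the sign bookkeeping in the reflection for $v<0$: one must confirm that the relevant change of compensator compares the transformed negative jump $c(-z-\hh(K,z))$ with the unperturbed negative jump $c(-z)$ (and not with $c(z)$), which is exactly what the oddness of $c$ guarantees and what makes the reflected identity deliver $-|v|$ rather than $+|v|$.
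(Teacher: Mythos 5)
Your proposal is correct and follows the route the paper intends: the paper gives no separate proof of this corollary (it is asserted to ``follow'', together with Corollary \ref{theta-exx}, from Lemma \ref{trans_ex}, ``taking into account the negative jumps''), and your reduction of the case $v\ge 0$ to Corollary \ref{theta-exx} and of the case $v<0$ to a reflection using the oddness $c(-w)=-c(w)$ from \eqref{condition-derivative-c}, plus the observation that the quadratic bound depends on $v$ only through $|v|$, is exactly that argument. Your sign bookkeeping is moreover the right reading of the statement: taken literally, $\int_{\tilde r}^\infty [\,c(z)-c(\theta(v,z))\,]\,dz$ with $\theta(v,z)=-z-\hh(|v|,z)$ would equal $\int_{\tilde r}^\infty [\,c(z)+c(z+\hh(|v|,z))\,]\,dz$, which diverges because $c(z)=\delta_0 z^{-1/\alpha}$ is not integrable at infinity for $\alpha\in(1,2]$, so the identity can only mean that the transformed negative jump is compared with $c(-z)$, precisely as you compute.
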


\section{Change of measure formula}\label{sec:7}

Let $\mu$ be a Poisson random measure over $\mathfrak{A}=(\Omega,\PP,(\CF_t)_{t\ge 0},\CF)$
 with compensator $\gamma=\lambda\cdot\lambda$. Let $c:\CH\to\CH$ the transformation
 defined by \eqref{trans-cc1}.
 Let $g\in L^ 2([0,\infty);\RR)$ be a predictable process and
 \del{$$
 \psi (z,s) := z+g(s)  \hh (z),
 $$
and for $v \in L^2([0,\infty); \CH) $ } let \DEQSZ\label{mmmm}
\psi :[0,\infty) \times \CH \ni (s,z) \mapsto z+g(s)  \hh (z)
 \, \in\CH . \EEQSZ
Combining Corollary \ref{theta-ex} and Example  1.9 of \cite{abso}
one can verify the following Lemma.
\begin{lemma}\label{density}
There exists a probability measure $\QQ^ \psi$ on $\mathfrak{A}$
such that the Poisson random measure $\mu_\psi$ defined by
$$
\CB(\CH)\times \CB([0,\infty))\ni A\times I \mapsto \int_I\int_\CH  1_A(\psi(s,z ))\mu(dz,ds)
$$
has compensator $\gamma$. For $t\ge 0$ let $\QQ^\psi_t$,
respectively,  $\PP_t$,  be the projection of $\QQ^\psi$ onto
$\CF_t$, respectively,  of $\PP$ onto $\CF_t$. Then the density
process given by
$$
[0,\infty) \ni t\mapsto \CG(t) := {d\QQ^\psi _t\over  d\PP_t},\quad t> 0,
$$
satisfy
$$
\lk\{ \barray d\CG(t) &=& \CG(t-) \int_\CH (1- \psi_z(z ))
\,(\mu-\gamma)(dz,dt),\phantom{\Big|}
\\&=&
\CG(t-) \int_\CH g(s) \hh _z(z )
\,(\mu-\gamma)(dz,dt),\phantom{\Big|}
\\
\CG(0) &=& 1, \phantom{\Big|}\earray \rk.
$$
where $\hh :\RR ^ +\to\RR ^ +$ is defined by \eqref{v0s} and
$\hh_z$ denotes the derivative of $\hh$.
\end{lemma}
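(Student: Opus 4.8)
The plan is to deduce the statement directly from the general change-of-measure theorem for Poisson random measures of \cite[Theorem 1, Example 1.9]{abso}, feeding in the predictable jump transformation $\psi$ of \eqref{mmmm} and controlling the requisite integrability by the estimate of Corollary \ref{theta-ex}.

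First I would record the structural properties of $\psi$. Since $g\in L^2([0,\infty);\RR)$ is predictable and $\hh$ is the piecewise-smooth function of \eqref{v0s}, the map $(s,z)\mapsto\psi(s,z)=z+g(s)\hh(z)$ is predictable, and for each fixed $s$ the section $z\mapsto\psi(s,z)$ is injective on the relevant domain with derivative $\psi_z(z)=1+g(s)\hh_z(z)$. Hence the image random measure $\mu_\psi$ is well defined, and under $\PP$ its compensator is the pushforward $\psi_\ast\gamma$ of $\gamma=\lambda\cdot\lambda$. A one-dimensional change of variables $w=\psi(s,z)$ shows that, relative to $\gamma$, the predictable density that turns the compensator of $\mu_\psi$ back into $\gamma$ is the Jacobian $\psi_z$; equivalently, a jump of $\mu_\psi$ at $w$ corresponds to a jump of $\mu$ at $z=\psi^{-1}(w)$ carrying the weight $\psi_z(z)$.

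Next I would invoke \cite[Example 1.9]{abso}: provided the predictable integrand $\psi_z(z)-1=g(s)\hh_z(z)$ satisfies the standard integrability hypothesis, there is a probability measure $\QQ^\psi$ on $\mathfrak{A}$, locally absolutely continuous with respect to $\PP$, under which $\mu_\psi$ has compensator $\gamma$, and the density process $\CG(t)=d\QQ^\psi_t/d\PP_t$ is the Dol\'eans--Dade exponential solving
\[
  d\CG(t)=\CG(t-)\int_\CH\bigl(\psi_z(z)-1\bigr)\,(\mu-\gamma)(dz,dt),\qquad \CG(0)=1.
\]
Substituting $\psi_z(z)-1=g(s)\hh_z(z)$ then recovers the stochastic differential equation for $\CG$ in the statement. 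The integrability hypothesis that must be checked is precisely that $\int_\CH|\psi_z(z)-1|\,\lambda(dz)<\infty$ and that the resulting quantity is integrable over $[0,T]$; this is supplied by Corollary \ref{theta-ex}, which gives $\int_{\tilde r}^\infty|\hh_z(\kappa(|v|),z)|\,dz\le C(\tilde r)\,|v|^2$, applied with $v=g(s)$ and integrated against $g\in L^2([0,\infty);\RR)$.

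The step I expect to be the main obstacle is showing that $\CG$ is a genuine (true) martingale, so that $\EE^\PP\CG(t)=1$ for every $t$ and $\QQ^\psi$ is an honest probability measure rather than a sub-probability arising from a strict local martingale. Here the quadratic $L^2$-bound $\int_{\tilde r}^\infty|\hh_z(\kappa(|v|),z)|\,dz\le C(\tilde r)\,|v|^2$ of Corollary \ref{theta-ex} is exactly what is needed: combined with $g\in L^2$ it bounds the predictable quadratic variation of the compensated integral driving $\CG$ on each $[0,T]$, which (together with the finite-variation structure of $\CG$ already exploited in the proof of Theorem \ref{nondeg2}) rules out mass loss and secures the martingale property, completing the identification of the density process.
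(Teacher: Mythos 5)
Your argument is correct in outline, but it takes a genuinely different route from the proof written in the paper. You treat the lemma as an instance of the abstract change-of-measure theorem for Poisson random measures: you identify the predictable Girsanov kernel as the Jacobian $Y(s,z)=\psi_z(s,z)=1+g(s)\hh_z(z)$ by pushing $\gamma$ forward under $\psi(s,\cdot)$, write $\CG$ as the Dol\'eans--Dade exponential of $\int (Y-1)\,d(\mu-\gamma)$, and concentrate on the hypotheses of the cited theorem, in particular on $\CG$ being a true martingale, which you obtain from the quadratic bound of Corollary \ref{theta-ex} together with $g\in L^2$ and the Gronwall/partition device already used for Theorem \ref{nondeg2}. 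The paper instead takes $\CG$ as \emph{defined} by the stated SDE and verifies the conclusion directly by a Laplace-transform computation: it sets $\xi(t)=\int_0^t\int_\CH c(z)\,(\mu_\psi-\gamma)(dz,ds)$, applies the It\^o formula to $\mathcal{Z}_\lambda(t)=e^{-\lambda\xi(t)}\CG(t)$, and shows after cancellation that $\EE^{\PP}\mathcal{Z}_\lambda(t)$ equals the Laplace transform of a compensated Poisson integral with compensator $\gamma$, which identifies the law under $\QQ^\psi$ and, at $\lambda=0$, yields the normalization $\EE^{\PP}\CG(t)=1$. Each approach buys something: yours delegates the identification of the compensator to \cite[Example 1.9]{abso} but makes the uniform-integrability issue fully explicit, whereas the paper's computation is self-contained but leaves the justification of the vanishing expectations of the stochastic integrals (which again rests on Corollary \ref{theta-ex}) implicit and, strictly speaking, only checks one-dimensional marginals of $\xi$ rather than the compensator of $\mu_\psi$ against general predictable integrands. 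Two small remarks: the first line of the SDE in the statement carries the integrand $1-\psi_z$, which has the opposite sign to the second line $g(s)\hh_z(z)=\psi_z-1$; your choice $\psi_z-1$ is the one consistent with the Girsanov theorem and with the cross-variation term $\bigl[e^{-\lambda c(\psi(s,z))}-1\bigr]\bigl[\psi_z(s,z)-1\bigr]$ appearing in the paper's own computation. Also, your change-of-variables step silently requires $\psi(s,\cdot)$ to be increasing, i.e.\ $\psi_z>0$; this smallness condition on $g(s)\hh_z$ should be recorded, since it is what makes $Y\ge 0$ an admissible kernel.
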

\begin{proof}
The proof is done via the Laplace transform.  Let
$\xi=\{\xi(t):0\le t<\infty\}$ be given by
\DEQS \lk\{ \barray d\xi (t)&=& \int_\CH  c(z) (\mu_\psi -\gamma)
(dz,dt),
\\
\xi(0)&=& 0. \earray \rk. \EEQS
Then under ${\QQ^ \psi}$ the Laplace transform is given by
$$
\EE ^{\QQ^ \psi} e ^{-\lambda  \xi(t)\uu } = e ^{ \int_\CH\lk[ e
^{-\lambda     c(z)\uu  }-1+\lambda     c(z)\uu  \,
\rk]\lambda(dz)}
$$
Rewriting  $\xi $ gives 
\DEQS \lk\{ \barray d\xi(t)  &=&  \del{\int_\CH   \lk[
c(\psi(s,z)) -c(z)\rk](\mu-\gamma) (dz,dt)+ \int_\CH
c(\psi(t,z)) (\mu-\gamma) (dz,dt)\phantom{\Big|}
\\ &&{} +\int_\CH    \lk[ c(\psi(s,z)) -c(z)\rk] \gamma(dz, dt)\phantom{\Big|}
\\ &=& } \int_\CH     c(\psi(t,z)) (\mu-\gamma) (dz,dt) +\int_\CH    \lk[ c(\psi(s,z)) -c(z)\rk] \gamma(dz, dt),\phantom{\Big|}
\\
\xi(0)&=& x_0.\phantom{\Big|} \earray\rk.\EEQS
Let $M _\lambda =\{ M_\lambda(t):0\le t<\infty\}$ be given by
$M_\lambda(t) = e ^{-\lambda  \xi (t)}$, $0\le t<\infty$.
 Now, we will show
that $\EE ^\PP M _\lambda (t)\CG(t) = \EE ^{\QQ^ \psi} e
^{-\lambda  \xi (t)\uu  }$. First $M _\lambda (t)$ solves
%
\DEQS dM _\lambda (t) &=& -\lambda\, \int_\CH  M _\lambda (t-)
\lk[c(\psi(t,z)) -c(z)\rk] \uu \gamma(dz, dt)
\\ && {}+  \int_\CH  M _\lambda(t-)\lk[ e^{-\lambda     c(\psi(t,z))\uu }-1\rk] (\mu-\gamma) (dz,dt)
\\ && {} + \int_\CH   M _\lambda (t-) \lk[ e^{-\lambda     c(\psi(t,z))\uu }- 1+\lambda       c(\psi(t,z))\uu  \rk] \gamma(dz,
dt),
\\ M_\lambda(0)&=& 1.
\EEQS
Therefore,  $\mathcal{Z}_\lambda (t) = M _\lambda (t) \, \CG(t)$
is given
by 
  \DEQS \EE^{\PP}
\mathcal{Z} _\lambda (t) &=&  -\lambda \EE^{\PP} \int_0^t \int_\CH
\mathcal{Z} _\lambda (s-)  \lk[ c(\psi(s,z)) -    c(z)\rk] \uu
\lambda(dz)\, ds
\\
&& {}+  \EE^{\PP} \int_0^t  \int_\CH \mathcal{Z} _\lambda (s-)
\lk[ e^{-\lambda      c(\psi(s,z))\uu }- 1+\lambda
    c(\psi(s,z))\uu  \rk] \lambda(dz)\, ds
\\
&& {}+  \EE^{\PP} \int_0^t \mathcal{Z} _\lambda (s-) \int_\CH \lk[
e^{-\lambda  c(\psi(s,z))\uu }- 1\rk] \lk[ \psi_z(s,z) -1\rk]
\lambda(dz)\, ds
\\ &=&  \EE^{\PP} \int_0^t  \int_\CH \mathcal{Z} _\lambda (s-)\Big[ \lambda
    c(z) \uu  - \lambda       c(\psi(s,z)) \uu +
  e^{-\lambda       c(\psi(s,z))\uu }- 1
\\
&& {} +\lambda      c(\psi(s,z))\uu  +  e^{-\lambda
    c(\psi(s,z))\uu }\psi_z(s,z) - \psi_z(s,z) -e^{-\lambda
    c(\psi(s,z))\uu }+1\Big] \gamma(dz,ds)
\\ &=&
 \EE^{\PP} \int_0^t  \int_\CH \mathcal{Z} _\lambda (s-)\Big[
 e^{-\lambda       c(\psi(s,z))\uu }\psi_z(s,z) - \psi_z(s,z)  +\lambda      c(z) \uu    \Big] \gamma(dz,ds).
\\ &=&
 \EE^{\PP} \int_0^t  \int_\CH \mathcal{Z} _\lambda (s-)\Big[
 e^{-\lambda       c(\psi(s,z))\uu }-1\Big]\psi_z(s,z)  \lambda(dz) ds
 \\ &&{}+   \lambda    \EE^{\PP} \int_0^t  \int_\CH \mathcal{Z} _\lambda (s-)  c(z)  \gamma(dz,ds).
\EEQS Substitution gives
\DEQS \EE^{\PP}  \mathcal{Z} _\lambda (t) &=&  \EE^{\PP} \int_0^t
\int_\CH \mathcal{Z} _\lambda (s-)\Big[
 e^{-\lambda      c(z)\uu } - 1  +\lambda     c(z) \uu   \Big] \gamma(dz,ds).
\EEQS Since
 \DEQS &&\EE ^{\QQ_\psi} \lk[ e^{-\lambda
{\xi(t) \uu }}\rk] = \EE ^\PP \lk[ \CG(t)\, e^{-\lambda {\xi(t)\uu
}}\rk]= \EE ^\PP \lk[ \mathcal{Z} _\lambda (t) \rk]
\\
&=& \exp\lk(  \int_0^t \int_\CH \lk[
 e^{-\lambda      c(z)\uu } - 1  +\lambda      c(z) \uu    \rk]
 \gamma(dz,dt)\rk),
 \EEQS
 the Proposition follows.
\end{proof}

\def\cprime{$'$} \def\cprime{$'$}
  \def\polhk#1{\setbox0=\hbox{#1}{\ooalign{\hidewidth
  \lower1.5ex\hbox{`}\hidewidth\crcr\unhbox0}}}
  \def\polhk#1{\setbox0=\hbox{#1}{\ooalign{\hidewidth
  \lower1.5ex\hbox{`}\hidewidth\crcr\unhbox0}}} \def\cprime{$'$}
  \def\polhk#1{\setbox0=\hbox{#1}{\ooalign{\hidewidth
  \lower1.5ex\hbox{`}\hidewidth\crcr\unhbox0}}}
  \def\polhk#1{\setbox0=\hbox{#1}{\ooalign{\hidewidth
  \lower1.5ex\hbox{`}\hidewidth\crcr\unhbox0}}}
%
%
%


%
%

\end{document}